\documentclass[10 pt]{amsart}
\usepackage{graphicx}
\usepackage{tikz}
\usepackage{tikz-cd}
\usepackage{float}
\usepackage{tabularx}
\usepackage{mathtools}
\usepackage{ amsbsy} 
\usepackage{amsmath}
\usepackage{amssymb}
\usepackage[utf8]{inputenc}
\usepackage{graphicx}%
\usepackage{multirow}%
\usepackage{amsmath,amssymb,amsfonts}%
\usepackage{amsthm}\linespread{1.35}
\usepackage{mathrsfs}%
\usepackage[title]{appendix}%
\usepackage{xcolor}%
\usepackage{textcomp}%
\usepackage{manyfoot}%
\usepackage{booktabs}%
\usepackage{listings}
\usepackage{amssymb}
\usepackage{amsfonts}
\usepackage{amsmath}
\usepackage{epsfig}
\usepackage{indentfirst}
\usepackage[usenames,dvipsnames]{pstricks}
\usepackage{pst-grad}
\usepackage{pst-plot}
\usepackage[margin=1.0in]{geometry}
\usepackage{graphicx}
\usepackage{tikz}
\usepackage{tikz-cd}
\usepackage{float}
\usepackage{tabularx}
\usepackage{mathtools}
\usepackage{todonotes}


%
%
 \newtheorem{theorem}{Theorem}[section]
 \newtheorem{corollary}[theorem]{Corollary}
 \newtheorem{lemma}[theorem]{Lemma}
 \newtheorem{proposition}[theorem]{Proposition}
 \theoremstyle{definition}
 \newtheorem{definition}[theorem]{Definition}
 \theoremstyle{remark}
 \newtheorem{remark}[theorem]{Remark}
 
 \numberwithin{equation}{section}

\DeclareMathOperator {\Hom}{Hom}
\DeclareMathOperator {\End}{End}
\DeclareMathOperator {\Ind}{Ind}
\DeclareMathOperator {\ind}{ind}
\DeclareMathOperator {\Res}{Res}

\DeclareMathOperator {\Ext}{Ext}
\DeclareMathOperator {\Stab}{Stab}
\DeclareMathOperator {\Gl}{GL}
\DeclareMathOperator {\arrow}{arrow}

\DeclareMathOperator{\sgn}{sgn}

\newcommand{\Z}{\mathbb{Z}}

\newcommand{\B}{\mathcal{B}}
\newcommand{\W}{\mathcal{W}}

\newcommand{\Sg}{\mathfrak{S}}
\newcommand{\h}{\mathcal{H}}
\newcommand{\blam}{\pmb{\lambda}}
\newcommand{\bmu}{\pmb{\mu}}
\newcommand{\banu}{\pmb{\nu}}
\newcommand{\bal}{\pmb{\alpha}}

\newcommand{\bosig}{\pmb{\sigma}}
\newcommand{\bolam}{\pmb{\lambda}}
\newcommand{\bomu}{\pmb{\mu}}
\newcommand{\bonu}{\pmb{\nu}}

\newcommand{\Lam}{\pmb{\Lambda}}
\newcommand{\ta}{\mathfrak{t}}
\newcommand{\tab}{\pmb{\mathfrak{t}}}
\newcommand{\F}{\mathcal{F}}

\newcommand{\ch}{\mathrm{char}}

\begin{document}

\renewcommand{\baselinestretch}{1.1}
\title[Construction of Young modules and filtration multiplicities for Brauer algebras of type $C$] {Construction of Young modules and filtration multiplicities for Brauer algebras of type $C$}

\author{Sulakhana Chowdhury}\address{Indian Institute of Science Education and Research Thiruvananthapuram, Thiruvananthapuram, \newline
Email: sulakhana17@iisertvm.ac.in}
\author{Geetha Thangavelu}\address{Indian Institute of Science Education and Research Thiruvananthapuram, Thiruvananthapuram, \newline
Email: tgeetha@iisertvm.ac.in}
\subjclass{20C30, 20F36, 16D40, 16D90, 05E10}

\keywords{Brauer algebras of type $C$, cellular algebras, filtration multiplicity, hyperoctahedral groups, permutation modules.}

\begin{abstract}
 In this paper, we construct the permutation modules and Young modules for Brauer algebras of type $C$ by extending the representation theory of the group algebra of hyperoctahedral groups. Additionally, we develop a stratifying system for Brauer algebras of type $C$, thereby extending the work  of Hemmer-Nakano in \cite{HN} on Hecke algebras. This framework allows us to determine when the multiplicities of cell modules in any filtration are well-defined. As a result, we prove that if the characteristic of the field is neither $2$ nor $3$, then every permutation module of the Brauer algebra of type $C$ decomposes into a direct sum of indecomposable Young modules. We also establish certain cohomological criteria for the group algebra of the hyperoctahedral groups, which are necessary to prove the results for the Brauer algebras of type $C$.
\end{abstract}

\maketitle

\section{Introduction}
 Schur-Weyl duality is a fundamental result in representation theory that links the representations of algebraic groups with those of finite-dimensional algebras. Over an algebraically closed field $K$ of characteristic $0$, Schur \cite{Schur1927} showed that the diagonal action of the general linear group $\Gl_n$ on the tensor space $V^{\otimes r}$, where $\dim V=n$, commutes with the place-permutation action of the symmetric group $\Sg_r$. When this action of $\Gl_n$ is restricted to the orthogonal group, the Brauer algebra $\B_{r}(\delta)$ naturally appears as the corresponding centralizer  algebra on the other side of the duality. In recent years, there has been growing interest in studying various types of Brauer algebras, such as walled Brauer algebras, Brauer algebras of type $C$, cyclotomic Brauer algebras, and $A$-Brauer algebras, among others. 
 
 In this article, we focus on the representation theory of Brauer algebras of type $C$, denoted by $\B(C_r,\delta)$, where $\delta$ is a distinguished element in $K$. It is well known that the Coxeter group of type $C_r$ can be derived from the Coxeter group of type $A_{2r-1}$ as the subgroup consisting of elements fixed by certain Dynkin diagram automorphism.
 In \cite{CLY}, Cohen, Liu, and Yu introduced the Brauer algebra of type $C$ as a subalgebra of $\B_{2r}(\delta)$, generated by Brauer diagrams that are invariant under this automorphism. We refer to these diagrams as \textit{symmetric diagrams}. 
 
 The representation theory of $\B(C_r, \delta)$ is not yet fully understood. In particular, whether $\B(C_r, \delta)$ satisfies the double centralizer property remains an open question. In this article, an analogous question is addressed in the modular setting: when the tensor space is replaced by permutation modules of $\B(C_r, \delta)$, does there exist a family of modules that provides all the indecomposable summands of these permutation modules over a field of characteristic $p$?
 
 The permutation modules and Specht modules of symmetric group are discussed in \cite{Grb} and \cite{JaB}.  In \cite{Ja}, James proved the decomposition of permutation modules of symmetric groups into a direct sum of indecomposable Young modules, while Hartman and Paget carried out a similar analysis for Brauer algebras in \cite{HP}. Later, Paul extended these results to cellularly stratified algebras, whose  input algebras are isomorphic to group algebra of symmetric groups or their Hecke algebras, and obtained an analogous decomposition for partition algebras in \cite{In}. Recently, the authors of this article proved a similar type of decomposition for walled Brauer algebras in \cite{CGwalled}. In this article, our aim is to generalize these results to the case of $\B(C_r, \delta)$. A key difficulty in this setting is that $\B(C_r,\delta)$ contains the group algebra of hyperoctahedral groups, denoted by $\W_r$, as a subalgebra, which has a more complex structure than the group algebra of symmetric groups and their cohomological properties are not yet fully understood.

In \cite{HN}, Hemmer and Nakano made a significant contribution to the theory of Specht modules by proving that if the underlying field is algebraically closed, and its characteristic is neither $2$ nor $3$, then the multiplicities of Specht modules in any filtration remain the same. The cohomological results established in this article provide a new instance of this phenomenon. In particular, we present analogous results for $\W_r$ in Section \ref{Vanishing cohomology on hyperoctahedral group for hyperoctahedral group}, and for $\B(C_r, \delta)$ in Section \ref{Cell filtration multiplicity for type C}. This leads to the structural main result, where we show that the permutation modules of $\B(C_r, \delta)$ decompose into a direct sum of indecomposable Young modules, provided that the characteristic of the field is neither $2$ nor $3$. 
 
The contents of this article are organized as follows. In Section \ref{Representation theory of hyperoctahedral group for type C}, we begin by reviewing the construction of the permutation modules, Specht modules, and Young modules of $\W_r$. In Corollary \ref{dual Specht module is isomorphic to tensor product of two specht in any field for hyperoctahedral group}, we observe that the dual Specht module of $\W_r$ exhibits a property analogous to that found in group algebra of symmetric groups. Furthermore, in Section \ref{Vanishing cohomology on hyperoctahedral group for hyperoctahedral group}, we establish criteria for the non-vanishing cohomology of both Specht, and dual Specht modules for $\W_r$, using the Morita equivalence of $\W_r$ from \cite{DJ92}. Section \ref{Brauer algebras of type C for type C} provides the basic definitions and the cellular structure of $\B(C_r,\delta)$. In Section \ref{Construction of Young module for type C}, we construct the Young modules of $\B(C_r,\delta),$ and show that they possess properties similar to those of Brauer algebras. 

In the Section \ref{Cell filtration multiplicity for type C}, we present the cohomological results of $\B(C_r,\delta)$. In particular, we prove in Lemma \ref{Hom between two cell modules vanishing condition for type C} and Lemma \ref{Ext between two cell modules vanishing condition for type C} that the cohomology of the cell modules for $\B(C_r,\delta)$ is non-vanishing, provided that the characteristic of the field is neither $2$ nor $3$. Consequently, we show that the permutation modules and their direct summands admit a cell filtration (Theorem \ref{cell filtration of permutation module for type C}), and are relatively projective (Corollary \ref{relative projective of the permutation module and its direct summand for type C}) in the category of $\B(C_r,\delta)$-modules that possess a cell filtration. Finally, in Section \ref{Main theorem for type C}, we prove our main result in Theorem \ref{Decomposition of the permutation module for Brauer algebras of type C for type C}, which establishes the decomposition of permutation modules of $\B(C_r,\delta)$ into a direct sum of indecomposable Young modules, provided that the characteristic of the field is neither $2$ nor $3$.


\section{Representation theory of hyperoctahedral group} \label{Representation theory of hyperoctahedral group for type C}

For a natural number $r$, let $W_r$ denote the hyperoctahedral group, also known as the Weyl group of type $B_r$, which has several equivalent descriptions:

\begin{itemize}
    \item[1.]$W_r$ is generated by elements $s_0, s_1, \cdots, s_{r-1}$, subject to the relations: $s_k^2=1$ and $(s_is_j)^{m_{ij}}=1$, where $0 \leq k \leq r-1$, $m_{01}=4$, $m_{ij}=2$ if $|i-j|\geq 2$, and $m_{ij}=3$ if $j=i+1$ for $0<i <r-1$.
    \item[2.] $W_r$ is isomorphic to the wreath product $\Z/2\Z\wr \Sg_r$, where $\Sg_r$ is the symmetric group on $r$ elements. 
    \item[3.] $W_r$ is the group of all orthogonal transformations on $\mathbb{R}^r$ that permute and change signs  of elements in an orthonormal basis.
\end{itemize}
Thus, any element $\bosig \in W_r$ can be expressed as $
   \bigl(\begin{smallmatrix}
    -r & -(r-1) & \cdots & -1 & 1 & \cdots & (r-1) & r \\
   -i_1 & -i_2 & \cdots & -i_r & i_r & \cdots & i_2 & i_1 
  \end{smallmatrix}\bigr).$
For simplicity, we write $$\bosig = (-i_1,\cdots, -i_r,i_r, \cdots, i_1).$$ Alternatively, $\bosig$ can be written as  $\bosig = \Big(\overset{r}{\underset{i=1}{\prod}} \xi_i \Big) \sigma$, where $\sigma \in \Sg_r$ and each $\xi_i$ is defined by
\[
\xi_i(j)=\begin{cases}\pm i &\text{ if }j=\mp i,\\ j &\text{  otherwise.}\end{cases}
\]
 
Let $t_1=s_0$ and for $2\leq i \leq r$, define $t_i=s_{i-1}t_{i-1}s_{i-1}$. It follows that $t_i^2=1$ and $t_it_j=t_jt_i$  if $|i-j|>2$. The subgroup generated by the elements $t_i$, is a normal subgroup isomorphic to $(\Z/2\Z)^r$. Clearly, $W_r/(\Z/2\Z)^r\cong \Sg_r$. By [\cite{GrHyperoctahedral}, Lemma 1.1.1], the group $W_r$ can be embedded into the symmetric group $\Sg_{2r}$. One can visualize elements of $W_r$ as a two-row diagram with $2r$ vertices placed on the top and bottom rows. The vertices are indexed by the set $\{-r, \cdots , -1,1,  \cdots, r\}$ from left to right. Each vertex in the top row is connected to a corresponding vertex in the bottom row by an edge. There is a vertical line between $-1$ and $1$, which we refer to as the \textit{reflecting axis}. Any element in $W_r$ has a mirror image through the reflecting axis. The diagrammatic representation of the generators $s_i$ is shown in Figure \ref{Generators of the hyperoctahedral group}. 

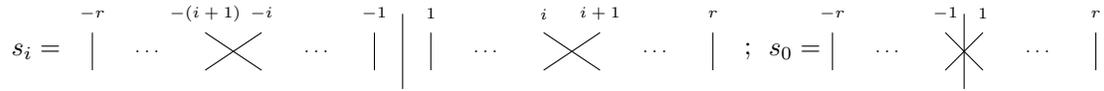
\begin{figure}[H]
\begin{center}
\begin{tikzpicture}[x=0.75cm,y=0.5cm]
\draw[-] (0,0)-- (0,-1);
\draw[-] (2,0)-- (3,-1);
\draw[-] (3,0)-- (2,-1);
\draw[-] (5,0)-- (5,-1);
\draw[-] (5.5,0.5)-- (5.5,-1.5);
\draw[-] (6,0)-- (6,-1);
\draw[-] (8,0)-- (9,-1);
\draw[-] (9,0)-- (8,-1);
\draw[-] (11,0)-- (11,-1);
\node at (-1,-0.5) {$s_{i}=$};
\node at (0,0.5) {\tiny $-r$};
\node at (1,-0.5) {\tiny $\cdots$};
\node at (2,0.5) {\tiny $-(i+1)$};
\node at (3,0.5) {\tiny $-i$};
\node at (4,-0.5) {\tiny $\cdots$};
\node at (5,0.5) {\tiny $-1$};
\node at (6,0.5) {\tiny $1$};
\node at (7,-0.5) {\tiny $\cdots$};
\node at (8,0.5) {\tiny $i$};
\node at (9,0.5) {\tiny $i+1$};
\node at (10,-0.5) {\tiny $\cdots$};
\node at (11,0.5) {\tiny $r$};
\end{tikzpicture}
\begin{tikzpicture}[x=0.5cm,y=0.5cm]
\draw[-] (0,0)-- (0,-1);
\draw[-] (3,0)-- (4,-1);
\draw[-] (3.5,0.5)-- (3.5,-1.5);
\draw[-] (4,0)-- (3,-1);
\draw[-] (7,0)-- (7,-1);
\node at (-2.25,-0.5) {$;$};
\node at (-1,-0.5) {$s_{0}=$};
\node at (0,0.5) {\tiny $-r$};
\node at (1.5,-0.5) {\tiny $\cdots$};
\node at (3,0.5) {\tiny $-1$};
\node at (4,0.5) {\tiny $1$};
\node at (5.5,-0.5) {\tiny $\cdots$};
\node at (7,0.5) {\tiny $r$};
\end{tikzpicture}
\caption{Generators of hyperoctahedral group.}\label{Generators of the hyperoctahedral group}
\end{center}
\end{figure}

\subsection{Combinatorics related to $W_r$}

First recall some standard definitions and notations from \cite{Aa77}, \cite{AMP} and \cite{Mo81}. Let $K$ be an algebraically closed field, and the group algebra of $W_r$ be denoted by $\W_r$. Let $a$ be an integer such that $0 \leq a \leq r$. A \textit{composition} $\lambda$ of $r$ is a finite sequence of non-negative integers $(\lambda_1, \cdots, \lambda_k)$ such that $\sum\limits_{i=1}^k \lambda_i=r$. If the sequence is non-increasing, i.e., $\lambda_1 \geq \cdots \geq \lambda_k$, then $\lambda$ is called a \textit{partition} of $r$, denoted by $\lambda \vdash r$. Let $\Lambda_r$ be the collection of all partitions of $r$.  A \textit{bi-composition} $\blam=(\lambda^{1},\lambda^{2})$ of $r$ is a pair, where $\lambda^{1}=( \lambda_1^{1}, \cdots, \lambda_{k_1}^{1})$ is a composition of $a$ and $\lambda^{2}=(\lambda_1^{2}, \cdots, \lambda_{k_2}^{2})$ is composition of $r-a$ such that $\sum\limits_{i=1}^2\sum\limits_{j=1}^{k_i} \lambda_{j}^{i}=r$. We call $\bolam$ a \textit{bi-partition} of $r$ if both $\lambda^{1}$ and $\lambda^{2}$ are partitions. We also allow either $\lambda^{1}$ or $\lambda^{2}$ to be the empty partition (i.e. having zero parts). The \textit{conjugate} $\blam'$ of a bi-partition $\blam$ is obtained by swapping the partitions $\lambda^{1}$, $\lambda^{2}$ and then taking their conjugates separately, that is, $\blam'=(\lambda^{2'},\lambda^{1'})$. Let $\Lam_r$ denote the set of all bi-partitions of $r$. Given two bi-partitions $\blam=\big(\lambda^{1}, \lambda^{2}\big)$ and $ \bmu=\big(\mu^{1},\mu^{2}\big)$ in $\Lam_r$, we define the dominance order ``$\unrhd$" on $\Lam_r$ as follows: we write $\blam \trianglerighteq \bmu $ if 
\begin{align*}
\sum_{i=1}^{j} \lambda_{i}^{1} &\geq \sum_{i=1}^{j} \mu_{i}^{1} \text{ for all }j,
\end{align*}
and
\begin{align*}
|\lambda^{1}| +\sum_{i=1}^{j'} \lambda_i^{2} &\geq |\mu^{1}| + \sum_{i=1}^{j'} \mu_i^{2} \text{ for all } j'.
\end{align*}
Thus, we have $\blam \trianglerighteq \bmu $ if and only if $\bmu' \trianglerighteq \blam'$. If $\blam \trianglerighteq \bmu$ and $\blam \neq \bmu$, then we write $\blam \triangleright \bmu$. 

Given $\lambda \vdash r$, the associated \textit{Young diagram} consisting of $r$ boxes arranged in $k$ rows, where the $i$-th row contains $\lambda_i$ left-justified boxes. A \textit{Young tableau} $\ta$ of shape $\lambda$ is  a Young diagram of shape $\lambda$, where each box is filled with distinct positive integer $1, 2, \cdots,r$. A Young \textit{$\blam$-bi-tableau} $\tab$ associated with $\blam$ is a pair of tableaux $(\ta^{1}, \ta^{2})$, where each $\ta^{i}$ is a $\lambda^{i}$-tableau, for $i=1,2$. Each box of the Young bi-tableau $\tab$ can be filled by the elements from the set $\{-r,\cdots,-1,1, \cdots, r\}$, where $i$ and $-i$ never appear simultaneously. The hyperperoctahedral group $W_r$ acts on $\tab$ by permuting the entries of the $\blam$-bi-tableau. A \textit{row-permutation} of $\tab$ is an element of $W_r$, which permutes the entries within each row of $\tab$, and may change the sign of the entries in $\ta^{2}$. Let $R_{\tab}$ denote the group of all such row permutations of $\tab$. Then we have the following isomorphism:
\begin{equation*}
    R_{\tab} \cong \Sg_{\lambda_1^{1}} \times \cdots \times \Sg_{\lambda_{k_1}^{1}} \times W_{\lambda_1^{2}} \times \cdots \times W_{\lambda_{k_2}^{2}}.
\end{equation*}
A \textit{column permutation} of $\tab$ permutes the elements of each column of $\tab$ and may change the sign of any entry in $\ta^{1}$. Let $C_{\tab}$ denote the group of column permutations of $\tab$. Note that $C_{\tab}= R_{\tab'}$, where $\tab'$ is a Young bi-tableau of shape $\bolam'$. An \textit{equivalent relation} $\sim$ is defined on the set of $\blam$-tableaux by $\tab_1\sim \tab_2$ if there exists $\bosig\in R_{\tab_1}$ such that $\tab_2=  \tab_1 \bosig$. The equivalence class containing $\tab$ is called $\blam$-\textit{bi-tabloid} and is denoted by $\{\ta\}$. 

Consider $M(\blam)$ to be the K-vector space spanned by all $\blam$-bi-tabloids. The algebra $\W_r$ acts on $\blam$-bi-tabloid by $\{\tab\}\cdot \bosig = \{\tab ~\bosig\}$, for $\bosig \in W_r$. By extending this action linearly, $M(\blam)$ becomes a $\W_r$-module. The $\W_r$-module $M(\blam)$ is called the \textit{permutation module} of $\W_r$. The dimension of $M(\blam)$ is given by 
\[
\dim_K M(\blam)= 2^{r-|\lambda^{2}|}\frac{r!}{\lambda_1^{1}! \cdots \lambda_{k_1}^{1}!  \lambda_1^{2}! \cdots \lambda_{k_2}^{2}!}.
\]
Let $W_{\blam}$ denote the subgroup $\Sg_{\lambda_1^{1}} \times \cdots \times \Sg_{\lambda_{k_1}^{1}} \times W_{\lambda_1^{2}} \times \cdots \times W_{\lambda_{k_2}^{2}}$ of $W_r$, and its corresponding group algebra by $\W_{\blam}$. Since $\W_r$ acts transitively on the set of $\blam$-tabloids and the stabilizer of a fixed $\blam$-bi-tabloid $\{\tab\}$ in $\W_r$ is conjugate to the subgroup $W_{\blam}$, we obtain the following isomorphism
of $\W_r$-modules:
\begin{equation*}
    M(\blam) \cong \Ind_{\W_{\blam}}^{\W_r} 1_{\W_{\blam}},
\end{equation*}
where $1_{\W_{\blam}}$ is the trivial $\W_{\blam}$-module. 

The \textit{alternating column sum} $k_{\tab}$ is an element of $W_r$ defined by $k_{\tab}= \sum\limits_{\bosig \in C_{\tab}} (\sgn \bosig) \bosig.$ If $\tab$ is a bi-tableau, then the corresponding $\blam$-\textit{polytabloid} $e_{\tab}$ is defined by $e_{\tab}:= \{\tab\}\cdot k_{\tab}$. For any bi-partition $\blam$, the\textit{ Specht module} $S(\blam)$ is the submodule of $M(\blam)$ spanned by all $\blam$-polytabloids. It is also a cyclic $\W_r$-module generated by a single $\blam$-polytabloid as shown in [\cite{Mo81}, Theorem 2.5]. Define a bilinear form $\langle-, -\rangle$ on the permutation module $M(\blam)$ such that it is orthonormal on the $\blam$-bi-tabloids. That is, if $\{\tab_1\}, \{\tab_2\} \in M(\blam)$, then
\begin{equation*}
    \langle \{\tab_1 \}, \{\tab_2\}\rangle= \delta_{\{\tab_1\} ,\{\tab_2\}}=
    \begin{cases}
        1 & \text{ if } \{\tab_1 \}= \{\tab_2 \},\\
        0 &\text{ otherwise.}
    \end{cases}
\end{equation*}
For any bi-partition $\blam$, the quotient module $\frac{S(\blam)}{S(\blam)\cap (S(\blam))^{\perp}}$ is either zero or absolutely irreducible by [\cite{Mo81}, Theorem 2.12]. We denote this quotient module by $D(\blam)$. When $\mathrm{char}~K =0$, it follows from [\cite{Mo81}, Corollary 2.13] that the set $\{S(\blam): \blam \in \Lam_r\}$ forms a complete set of simple $\W_r$-modules. If $\mathrm{char}~K =p~( \neq 2)$, then a bi-partition $\blam$ is called $p$-\textit{regular} if both of the components $\lambda^{1}$ and $\lambda^{2}$ are $p$-regular. A bi-partition $\blam$ is said to be $2$-\textit{regular} if $|\lambda^{1}|=0$ and $\lambda^{2}$ is $2$-regular. When $\mathrm{char}~K =p$, all simple modules are indexed by the set of $p$-regular bi-partitions of $r$ as shown in [\cite{Mo81}, Theorem 2.17].

If the characteristic of the field is $0$, then $M(\blam)$ can be decomposed into a direct sum of Specht modules. However, in the modular case, the Specht modules are no longer simple. Therefore, our interest lies in classifying the indecomposables in the decomposition of $M(\blam)$ over a field of positive characteristic. By [\cite{CR90}, Theorem 6.12], $M(\blam)$ can be decomposed into a direct sum of indecomposable modules, say $M(\blam)= \bigoplus_i Y_i$.

Let $\tab$ be a $\blam$-tableau with $k_{\tab}$ be its corresponding alternating column sum. Since $M(\blam)k_{\tab}= Ke_{\tab}$ by [\cite{Mo81}, Lemma 2.9], there exists a unique summand $Y_j$ of $M(\bolam)$ such that $S(\blam) \cap Y_j \neq 0$. By [\cite{Mo81}, Theorem 2.11], $Y_j$ is the unique direct summand of $M(\blam)$ containing $S(\blam)$ as a submodule. This module is referred to as the \textit{Young module} of $\W_r$ associated with $\blam$, and is denoted by $Y(\blam)$. The decomposition of permutation module of the Ariki-Koike algebra (which is the quantum analogue of the group algebra of $\Z_m \wr \Sg_r$) is proved in [\cite{Ma}, (3.5)] using the concept of cyclotomic $q$-Schur algebra. However, in our context, we present the following decomposition under the assumption $q=1=Q_1=\cdots=Q_m$ and $m=2$.
\begin{theorem} \label{Decomposition of the permutation module of hyperoctahedral group}
Let $ \blam \in \Lam_r.$ The permutation module $M(\blam)$ of $\W_r$ can be decomposed as a direct sum of indecomposable Young modules 
\begin{equation*}
   M(\blam)= Y(\blam) \oplus  \bigoplus_{ \substack{\bmu \unrhd \blam\\ \bmu \in \Lam_r}}   Y(\bmu)^{ c_{\bmu}},
\end{equation*}
where $c_{\bmu}$ is the multiplicity with which $Y(\bmu)$ appears in the decomposition of $M(\blam)$.
\end{theorem}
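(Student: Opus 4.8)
\emph{Proof proposal.}
The plan is to transplant James's proof of the corresponding statement for the symmetric group \cite{Ja} (in the form used for diagram algebras by Hartmann--Paget \cite{HP}) to the wreath product $W_r\cong\Z/2\Z\wr\Sg_r$, using Morita's Specht-module theory for $\W_r$ from \cite{Mo81} in place of James's theory for $\Sg_r$. A shorter route would be to deduce the statement from Mathas's decomposition \cite{Ma} of the permutation modules of the cyclotomic $q$-Schur algebra by specialising $q=1$, $m=2$ and $Q_1=Q_2=1$; but that requires one to check that indecomposability of the Young modules is preserved under the specialisation, so I prefer the direct argument, which moreover works uniformly in every characteristic (including $2$). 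By the Krull--Schmidt theorem \cite[Theorem~6.12]{CR90} we may write $M(\blam)=\bigoplus_i Y_i$ with each $Y_i$ indecomposable, and it then suffices to prove three assertions: (A) every $Y_i$ is isomorphic to a Young module $Y(\bmu)$ with $\bmu\in\Lam_r$; (B) if $Y(\bmu)$ is a direct summand of $M(\blam)$ then $\bmu\unrhd\blam$; and (C) $Y(\blam)$ occurs in $M(\blam)$ with multiplicity exactly one. (In the statement the copy of $Y(\blam)$ coming from (C) is exhibited separately, so the sum over $\bmu\unrhd\blam$ should be understood to run over $\bmu\triangleright\blam$.)

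The engine of the argument is the hyperoctahedral analogue of James's dominance lemma: if $\tab$ is a $\blam$-bi-tableau with alternating column sum $k_{\tab}=\sum_{\bosig\in C_{\tab}}(\sgn\bosig)\bosig$, and $\{s\}$ is a $\bmu$-bi-tabloid with $\{s\}\cdot k_{\tab}\neq0$ in $M(\bmu)$, then $\blam\unrhd\bmu$. This is proved by a signed Gale--Ryser/Hall-type argument. Non-vanishing of $\{s\}\cdot k_{\tab}$ is equivalent to $\Stab_{W_r}(\{s\})\cap C_{\tab}$ lying in the kernel of the sign character defining $k_{\tab}$; each factor of this intersection is a symmetric group or a hyperoctahedral group, according to whether the overlapping entries lie in the first or the second component of $\tab$ and of $\{s\}$, and demanding that the sign character be trivial on every such factor translates into a pair of systems of $0$--$1$ matrix conditions, one for each component of the bi-partitions, whose solvability by Gale--Ryser is precisely the pair of families of inequalities defining $\blam\unrhd\bmu$. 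This is essentially the combinatorial input to Morita's submodule theorem \cite[Theorem~2.12]{Mo81}.

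The lemma is used in two ways. First, suppose $\theta\in\Hom_{\W_r}(M(\blam),M(\bmu))$ does not annihilate $S(\blam)$. Since $S(\blam)$ is cyclic, generated by a polytabloid $e_{\tab}=\{\tab\}\cdot k_{\tab}$ (\cite[Theorem~2.5]{Mo81}), we have $\theta(e_{\tab})=\theta(\{\tab\})\cdot k_{\tab}\neq0$; expanding $\theta(\{\tab\})$ in the bi-tabloid basis of $M(\bmu)$ produces a $\bmu$-bi-tabloid $\{s\}$ with $\{s\}\cdot k_{\tab}\neq0$, whence $\blam\unrhd\bmu$. Secondly, any nonzero $\alpha\in\Hom_{\W_r}(S(\bmu),M(\blam))$ sends the polytabloid generator $e_{\tab^{\bmu}}$ of $S(\bmu)$ to a nonzero $v\in M(\blam)$, and since $e_{\tab^{\bmu}}\bosig=(\sgn\bosig)\,e_{\tab^{\bmu}}$ for $\bosig\in C_{\tab^{\bmu}}$, the element $v$ is an eigenvector for $C_{\tab^{\bmu}}$ affording the same sign character; restricting $v$ to a single $C_{\tab^{\bmu}}$-orbit of $\blam$-bi-tabloids shows that some $\blam$-bi-tabloid $\{u\}$ appearing in $v$ satisfies $\{u\}\cdot k_{\tab^{\bmu}}\neq0$, so the lemma applied to the $\bmu$-bi-tableau $\tab^{\bmu}$ and $\{u\}$ gives $\bmu\unrhd\blam$. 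Now (B) follows at once: a summand $Y(\bmu)$ of $M(\blam)$ contains $S(\bmu)$ by the definition of the Young module, so $\Hom_{\W_r}(S(\bmu),M(\blam))\neq0$ and hence $\bmu\unrhd\blam$. For (A), let $Y$ be an indecomposable summand of $M(\blam)$ and let $\bmu$ be maximal under $\unrhd$ among the finitely many bi-partitions (at least $\blam$) with $Y\mid M(\bmu)$; the argument of \cite{Ja} (see also \cite{HP}), combining the first application above with the kernel-intersection description of $S(\bmu)\subseteq M(\bmu)$ coming from \cite[Theorem~2.12]{Mo81} and the maximality of $\bmu$, shows that $Y$ contains a copy of $S(\bmu)$, hence $Y\cong Y(\bmu)$ by the uniqueness statement \cite[Theorem~2.11]{Mo81}; this simultaneously forces such a $\bmu$ to be unique. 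Finally (C) follows from \cite[Theorem~2.11]{Mo81} together with the usual multiplicity argument (the one-dimensionality of $M(\blam)k_{\tab}$, or equivalently the fact that $D(\blam)$ occurs in $M(\blam)$ with composition multiplicity one). Assembling (A)--(C) yields the asserted decomposition.

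The step I expect to be the main obstacle is the signed dominance lemma. Over $\Sg_r$ the row and column groups are products of symmetric groups, whereas over $W_r$ they are products of symmetric groups and hyperoctahedral groups, and one has to make sure that the generators interchanging $i$ and $-i$ interact correctly with the sign character so as not to corrupt the Gale--Ryser bookkeeping --- this is precisely where the representation theory of $\W_r$ is genuinely more intricate than that of $\Sg_r$. A secondary subtlety lies inside step (A): passing from ``$Y$ contains a copy of $S(\bmu)$'' to ``$Y\cong Y(\bmu)$'', and verifying that the dominance-maximal $\bmu$ is unique, requires the full submodule theorem for $\W_r$ rather than merely its statement for the individual Specht modules.
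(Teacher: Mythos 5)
Your approach is genuinely different from the paper's: the paper gives no proof of this theorem at all, but quotes it from Mathas's decomposition of permutation modules for the cyclotomic $q$-Schur algebra, specialised at $q=1$, $m=2$, $Q_1=Q_2=1$. A direct, James-style argument of the kind you outline would make the treatment self-contained and avoid the (legitimate) worry you raise about transporting indecomposability through the specialisation, and your reduction to (A)--(C) together with the use of [Mo81, Theorems~2.5, 2.11, 2.12] is the right skeleton --- it is exactly the Hartmann--Paget template and matches how the paper itself defines $Y(\blam)$.

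There is, however, a genuine gap, and it sits exactly where you predicted the ``main obstacle'' would be: the signed dominance lemma is false in the direction you assert it, under the conventions of Section~2. Take $r=1$, so $W_1=\Z/2\Z$, and $\mathrm{char}\,K\neq2$. For $\blam=\big((0),(1)\big)$ the column group $C_{\tab}$ is trivial (column permutations only change signs of entries in $\ta^{1}$, which is empty), so $k_{\tab}=1$ and $\{s\}\cdot k_{\tab}=\{s\}\neq0$ for the unique $\bmu$-bi-tabloid with $\bmu=\big((1),(0)\big)$; yet $\big((0),(1)\big)\ntrianglerighteq\big((1),(0)\big)$ in the paper's order, since the first-component inequality would read $0\geq1$. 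Assertion (B) fails for the same reason: $M\big((1),(0)\big)$ is the regular module of $\Z/2\Z$, its trivial summand contains $S\big((0),(1)\big)$ and hence is $Y\big((0),(1)\big)$, but $\big((0),(1)\big)$ does not dominate $\big((1),(0)\big)$. The source of the failure is precisely the asymmetry you flagged: the sign-change generators lie in the row group for entries of $\ta^{2}$ but in the column group for entries of $\ta^{1}$, so the Gale--Ryser bookkeeping does not return ``the same inequalities in each component'' --- it returns the dominance inequalities with the two components read in the opposite order from the paper's definition of $\unrhd$. To repair the argument you must prove the lemma with the corrected order (or swap the conventions so that the printed $\unrhd$ becomes the correct one) and restate (B) and the indexing in the theorem accordingly; as written, the engine of the proof is asserted rather than proved, and its conclusion is wrong in one component, which propagates to (A), (B) and the final decomposition. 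Steps (C) and the passage from ``$Y$ contains $S(\bmu)$'' to ``$Y\cong Y(\bmu)$'' are fine once the lemma is fixed, since they only use [Mo81, Lemma~2.9 and Theorem~2.11] as the paper does.
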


Let $M$ be a $\W_r$-module. The dual module $M^*$ is the $K$-vector space spanned by all the linear maps from $M$ to $K$. The action of $\W_r$ on $M^*$ is given by $(m)(\phi \cdot \bosig):= (m \cdot \bosig^{-1}) \phi$, for $m \in M,\bosig \in \W_r \text{ and } \phi \in M^*$. Define the sign function $\sgn: \W_r \longrightarrow \{-1,1\}$ as follows: $\sgn~ \bosig=(-1)^{\sum_i x_i} \sgn~ \sigma$, where $x_i$ is the number of $\xi_i$ appears in $\bosig$. When $\blam=\big((1^r), (0)\big)$, we call $S(\bolam)$ the \textit{sign module}, and is denoted it by $\sgn$. This is an one-dimensional $\W_r$-module, where each element $\bosig \in \W_r$ acts by $\sgn~\bosig$. Let $\blam'$ be the conjugate of $\blam$. The following proposition connects the dual Specht modules of $\W_r$ associated to $\bolam'$, leads to a result similar to that for symmetric groups, as stated in [\cite{JaB}, Theorem 6.7].

\begin{proposition} \label{dual Specht module is isomorphic to Specht tensor sgn over Q for type C}
    Over the field $\mathbb{Q}$, the dual of $S(\blam')$ is isomorphic to $ S(\blam) \otimes_{\mathbb{Q}} \sgn$.
\end{proposition}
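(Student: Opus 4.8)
\emph{Plan.} We work over $\mathbb{Q}$, a splitting field for $W_r$, so that $\W_r$ is semisimple and, by [\cite{Mo81}, Corollary 2.13], the Specht modules $\{S(\bmu):\bmu\in\Lam_r\}$ form a complete set of pairwise non-isomorphic irreducible $\W_r$-modules. Both $(S(\blam'))^*$ and $S(\blam)\otimes_{\mathbb{Q}}\sgn$ are then irreducible, so it is enough to establish: (a) $S(\bmu)^*\cong S(\bmu)$ for every $\bmu\in\Lam_r$; and (b) $S(\blam)\otimes_{\mathbb{Q}}\sgn\cong S(\blam')$. Indeed, dualising (b) and using that $\sgn$ is one-dimensional (hence self-dual) gives
\[
(S(\blam'))^*\;\cong\;\bigl(S(\blam)\otimes\sgn\bigr)^*\;\cong\;S(\blam)^*\otimes\sgn\;\cong\;S(\blam)\otimes\sgn
\]
by (a); so the content is entirely (a) and (b).

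For (a): every element of $W_r$ is conjugate to its inverse. Conjugacy classes of $W_r$ are labelled by signed cycle types, i.e.\ by the pair of partitions recording the lengths of the positive and of the negative cycles of a signed permutation, and inverting a signed permutation reverses each of its cycles, altering neither the length nor the sign of a cycle. Hence every irreducible character of $W_r$ is real-valued, so every irreducible $\W_r$-module is isomorphic to its dual; in particular $S(\bmu)^*\cong S(\bmu)$. (Alternatively: $M(\bmu)\cong\Ind_{\W_{\bmu}}^{\W_r}1_{\W_{\bmu}}$ is self-dual because the dual of an induced module is the induction of the dual — equivalently the orthonormal form on the $\bmu$-bi-tabloids identifies $M(\bmu)$ with $M(\bmu)^*$ — and over $\mathbb{Q}$ we have $M(\bmu)=S(\bmu)\oplus S(\bmu)^{\perp}$ with $S(\bmu)$ occurring in $M(\bmu)$ with multiplicity one, so dualising forces $S(\bmu)^*\cong S(\bmu)$.)

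For (b): fix a $\blam$-bi-tableau $\tab$ and let $\tab'$ be the conjugate $\blam'$-bi-tableau, so $R_{\tab}=C_{\tab'}$ and $C_{\tab}=R_{\tab'}$; set $a_{\tab}:=\sum_{\bosig\in R_{\tab}}\bosig$, while $k_{\tab}=\sum_{\bosig\in C_{\tab}}(\sgn\bosig)\bosig$ as in the text. Identifying $M(\blam)$ with the one-sided ideal $\W_r\epsilon_{\blam}$ of $\W_r$ generated by the idempotent $\epsilon_{\blam}=\tfrac{1}{|R_{\tab}|}a_{\tab}$ (which carries the base bi-tabloid $\{\tab\}$ to $\epsilon_{\blam}$) sends the polytabloid $e_{\tab}=\{\tab\}k_{\tab}$ to $\tfrac{1}{|R_{\tab}|}a_{\tab}k_{\tab}$; since $S(\blam)$ is cyclic on $e_{\tab}$ ([\cite{Mo81}, Theorem 2.5]), this yields $S(\blam)\cong\W_r a_{\tab}k_{\tab}$. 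The sign character gives an algebra automorphism $\rho\colon\W_r\to\W_r$, $\rho(\bosig)=(\sgn\bosig)\bosig$, and tensoring a module by $\sgn$ is the same as restricting the action along $\rho$, so $S(\blam)\otimes\sgn\cong\W_r\,\rho(a_{\tab}k_{\tab})$. Since $R_{\tab}=C_{\tab'}$, $C_{\tab}=R_{\tab'}$ and $(\sgn\bosig)^2=1$,
\[
\rho(a_{\tab})=\sum_{\bosig\in R_{\tab}}(\sgn\bosig)\bosig=k_{\tab'},\qquad \rho(k_{\tab})=\sum_{\bosig\in C_{\tab}}\bosig=a_{\tab'},
\]
so $\rho(a_{\tab}k_{\tab})=k_{\tab'}a_{\tab'}$ and $S(\blam)\otimes\sgn\cong\W_r k_{\tab'}a_{\tab'}$. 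It remains to see $\W_r k_{\tab'}a_{\tab'}\cong\W_r a_{\tab'}k_{\tab'}=S(\blam')$: multiplication by suitable elements supplies nonzero $\W_r$-module maps in each direction between these two irreducible ideals, nonzero because $a_{\tab'}k_{\tab'}a_{\tab'}$ and $k_{\tab'}a_{\tab'}k_{\tab'}$ are nonzero — their identity coefficients equal $|R_{\tab'}|$ and $|C_{\tab'}|$ respectively, the only contributing triples $\bosig_1\bosig_2\bosig_3=1$ with $\bosig_1,\bosig_3\in R_{\tab'}$, $\bosig_2\in C_{\tab'}$ (resp.\ with the roles of $R_{\tab'}$ and $C_{\tab'}$ exchanged) having $\bosig_2\in R_{\tab'}\cap C_{\tab'}=\{1\}$ — so the two ideals are isomorphic.

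\emph{The main obstacle.} I expect the crux to be (b), and inside it the bookkeeping required to transfer the classical Young-symmetrizer realisation of Specht modules to $W_r$ in the conventions of this paper: one must check that the identification of $M(\blam)$ with a one-sided ideal is compatible with the right action on bi-tabloids and with the rule that row permutations may change signs in $\ta^2$ while column permutations may change signs in $\ta^1$, and verify that $R_{\tab'}\cap C_{\tab'}=\{1\}$ persists in this signed setting (it does: any common element has trivial underlying permutation, and the sign changes permitted to $R_{\tab'}$ act on entries of $\tab'$ disjoint from those permitted to $C_{\tab'}$). If one prefers to avoid one-sided ideals, the alternative is to imitate James's proof of [\cite{JaB}, Theorem 6.7] directly on bi-tabloids — writing down an explicit $\W_r$-homomorphism $M(\blam)\otimes\sgn\to M(\blam')$ and checking its restriction to $S(\blam)\otimes\sgn$ is a nonzero map into $S(\blam')$ — where the obstacle shifts to pinning down the correct incidence pairing between $\blam$- and $\blam'$-bi-tabloids.
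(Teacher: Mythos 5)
Your argument is correct, but it takes a genuinely different route from the paper's. The paper proves the isomorphism $S(\blam')\cong S(\blam)\otimes\sgn$ by the bi-tabloid analogue of James's proof of [\cite{JaB}, Theorem 6.7] — i.e.\ exactly the "alternative" you sketch at the end: it defines the unique $\W_r$-map $M(\blam')\to S(\blam)\otimes\sgn$, $\{\tab'\}\mapsto e_{\tab}\otimes 1$ (using that $e_{\tab}\otimes 1$ is $R_{\tab'}$-invariant), evaluates it on $e_{\tab'}$ to get $(e_{\tab}\rho_{\tab})\otimes 1$, and uses the orthonormal form on bi-tabloids to see $\langle e_{\tab}\rho_{\tab},\tab\rangle=|R_{\tab}|\neq 0$, then invokes Schur's lemma. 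Your main line instead realises $S(\blam)$ as a one-sided ideal generated by a Young symmetrizer, twists by the sign automorphism $\rho$ to convert row sums into signed column sums, and compares $\W_r k_{\tab'}a_{\tab'}$ with $\W_r a_{\tab'}k_{\tab'}$ via the nonvanishing of $a_{\tab'}k_{\tab'}a_{\tab'}$ and $k_{\tab'}a_{\tab'}k_{\tab'}$ (which indeed rests on $R_{\tab'}\cap C_{\tab'}=\{1\}$, correctly verified in the signed setting since the two groups change signs on disjoint sets of entries). Two remarks. First, your step (a) is a genuine addition: the paper's proof as written establishes $S(\blam')\cong S(\blam)\otimes\sgn$ and silently identifies this with the dual; the self-duality of irreducible $\mathbb{Q}W_r$-modules (real-valued characters, every signed permutation conjugate to its inverse) is exactly what is needed to close that gap, and your first argument for (a) is solid — the multiplicity-one alternative is less immediate as stated, since knowing $S(\bmu)^*$ occurs in $M(\bmu)$ does not by itself pin it down without the ordering/dominance information. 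Second, watch the left/right convention: the paper's modules are right modules with $\{\tab\}\cdot\bosig=\{\tab\bosig\}$, so the ideal realisation should be $\epsilon_{\blam}\W_r$ and $a_{\tab}k_{\tab}\W_r$ rather than left ideals; this is cosmetic and does not affect the argument. What your approach buys is independence from the incidence pairing on bi-tabloids; what the paper's buys is a concrete map that stays entirely within the tabloid formalism already set up for the rest of the article.
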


\begin{proof}
It is straightforward to observe that $e_{\tab}$ is skew-invariant under the action of $C_{\tab}$. Consequently, $e_{\tab}\otimes 1$ remains invariant under the group $R_{\tab'}=C_{\tab}$, where $\tab'$ is a $\bolam'$-bi-tableau. Hence, there exists a unique $\W_r$-equivariant map 
    \begin{align*}
        f:M(\blam') &\longrightarrow S(\blam) \otimes \sgn\\
        \{\tab'\} &\longmapsto e_{\tab} \otimes 1.
    \end{align*}
 And, 
    \[
    f(e_{\tab'})=f(\{\tab'\} \cdot k_{\tab'})= (e_{\tab} \rho_{\tab})\otimes 1, 
    \]
    where $\rho_{\tab}$ is the row sum of the $\bolam$-bi-tableau $\tab$. Now, we need to show that $e_{\tab} \rho_{\tab}\neq 0$:
    \[
    \langle e_{\tab} \rho_{\tab},\tab \rangle= \langle e_{\tab}, \tab \cdot \rho_{\tab} \rangle= |R_{\tab}|\langle e_{\tab}, \tab \rangle =|R_{\tab}|\neq 0.
    \]
Therefore, we conclude that the restriction $f|_{S(\blam')}: S(\blam') \longrightarrow S(\blam) \otimes \sgn$ defines a non-zero $\W_r$-module homomorphism. Given that both modules are irreducible and non-zero,  Schur's lemma implies that $f|_{S(\blam')}$ is infact an isomorphism.

\end{proof}

We denote the dual of the Specht module $S(\blam)$ by $S'(\blam)$, and refer to it as the \textit{dual Specht module}. 
\begin{corollary} \label{dual Specht module is isomorphic to tensor product of two specht in any field for hyperoctahedral group}
    Over any field of characteristic $p$, the dual Specht module $S'(\blam')$ is isomorphic to $S(\blam) \otimes S\big((1^r),(0)\big)$. 
\end{corollary}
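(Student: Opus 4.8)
The plan is to deduce the statement over an arbitrary field of characteristic $p$ from the characteristic-$0$ isomorphism of Proposition \ref{dual Specht module is isomorphic to Specht tensor sgn over Q for type C} by a standard base-change / reduction-mod-$p$ argument, exactly in the spirit of the symmetric group case. First I would set up the integral picture: the permutation module $M(\blam)$, the Specht module $S(\blam)$, the sign module $S\big((1^r),(0)\big)$, the bilinear form $\langle-,-\rangle$, the alternating column sums $k_{\tab}$, and the row sums $\rho_{\tab}$ are all defined over $\Z$ (indeed over the prime ring), because all of these objects have bases of (poly)tabloids with integer structure constants for the $W_r$-action. So one has $\Z W_r$-lattices $M(\blam)_{\Z}$, $S(\blam')_{\Z}$, $S(\blam)_{\Z}\otimes_{\Z}\sgn_{\Z}$, and the map
\[
f_{\Z}\colon M(\blam')_{\Z}\longrightarrow S(\blam)_{\Z}\otimes_{\Z}\sgn_{\Z},\qquad \{\tab'\}\longmapsto e_{\tab}\otimes 1,
\]
which is well-defined and $W_r$-equivariant by the same skew-invariance computation as in the proof of the proposition (the argument there used no division). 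Restricting to the Specht lattice gives $f_{\Z}|_{S(\blam')_{\Z}}\colon S(\blam')_{\Z}\to S(\blam)_{\Z}\otimes_{\Z}\sgn_{\Z}$, and the computation $\langle e_{\tab}\rho_{\tab},\tab\rangle = |R_{\tab}|$ shows $f_{\Z}(e_{\tab'})=(e_{\tab}\rho_{\tab})\otimes 1$ is a nonzero integral element.

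Next I would identify the image. The key point is that $f_{\Z}(e_{\tab'}) = (e_{\tab}\rho_{\tab})\otimes 1$ generates, as a $\Z W_r$-module, the dual Specht lattice inside $S(\blam)_{\Z}\otimes_{\Z}\sgn_{\Z}$: concretely, $S(\blam)_{\Z}\otimes\sgn$ carries the form dual to the restricted form on $S(\blam)_{\Z}$, and the element $e_{\tab}\rho_{\tab}$ is, up to the invertible-over-$\Q$ but possibly non-invertible-over-$\Z$ scalar structure, the image of a cyclic generator of $S(\blam')$. More robustly, I would argue as in James: over $\Q$, $f|_{S(\blam')}$ is an isomorphism onto $S(\blam)\otimes\sgn$ by the proposition; since $S(\blam')_{\Z}$ is generated by the single polytabloid $e_{\tab'}$ (by [\cite{Mo81}, Theorem 2.5], which holds integrally), the lattice $f_{\Z}(S(\blam')_{\Z})$ is the cyclic $\Z W_r$-sublattice of $S(\blam)_{\Z}\otimes\sgn$ generated by $(e_{\tab}\rho_{\tab})\otimes 1$. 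One then checks that this cyclic lattice is precisely the dual Specht lattice $S'(\blam')_{\Z}$ — equivalently, that $S(\blam')_{\Z}$ is a full-rank sublattice of $M(\blam')_{\Z}$ whose $\Z$-linear dual, realized inside $M(\blam')_{\Z}^{*}\cong M(\blam')_{\Z}$ via the orthonormal form, is carried isomorphically by $f_{\Z}$ onto the analogous dual of $S(\blam)_{\Z}\otimes\sgn$. The cleanest route is: dualize the short exact sequence $0\to S(\blam')_{\Z}\to M(\blam')_{\Z}$ appropriately, and observe $f_{\Z}$ is compatible with the forms up to the equivariant pairing, so its transpose recovers the quotient map defining $S'(\blam')$. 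Finally, tensoring the integral isomorphism $S'(\blam')_{\Z}\xrightarrow{\ \sim\ } S(\blam)_{\Z}\otimes_{\Z}S\big((1^r),(0)\big)_{\Z}$ with $K$ over $\Z$ (using that an isomorphism of $\Z$-lattices stays an isomorphism after $-\otimes_{\Z}K$, and that $\otimes$ commutes with base change) yields $S'(\blam')\cong S(\blam)\otimes_{K} S\big((1^r),(0)\big)$ over $K$.

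The main obstacle I anticipate is the middle step: making precise, over $\Z$, that the image of the dual Specht lattice under $f_{\Z}$ is exactly the dual Specht lattice on the other side, rather than merely a finite-index sublattice of it. Over $\Q$ Schur's lemma did all the work, but integrally one must genuinely track the duality: one wants to know that $f_{\Z}$ intertwines the surjection $M(\blam')_{\Z}\twoheadrightarrow S'(\blam')_{\Z}$ (dual to $S(\blam')_{\Z}\hookrightarrow M(\blam')_{\Z}$ under $\langle-,-\rangle$) with the corresponding surjection on the $S(\blam)\otimes\sgn$ side. This is handled by the identity $\langle e_{\tab}\rho_{\tab},\{\tab\}\rangle = |R_{\tab}|$ together with $W_r$-equivariance, which pins down $f_{\Z}$ on the distinguished generator and hence, by cyclicity, everywhere; the only care needed is that the form on $S(\blam)_{\Z}\otimes\sgn$ used to define its dual Specht lattice matches the push-forward under $f_{\Z}$ of the form on $M(\blam')_{\Z}$. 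Once this bookkeeping is done, the base change to characteristic $p$ is immediate and formal.
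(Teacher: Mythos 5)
Your overall strategy --- realise every object over $\Z$ and then base change --- is the same one the paper's (very terse) proof gestures at, and your construction of the integral map $f_{\Z}\colon M(\blam')_{\Z}\to S(\blam)_{\Z}\otimes\sgn$ is fine. The genuine gap sits exactly in the step you flag as the main obstacle, and the tools you propose do not close it. First, the scalar $\langle e_{\tab}\rho_{\tab},\{\tab\}\rangle=|R_{\tab}|$ is divisible by $p$ as soon as some part of $\blam$ is at least $p$ (and always when $p=2$ and $\lambda^{2}\neq\emptyset$), so this identity certifies nonvanishing over $\Z$ and $\mathbb{Q}$ but says nothing after reduction mod $p$. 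Second, and more seriously, the cyclic $\Z W_r$-lattice generated by $(e_{\tab}\rho_{\tab})\otimes 1$ is $f_{\Z}\bigl(S(\blam')_{\Z}\bigr)$, the image of the \emph{Specht} lattice, and this is only a finite-index sublattice of $S(\blam)_{\Z}\otimes\sgn$; its index is controlled by the Gram determinant of the bilinear form on $S(\blam')_{\Z}$, which is divisible by $p$ precisely when that form degenerates over $K$. Concretely, after $\otimes_{\Z}K$ the restriction $f|_{S(\blam')}$ has kernel $S(\blam')\cap S(\blam')^{\perp}$, which is nonzero for most $\blam$ in characteristic $p$, so no amount of bookkeeping makes the restriction to the Specht lattice an isomorphism: a map of lattices that is an isomorphism over $\mathbb{Q}$ need not remain one on the special fibre, and here it genuinely does not.

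The repair is to abandon the restriction to $S(\blam')$ and argue with the quotient throughout, as in the proof of [\cite{JaB}, Theorem 8.15]: over $K$ the map $f\colon M(\blam')\to S(\blam)\otimes\sgn$ is surjective (the polytabloids $e_{\tab}$ span $S(\blam)$ by definition); one checks directly that $S(\blam')^{\perp}\subseteq\ker f$; and the standard basis theorem gives $\dim_K S(\blam')=\dim_K S(\blam)=\dim_K M(\blam')-\dim_K S(\blam')^{\perp}$ in every characteristic, which forces $\ker f=S(\blam')^{\perp}$. Since the form on $M(\blam')$ is orthonormal, $M(\blam')/S(\blam')^{\perp}\cong S'(\blam')$, yielding the claimed isomorphism. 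This argument is characteristic-free from the start and needs no descent from characteristic $0$ at all; it is also what the paper's one-line proof must implicitly be relying on, since the literal statement ``$S'(\blam')$ over $K$ is $K\otimes_{\mathbb{Q}}S'(\blam')$'' cannot be taken at face value in characteristic $p$.
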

\begin{proof}
    The proof follows immediately from the fact that $S'(\blam')$ over $K$ is isomorphic to $K\otimes_{\mathbb{Q}} S'(\blam')$. 
\end{proof}

\subsection{Morita equivalence for hyperoctahedral groups}\label{Morita equivalence for Hecke algebras for type C}

The previous section examined the Specht module and its dual via tableau combinatorics. This subsection establishes their connection to the representation theory of symmetric groups.

Let $0 \leq b \leq r$. Recall from [\cite{DJ92}, Definition 3.27] that $e_{b,r-b}:=\tilde{z}_{r-b,b}^{-1}v_{b,r-b} h_{r-b,b}$ is the orthogonal idempotent of $\W_r$, where $\tilde{z}_{r-b,b}$ is an invertible element of $K\Sg_{b,r-b}$ provided that $\ch~K \neq 2$, and $v_{b,r-b}$, $h_{b,r-b}$ are elements in $\W_r$ (see [\cite{DJ92}, Definition 3.8, Definition 2.3]). Here $K\Sg_{b,r-b}$ denotes the group algebra of direct product of two symmetric groups. If $\ch~K \neq 2$, then an idempotent $e_{b,r-b}$ satisfies 
 \begin{equation}\label{Morita equivalent idempotent satisfy the vi condition for type C}
    v_{b,r-b} \W_r = e_{b,r-b}\W_r,  \text{ for }0\leq b \leq r.
 \end{equation}
 It follows from [\cite{DJ92}, Lemma 4.16] that $e_{b,r-b}\W_r e_{b,r-b}$ is canonically isomorphic to $K\Sg_{b,r-b}$ for each $b$. Let $\epsilon= \sum_{b=0}^{r} e_{b,r-b}$, then $\epsilon \W_r$ is a projective generator for the category of $\W_r$-modules. Therefore, the category of $\W_r$-modules is Morita equivalent to the category of right $\epsilon\W_r \epsilon$-modules. This gives rise to the following Morita equivalence:
\begin{equation}\label{Morita equivalent of Wn for type C}    
F:\text{\textbf{mod}-}\W_r \underset{\text{Morita}}{\simeq} \text{\textbf{mod}-}\bigoplus_{b=0}^r K\Sg_{b,r-b}:G,
\end{equation}
where $\text{\textbf{mod}-}\W_r$ (resp. $\text{\textbf{mod}-}\bigoplus_{b=0}^r K\Sg_{b,r-b}$) denotes the category of (finite dimensional) right $\W_r$-modules (resp. $\bigoplus_{b=0}^r K\Sg_{b,r-b}$-modules). The equivalence is realized by the functors
\[
F(N)=N \otimes_{\W_{r}} \W_r\epsilon \quad \text{ and } \quad G(M)=\Hom_{\W_r}( \epsilon\W_r, M),
\]
for $N \in\text{\textbf{mod}-}\W_r$ and $M\in \text{\textbf{mod}-}\bigoplus_{b=0}^r K\Sg_{b,r-b}$. These functors satisfy the identities $$F \circ G=1_{\text{\textbf{mod}-}\bigoplus_{b=0}^r K\Sg_{b,r-b}} \text{ and }G \circ F= 1_{\text{\textbf{mod}-}\W_r}.$$

\subsection{Vanishing cohomology on hyperoctahedral group} \label{Vanishing cohomology on hyperoctahedral group for hyperoctahedral group}

We generalize the results of Hemmer and Nakano \cite{HN} to $\W_r$ establishing vanishing results for the cohomology of Specht and dual Specht modules of $\W_r$. Our approach relies on the Morita equivalence of Dipper and James \cite{DJ92}, which relates $\W_r$-modules to $\bigoplus\limits_{b=0}^r K\Sg_{b,r-b}$-modules (see Section \ref{Morita equivalence for Hecke algebras for type C}).

For $0\leq b \leq r$, define the subset $\Lam_b^{+}=\{\bolam=(\lambda^1, \lambda^2)\in \Lam_r: |\lambda^1|=b\}$. There is a natural bijection 
\[
\Lambda_b \times \Lambda_{r-b} \longleftrightarrow  \Lam_b^{+},
\]
sending the pair $\lambda^1 \in \Lambda_b$ and $\lambda^2 \in \Lambda_{r-b}$ to the bi-partition $\bolam \in \Lam_b^{+}$. The following result due to Dipper and Mathas [\cite{DM02}, Proposition 4.11] describes the image of Specht modules under the Morita equivalence.

\begin{proposition}\label{Image of Specht under Morita is the tensor product of Specht}
    Let $\ch ~K \neq 2$. If $\blam =(\alpha, \beta)\in \Lam_r$, where $\alpha \in \Lambda_b$ and $\beta \in \Lambda_{r-b}$, then we have $S(\blam)= G(S^{\alpha} \otimes S^{\beta})$. 
\end{proposition}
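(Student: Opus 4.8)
The plan is to transport the assertion across the Morita equivalence of Dipper and James. Since the functors satisfy $G\circ F=1$, to prove $S(\blam)=G(S^\alpha\otimes S^\beta)$ it is enough to show $F(S(\blam))=S^\alpha\otimes S^\beta$, where the right-hand side is viewed as a module for $\bigoplus_{c=0}^r K\Sg_{c,r-c}$ concentrated in the summand $c=b:=|\alpha|$. Using the natural identification $F(N)\cong N\epsilon=\bigoplus_{c=0}^r Ne_{c,r-c}$, with $Ne_{c,r-c}$ regarded as a right $e_{c,r-c}\W_r e_{c,r-c}\cong K\Sg_{c,r-c}$-module, this amounts to two statements: first, $S(\blam)e_{c,r-c}=0$ whenever $c\neq b$; and second, $S(\blam)e_{b,r-b}\cong S^\alpha\otimes S^\beta$ as right $K\Sg_{b,r-b}$-modules.

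For the vanishing I would restrict everything to the normal abelian subgroup $T=\langle t_1,\dots,t_r\rangle\cong(\Z/2\Z)^r$ and decompose modules into $T$-weight spaces. The quotient $W_r/T\cong\Sg_r$ permutes the $2^r$ characters of $T$ in exactly $r+1$ orbits $\mathcal O_0,\dots,\mathcal O_r$, where $\mathcal O_k$ consists of the characters equal to $-1$ on exactly $k$ generators. On one hand, $S(\blam)$ is supported on $\mathcal O_b$: writing a polytabloid as $e_{\tab}=\sum_{\pi\in C_{\tab}}(\sgn\pi)\{\tab\pi\}$, the sign changes contained in the column group $C_{\tab}$ are precisely those flipping an entry of $\ta^1$, so $e_{\tab}t_i=-e_{\tab}$ when the value $i$ lies in $\ta^1$, while if $i$ lies in $\ta^2$ then $t_i$ is a row permutation of every $\tab\pi$ occurring in $e_{\tab}$ and hence $e_{\tab}t_i=e_{\tab}$; thus $e_{\tab}$ spans a $T$-weight line for a character with exactly $|\lambda^1|=b$ values $-1$. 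On the other hand, the formula $e_{c,r-c}=\tilde z_{r-c,c}^{-1}v_{c,r-c}h_{r-c,c}$ of \cite{DJ92} is designed so that right multiplication by $v_{c,r-c}$ makes $t_{c+1},\dots,t_r$ act as $+1$, right multiplication by $h_{r-c,c}$ makes $t_1,\dots,t_c$ act as $-1$, and $\tilde z_{r-c,c}^{-1}\in K\Sg_{c,r-c}$ fixes this bigrading; hence $Ne_{c,r-c}$ always lies in the $T$-weight space of the single character which is $-1$ exactly on $t_1,\dots,t_c$, a member of $\mathcal O_c$. Since $\mathcal O_b\cap\mathcal O_c=\varnothing$ for $c\neq b$, this forces $S(\blam)e_{c,r-c}=0$.

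For the identification in degree $b$, I would first record the permutation-module version $F(M(\blam))\cong M^\alpha\otimes M^\beta$ in the summand $c=b$, which is essentially \cite{DJ92}: it follows from transitivity of induction applied to $M(\blam)=\Ind_{\W_\blam}^{\W_r}1_{\W_\blam}$ — here $W_\blam$ is the product of the Young subgroup $\Sg_\alpha\le\Sg_b$ with the analogous Young subgroup of $W_{r-b}$ — together with the Mackey formula and the bigrading above. Since $F$ is an exact additive equivalence it identifies the submodule lattices of $M(\blam)$ and $M^\alpha\otimes M^\beta$, and since $S(\blam)\subseteq M(\blam)$ is the submodule spanned by the $\blam$-polytabloids, $F(S(\blam))$ is the submodule of $M^\alpha\otimes M^\beta$ spanned by the images of the polytabloids. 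Taking $\tab$ with $\ta^1$ carrying $1,\dots,b$ and $\ta^2$ carrying $b+1,\dots,r$, so that $e_{\tab}$ lies in the weight space fixed by $e_{b,r-b}$, a computation with the idempotent — in which the column sign-changes inside $k_{\tab}$ are absorbed into the alternating part of $h_{r-b,b}$ while the within-component column permutations and the tabloid class are retained — identifies this image with the $K(\Sg_b\times\Sg_{r-b})$-submodule generated by the ordinary bi-polytabloid $e_{\ta^1}\otimes e_{\ta^2}$, namely $S^\alpha\otimes S^\beta$. Combining the two steps gives $F(S(\blam))=S^\alpha\otimes S^\beta$, and applying $G$ yields $S(\blam)=G(S^\alpha\otimes S^\beta)$.

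I expect the delicate point to be this last identification: the functor $F=-\otimes_{\W_r}\W_r\epsilon$ is opaque, and carrying out the computation with $e_{b,r-b}$ requires the precise definitions of $v_{b,r-b}$, $h_{r-b,b}$ and $\tilde z_{r-b,b}$ from \cite{DJ92} and a careful check that only the ``type-$A$ part'' of a polytabloid survives. An alternative that bypasses this is to work over $\Z[1/2]$, over which $S(\blam)$, the idempotents $e_{c,r-c}$ and hence the functor $F$ are all defined and flat, so that it suffices to treat the case $K=\mathbb Q$; there $W_r$ is semisimple, $S(\blam)$ is the irreducible module $\Ind_{W_b\times W_{r-b}}^{W_r}\bigl(\widetilde{S^{\alpha}}\boxtimes S^{\beta}\bigr)$ (with $\widetilde{S^\alpha}$ the inflation of the $\Sg_b$-Specht module to $W_b$, twisted by the character that is $-1$ on $t_1,\dots,t_b$), and passing to the $T$-weight space fixed by $e_{b,r-b}$ — which is what $F$ does — manifestly returns $S^\alpha\boxtimes S^\beta$; reducing modulo $p$ and using that $F$ commutes with $-\otimes_{\Z[1/2]}K$ then gives the general statement.
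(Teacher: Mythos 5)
The paper does not prove this proposition at all: it is imported verbatim from Dipper--Mathas [\cite{DM02}, Proposition 4.11], so there is no internal argument to compare yours against. What you have written is essentially a reconstruction of the Dipper--James/Dipper--Mathas argument, and its architecture is sound: reducing to $F(S(\blam))=S^{\alpha}\otimes S^{\beta}$ via $G\circ F=1$ is correct, and your weight-space analysis for $T=\langle t_1,\dots,t_r\rangle$ is right given the paper's conventions for $R_{\tab}$ and $C_{\tab}$ (each polytabloid $e_{\tab}$ is a $T$-eigenvector whose character has exactly $|\lambda^{1}|=b$ signs $-1$; note that for the vanishing you only need $e_{c,r-c}$ to lie in $\varepsilon_c\W_r$ for the orbit-sum central idempotent $\varepsilon_c$, which is weaker than the single-character claim you make about $v_{c,r-c}h_{r-c,c}$). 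The genuine gaps are the ones you flag yourself: the eigenvalue behaviour of $v_{c,r-c}$, $h_{r-c,c}$ and $\tilde z_{r-c,c}$ is asserted rather than checked against \cite{DJ92}, and the identification of the degree-$b$ component with $S^{\alpha}\otimes S^{\beta}$ (rather than some conjugate or sign-twisted variant) is precisely where the labelling conventions of \cite{Mo81} and \cite{DJ92} must be reconciled --- that reconciliation is the actual content of [\cite{DM02}, Proposition 4.11]. Your fallback route over $\Z[1/2]$, using semisimplicity over $\mathbb{Q}$ and base change, is the cleanest way to close this and would work, provided you also verify that the integral Specht module and the idempotents are flat over $\Z[1/2]$ and that $F$ commutes with $-\otimes_{\Z[1/2]}K$. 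So: your proposal is a plausible proof sketch of a statement the paper merely cites; as written it is not yet a complete proof, but the missing steps are exactly the ones supplied by the cited reference.
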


The following corollary is an immediate consequence of Proposition \ref{Image of Specht under Morita is the tensor product of Specht}. 


\begin{corollary}\label{Image of Specht with (n,0) under Morita is the tensor product of Specht}
Let $\ch~K \neq 2$ and $0 \leq b\leq r$. Then we have 
\begin{itemize}
\item[(i)] $S\big((r),(0)\big)= G(S^{(b)} \otimes S^{(r-b)})$.
\item[(ii)]$S\big((1^r),(0)\big)= G(S^{(1^b)} \otimes S^{(1^{r-b})})$.
\end{itemize}
\end{corollary}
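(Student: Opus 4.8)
The plan is to derive Corollary \ref{Image of Specht with (n,0) under Morita is the tensor product of Specht} as a direct specialization of Proposition \ref{Image of Specht under Morita is the tensor product of Specht}. The key observation is that the two bi-partitions appearing in the statement, namely $\big((r),(0)\big)$ and $\big((1^r),(0)\big)$, both have first component of size $r$ and empty second component. Under the natural bijection $\Lambda_b \times \Lambda_{r-b} \longleftrightarrow \Lam_b^{+}$, a bi-partition with first component of size $r$ lives in $\Lam_r^{+}$, so to match the hypotheses of Proposition \ref{Image of Specht under Morita is the tensor product of Specht} I must write it as a genuine pair $(\alpha,\beta)$ with $\alpha\in\Lambda_b$, $\beta\in\Lambda_{r-b}$ for \emph{some} $b$. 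But here is where a little care is needed: the statement wants a formula valid for every $0\le b\le r$, yet the bi-partition $\big((r),(0)\big)$ is a single fixed object. The resolution is that the decomposition in \eqref{Morita equivalent of Wn for type C} runs over all $b$, and the functor $G$ is applied to a module over $\bigoplus_{b=0}^r K\Sg_{b,r-b}$; the module $S^{(b)}\otimes S^{(r-b)}$ should be read as sitting in the $b$-th summand, and Proposition \ref{Image of Specht under Morita is the tensor product of Specht} is being invoked in a form where one sums (or rather, identifies the appropriate block) over the admissible splittings.

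Concretely, first I would recall that the Specht module $S^{(r)}$ of $\Sg_r$ is the trivial module and $S^{(1^r)}$ is the sign module. For part (i): the partition $(r)$ of $r$, restricted combinatorially, corresponds under the Littlewood--Richardson / branching philosophy to the pair $\big((b),(r-b)\big)$ of trivial-by-trivial shapes; indeed $(b)\in\Lambda_b$ and $(r-b)\in\Lambda_{r-b}$, and $S^{(b)}\otimes S^{(r-b)}$ is the trivial module of $\Sg_{b,r-b}$. Applying Proposition \ref{Image of Specht under Morita is the tensor product of Specht} with $\alpha=(b)$ and $\beta=(r-b)$ gives $S\big((b)\sqcup\emptyset,\ \emptyset\big)$ — but one must check that the bi-partition built from $\alpha=(b)$, $\beta=(r-b)$ via the prescription that produces $\bolam$ with $|\lambda^1|=b$ actually is $\big((r),(0)\big)$ and not $\big((b),(r-b)\big)$ as a bi-partition. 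This is the one genuine point to pin down: the correspondence in Proposition \ref{Image of Specht under Morita is the tensor product of Specht} is \emph{not} $(\alpha,\beta)\mapsto(\alpha,\beta)$ as bi-partitions in the obvious way for these particular shapes — rather, the second component $\lambda^2$ of the bi-partition governs the "sign-twisted" part sitting inside $W_{r-b}$, and for the permutation/trivial module $M\big((r),(0)\big)=\Ind_{W_r}^{W_r}1$ the relevant $b$ ranges over all of $0,\dots,r$. So the precise statement is that \emph{under the block corresponding to splitting $r=b+(r-b)$}, the image of $S\big((r),(0)\big)$ is $S^{(b)}\otimes S^{(r-b)}$; this is exactly Proposition \ref{Image of Specht under Morita is the tensor product of Specht} applied to $\blam=\big((r),(0)\big)$ once one records which $(\alpha,\beta)$ it pairs with in each block.

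For part (ii), I would argue identically, replacing trivial shapes by single-column shapes: the bi-partition is $\big((1^r),(0)\big)$, and in the block for $r=b+(r-b)$ Proposition \ref{Image of Specht under Morita is the tensor product of Specht} (applied with $\alpha=(1^b)$, $\beta=(1^{r-b})$) yields $G(S^{(1^b)}\otimes S^{(1^{r-b})})$. The only thing to verify is that $\big((1^r),(0)\big)$ does indeed pair with $\big((1^b),(1^{r-b})\big)$ under the bijection — equivalently that its first component $(1^r)$, which has size $r$, forces the splitting to distribute boxes as $(1^b)$ and $(1^{r-b})$; this is immediate from the shape being a single column together with the fact that the first component must have size exactly $b$ in the $b$-th block.

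The main obstacle — really the only substantive step — is making precise the indexing convention in Proposition \ref{Image of Specht under Morita is the tensor product of Specht}: namely, nailing down exactly which pair $(\alpha,\beta)$ the bi-partitions $\big((r),(0)\big)$ and $\big((1^r),(0)\big)$ correspond to in each block $K\Sg_{b,r-b}$ of the Morita-equivalent algebra. Once the convention is spelled out, both parts are one-line specializations: substitute $\alpha=(b),\ \beta=(r-b)$ for (i) and $\alpha=(1^b),\ \beta=(1^{r-b})$ for (ii) into Proposition \ref{Image of Specht under Morita is the tensor product of Specht}, invoke the hypothesis $\ch K\neq 2$ (needed so that the idempotents $e_{b,r-b}$ and hence the Morita equivalence exist), and read off the result. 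No cohomological input is required here; this corollary is purely a bookkeeping consequence of the combinatorial bijection $\Lambda_b\times\Lambda_{r-b}\leftrightarrow\Lam_b^{+}$ and the already-cited Dipper--Mathas identification of Specht modules under $G$.
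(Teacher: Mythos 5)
Your overall route --- specialize Proposition \ref{Image of Specht under Morita is the tensor product of Specht} --- is exactly what the paper does (its entire proof is the sentence ``immediate consequence of Proposition \ref{Image of Specht under Morita is the tensor product of Specht}''). But the step you yourself single out as ``the one genuine point to pin down'' is precisely where your argument does not close. The correspondence in Proposition \ref{Image of Specht under Morita is the tensor product of Specht} \emph{is} the obvious one: the bijection $\Lambda_b\times\Lambda_{r-b}\leftrightarrow\Lam_b^{+}$ sends $(\alpha,\beta)$ to the bi-partition $\blam=(\alpha,\beta)$ with $|\lambda^{1}|=b$. So applying the Proposition with $\alpha=(b)$, $\beta=(r-b)$ proves $S\big((b),(r-b)\big)=G(S^{(b)}\otimes S^{(r-b)})$, not $S\big((r),(0)\big)=G(S^{(b)}\otimes S^{(r-b)})$; these coincide only when $b=r$ (and similarly for part (ii) with single columns). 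Your resolution --- asserting that ``the correspondence is \emph{not} $(\alpha,\beta)\mapsto(\alpha,\beta)$ in the obvious way for these particular shapes'' --- is stated without justification and directly contradicts the Proposition as written. That assertion is the entire content of the corollary for $b<r$, so the proposal has a genuine gap there.

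Moreover, the gap cannot be filled by better bookkeeping: since $G$ is one half of a Morita equivalence, it sends non-isomorphic modules to non-isomorphic modules, and the modules $S^{(b)}\otimes S^{(r-b)}$ for distinct $b$ live in distinct blocks of $\bigoplus_{b}K\Sg_{b,r-b}$ and are pairwise non-isomorphic; hence their images under $G$ cannot all equal the single module $S\big((r),(0)\big)$. What a direct specialization of Proposition \ref{Image of Specht under Morita is the tensor product of Specht} actually yields is the identity in the unique block containing $\big((r),(0)\big)$, namely $b=r$ under the paper's indexing (equivalently, that $F\big(S((r),(0))\big)$ is the trivial module of the block in which it is supported, and vanishes elsewhere). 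Your instinct that something needs to be reconciled here was correct; the honest conclusion is that the ``for every $0\le b\le r$'' form of the statement does not follow from the Proposition, rather than that the indexing convention secretly changes for these shapes.
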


\begin{corollary}\label{Image of dual Specht under Morita is the tensor product of dual Specht}
    Let $\ch~K \neq 2$. If $\blam=(\alpha, \beta) \in \Lam_r$, where $\alpha \in \Lambda_b$ and $\beta \in \Lambda_{r-b}$, then we have $S'(\blam)= G(S_{\alpha} \otimes S_{\beta})$. 
\end{corollary}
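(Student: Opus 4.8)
The plan is to deduce the statement from Proposition \ref{Image of Specht under Morita is the tensor product of Specht} by applying the contravariant $K$-duality $D=\Hom_K(-,K)$. By definition the dual Specht module is $S'(\blam)=D(S(\blam))$, and over the symmetric-group product $\Sg_b\times\Sg_{r-b}$ the dual of an outer tensor product of Specht modules is the outer tensor product of the dual Specht modules, $D(S^{\alpha}\otimes S^{\beta})=S_{\alpha}\otimes S_{\beta}$. Hence, granting that the Morita equivalence of \eqref{Morita equivalent of Wn for type C} intertwines the $K$-dualities on the two sides, i.e. $D\circ G\cong G\circ D$ (equivalently $D\circ F\cong F\circ D$), one gets
\[
S'(\blam)=D\big(S(\blam)\big)=D\big(G(S^{\alpha}\otimes S^{\beta})\big)\cong G\big(D(S^{\alpha}\otimes S^{\beta})\big)=G(S_{\alpha}\otimes S_{\beta}),
\]
which is the claim.

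Thus the real work is in establishing $D\circ G\cong G\circ D$. Here I would use that $\W_r$ and $\epsilon\W_r\epsilon\cong\bigoplus_{b=0}^{r}K\Sg_{b,r-b}$ are symmetric algebras (a group algebra, and a finite product of group algebras), so that $D$ is a contravariant self-equivalence of each module category via the antipode $g\mapsto g^{-1}$. Writing the equivalence in bimodule form $G(M)\cong M\otimes_{\epsilon\W_r\epsilon}\epsilon\W_r$, the tensor–hom adjunction yields $D(G(M))\cong\Hom_{\epsilon\W_r\epsilon}\big(\epsilon\W_r,D(M)\big)$, and since $\epsilon\W_r$ is finitely generated projective as a left $\epsilon\W_r\epsilon$-module this is $\cong D(M)\otimes_{\epsilon\W_r\epsilon}(\epsilon\W_r)^{\vee}$ with $(\epsilon\W_r)^{\vee}=\Hom_{\epsilon\W_r\epsilon}(\epsilon\W_r,\epsilon\W_r\epsilon)$. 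It then remains to identify $(\epsilon\W_r)^{\vee}$ with $\W_r\epsilon$ as a bimodule, up to the canonical side-switchings $\W_r\cong\W_r^{\mathrm{op}}$ and $\epsilon\W_r\epsilon\cong(\epsilon\W_r\epsilon)^{\mathrm{op}}$; this reduces, via the symmetrizing form on $\W_r$, to checking that the idempotent $\epsilon=\sum_b e_{b,r-b}$ of \cite{DJ92} is (equivalent to one) fixed by $g\mapsto g^{-1}$, which one reads off from the explicit description of the $e_{b,r-b}$. Granting this, $D(G(M))\cong D(M)\otimes_{\epsilon\W_r\epsilon}\W_r\epsilon\cong G(D(M))$.

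Alternatively, one can avoid the abstract duality step and argue through Corollary \ref{dual Specht module is isomorphic to tensor product of two specht in any field for hyperoctahedral group}: since $S'(\blam)\cong S(\blam')\otimes S\big((1^r),(0)\big)$ with $\blam'=(\beta',\alpha')$, applying $F$ and using Proposition \ref{Image of Specht under Morita is the tensor product of Specht} together with Corollary \ref{Image of Specht with (n,0) under Morita is the tensor product of Specht}(ii) reduces the claim to a computation inside $\bigoplus_b K\Sg_{b,r-b}$; that computation uses how the twist $-\otimes S\big((1^r),(0)\big)$ acts after transport through $F$ (tensoring each tensor-factor with the sign representation and interchanging the two factors $\Sg_b\leftrightarrow\Sg_{r-b}$) together with the symmetric-group identity $S^{\mu}\otimes\sgn\cong S_{\mu'}$ of [\cite{JaB}, Theorem 6.7]. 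In either route the main obstacle is precisely the interaction of the Dipper–James equivalence with duality, respectively with the sign twist $-\otimes S\big((1^r),(0)\big)$ — the self-duality of the bimodule $\epsilon\W_r$ in the first approach, or the ``interchange of factors'' in the second; once that structural input is secured, everything else is formal.
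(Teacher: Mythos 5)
Your second ("alternative") route is essentially the paper's own proof: the authors invoke Corollary \ref{dual Specht module is isomorphic to tensor product of two specht in any field for hyperoctahedral group} to write the dual Specht module as a Specht module tensored with $S\big((1^r),(0)\big)$, apply Proposition \ref{Image of Specht under Morita is the tensor product of Specht} and Corollary \ref{Image of Specht with (n,0) under Morita is the tensor product of Specht}(ii), assert that $G$ preserves tensor products over $K$, and finish with James' identity $S^{\mu}\otimes\sgn\cong S_{\mu'}$. Your primary route — proving $D\circ G\cong G\circ D$ once and for all via the symmetric-algebra identification $D(\epsilon\W_r)\cong\W_r\epsilon$ and then reading the corollary off from Proposition \ref{Image of Specht under Morita is the tensor product of Specht} — is genuinely different and, if carried out, is the cleaner statement: it explains why the Dipper--James equivalence sends dual Specht modules to dual Specht modules without any appeal to the sign twist. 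The trade-off is that both routes hinge on an unproven compatibility of the Morita bimodule $\epsilon\W_r$ with extra structure: in your first route the self-duality of $\epsilon\W_r$ up to the antipode $g\mapsto g^{-1}$ (your deferred check on $\epsilon=\sum_b e_{b,r-b}$ is real work, since the Dipper--James idempotents $\tilde z_{r-b,b}^{-1}v_{b,r-b}h_{r-b,b}$ are not visibly antipode-invariant); in the second route, and in the paper, the compatibility of $G$ with the twist by the one-dimensional module $S\big((1^r),(0)\big)$, which the paper asserts as ``$G$ preserves tensor products over $K$'' without justification. You are also right to flag the interchange $\Sg_b\leftrightarrow\Sg_{r-b}$ of blocks: conjugation of bipartitions swaps the two components, and the paper's proof silently elides this (it quotes Corollary \ref{dual Specht module is isomorphic to tensor product of two specht in any field for hyperoctahedral group} as $S'(\blam)\cong S(\blam)\otimes\sgn$ rather than $S'(\blam')\cong S(\blam)\otimes\sgn$), so your bookkeeping here is if anything more careful than the published argument.
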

\begin{proof}
By Corollary \ref{dual Specht module is isomorphic to tensor product of two specht in any field for hyperoctahedral group}, we have an isomorphism $S'(\blam)\cong S(\blam) \otimes_K \sgn$. Applying Proposition \ref{Image of Specht under Morita is the tensor product of Specht} together with Corollary \ref{Image of Specht with (n,0) under Morita is the tensor product of Specht} (ii), we obtain
 \[
 S(\blam) \cong G\big(S^{\alpha}\otimes S^{\beta}) \text{ and } S\big((1^r),(0)\big) \cong G(S^{(1^b)}\otimes S^{(1^{r-b})}).
 \]
Since the functor $G$ preserve the tensor products over $K$, it follows that
\[
S'(\blam)= G\big((S^{\alpha} \otimes S^{(1^b)})\otimes ( S^{\beta}\otimes S^{1^{(r-b)}} )\big).
\]
The result then follows from [\cite{JaB}, Theorem 8.15].
\end{proof}
  
The following theorem and propositions extend the results of \cite{HN}.
\begin{theorem}\label{Ext between trivial and dual Specht module is zero for hyperoctahedral group}
      If $\mathrm{char}~K \neq 2,3$, then $\Ext_{\W_r}^1 (k, S'(\blam))=0$, where $\blam$ is a bi-partition of $r$. 
\end{theorem}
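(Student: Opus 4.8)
The plan is to transport the computation through the Dipper–James Morita equivalence \eqref{Morita equivalent of Wn for type C} and reduce to the symmetric-group statement of Hemmer–Nakano \cite{HN}, which asserts that $\Ext^1_{K\Sg_n}(K, S_\mu) = 0$ when $\ch K \neq 2,3$ (here $S_\mu$ denotes the dual Specht module for $\Sg_n$). First I would fix $\blam = (\alpha,\beta)$ with $\alpha \in \Lambda_b$, $\beta \in \Lambda_{r-b}$, so that by Corollary \ref{Image of dual Specht under Morita is the tensor product of dual Specht} we have $S'(\blam) = G(S_\alpha \otimes S_\beta)$ as a module for $\bigoplus_{c=0}^r K\Sg_{c,r-c}$, concentrated in the block indexed by $b$, i.e.\ $S_\alpha \otimes S_\beta$ viewed as a $K\Sg_{b,r-b}$-module and extended by zero elsewhere. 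Since $G$ is an equivalence of categories it is exact and fully faithful, hence induces isomorphisms on all $\Ext$-groups; so it suffices to compute $\Ext^1$ in $\text{\textbf{mod}-}\bigoplus_{c=0}^r K\Sg_{c,r-c}$ between the image of the trivial $\W_r$-module and $S_\alpha \otimes S_\beta$.

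The next step is to identify $F(k)$, the image of the trivial $\W_r$-module under the equivalence. One expects $F(k) = k \otimes_{\W_r} \W_r \epsilon$ to be the direct sum over $b$ of the trivial $K\Sg_{b,r-b}$-modules, i.e.\ $\bigoplus_{b=0}^r (K \otimes K)$ with one summand in each block; this should follow from the explicit description of the idempotents $e_{b,r-b}$ and the fact that $v_{b,r-b}\W_r = e_{b,r-b}\W_r$, together with $e_{b,r-b}\W_r e_{b,r-b} \cong K\Sg_{b,r-b}$ sending the trivial module to the trivial module (a small computation with the idempotents $\tilde z_{r-b,b}^{-1}v_{b,r-b}h_{r-b,b}$, or alternatively one can argue that the trivial module is the unique common constituent forcing this). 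Because $\Ext$ in a direct sum of algebras decomposes blockwise, we then get
\[
\Ext^1_{\W_r}(k, S'(\blam)) \;\cong\; \Ext^1_{K\Sg_{b,r-b}}\big(K \boxtimes K,\; S_\alpha \boxtimes S_\beta\big),
\]
the only surviving term being the one in block $b$.

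The final step is a Künneth-type argument for the external tensor product over the direct product $\Sg_b \times \Sg_{r-b}$. Using that $K \boxtimes K$ is the external tensor of trivial modules and that everything is over a field, one has a short exact sequence (or the Künneth formula for group cohomology of a direct product)
\[
\Ext^1_{K(\Sg_b\times\Sg_{r-b})}(K\boxtimes K, S_\alpha\boxtimes S_\beta)
\;\cong\;
\bigoplus_{i+j=1}\Ext^i_{K\Sg_b}(K,S_\alpha)\otimes_K \Ext^j_{K\Sg_{r-b}}(K,S_\beta).
\]
For $i=j=0$ this contributes $\Hom_{K\Sg_b}(K,S_\alpha)\otimes\Hom_{K\Sg_{r-b}}(K,S_\beta)$; for $(i,j)=(1,0)$ and $(0,1)$ it contributes $\Ext^1(K,S_\alpha)\otimes\Hom(K,S_\beta)$ and $\Hom(K,S_\alpha)\otimes\Ext^1(K,S_\beta)$. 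The two $\Ext^1$ terms vanish by Hemmer–Nakano \cite{HN} since $\ch K \neq 2,3$. The subtle point is the $\Hom\otimes\Hom$ term: $\Hom_{K\Sg_n}(K,S_\mu)$ need not vanish in general (for instance $S_{(1^n)}=\sgn$, and $\Hom(K,S_\mu)$ detects the trivial in the socle of the dual Specht module). I expect the resolution is that one is really computing $\Ext^1$ and not $\Hom$, so the degree-zero piece never appears in $\Ext^1$; more precisely the correct Künneth statement bounds $\Ext^1$ of the tensor by the sum of the two mixed terms $\Ext^1\otimes\Hom \oplus \Hom\otimes\Ext^1$ only, both of which vanish. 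Thus the main obstacle, and the step to write most carefully, is setting up this Künneth/external-tensor argument correctly — in particular justifying that no degree-zero contribution enters $\Ext^1$ — and confirming that $F$ carries $k$ to the blockwise-trivial module so that only the single block $b$ with $|\alpha|=b$ contributes. Everything else is formal, using exactness and full faithfulness of the Morita functors and the cited symmetric-group vanishing theorem.
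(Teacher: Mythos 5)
Your proposal is correct and follows essentially the same route as the paper: identify the trivial module with $S\big((r),(0)\big)$, pass through the Dipper--James Morita equivalence so that $F(k)$ becomes the blockwise trivial module and $F(S'(\blam))$ becomes $S_\alpha\otimes S_\beta$ concentrated in block $b$, apply the K\"unneth formula of Cartan--Eilenberg (which for $\Ext^1$ produces only the two mixed terms $\Ext^1\otimes\Hom$ and $\Hom\otimes\Ext^1$, exactly as you resolve your own worry), and conclude by Hemmer--Nakano's vanishing theorem for $\ch K\neq 2,3$. The paper writes the computation as a sum over all blocks $b$ rather than isolating the single surviving block, but this is only a cosmetic difference.
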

\begin{proof}
Since the trivial module $k$ is isomorphic to $S\big((r),(0)\big)$, it follows from (\ref{Morita equivalent of Wn for type C}) that
\begin{equation*}\label{Ext ist line using Morita for type C}
\Ext_{\W_r}^1\Big(k,~S'(\blam)\Big) \cong \Ext_{\overset{r}{\underset{b=0}{\oplus}}K\Sg_{b} \times K\Sg_{r-b}}^1\Big(F(S\big( (r), (0)\big),~F\big(S'(\blam)\big)\Big).
\end{equation*}
Using Corollary \ref{Image of Specht with (n,0) under Morita is the tensor product of Specht} (i) and Corollary \ref{Image of dual Specht under Morita is the tensor product of dual Specht}, together with the natural isomorphism between the functors $F$ and $G$, we obtain
\begin{align}\label{Ext Kunneth formula for type C}
\Ext_{\overset{r}{\underset{b=0}{\oplus}}K\Sg_{b} \times K\Sg_{r-b}}^1\Big(F(S\big( (r), (0)\big),~F\big(S'(\blam)\big)\Big)&\cong \bigoplus_{b=0}^r \Ext_{K\Sg_{b} \times K\Sg_{r-b}}^1 \Big(S^{(b)} \otimes S^{(r-b)},~S_{\alpha} \otimes S_{\beta} \Big)\nonumber \noindent\\ 
&\cong\bigoplus_{b=0}^r \big[ \big( \Ext_{K\Sg_{b}}^1 (S^{(b)}, S_{\alpha}) \otimes \Hom_{K \Sg_{r-b}} (S^{(r-b)}, S_{\beta}) \big)\nonumber \noindent\\
        & \hspace{1 cm}  \oplus \big(\Hom_{K\Sg_{b}} (S^{(b)}, S_{\alpha}) \otimes \Ext_{K \Sg_{r-b}}^1 (S^{(r-b)}, S_{\beta})\big)\big], 
\end{align}
where the last isomorphism follows from [\cite{CE}, Chapter XI, Theorem 3.1]. By [\cite{HN}, Theorem 3.3.4], when $\ch~K \neq2,3$, we have  $$\Ext_{K\Sg_b}^1(S^{(b)}, S_{\alpha})=0 \text{ and } \Ext_{K\Sg_{r-b}}^1(S^{(r-b)}, S_{\beta})=0. $$ Thus, each summand in (\ref{Ext Kunneth formula for type C}) vanishes, completing the proof.
\end{proof}

The following two propositions were proved for the case $m=2$ in [\cite{RuThesis}, Theorem 10.1.1 and 10.1.3]. While the above proof relied on cellular techniques, our approach is based on the Morita equivalence of $\W_r$ (see Subsection \ref{Morita equivalence for Hecke algebras for type C}). These propositions are instrumental in Section \ref{Cell filtration multiplicity for type C}, where we establish a criteria for the vanishing of the cohomology of cell modules of Brauer algebras of type $C$.

\begin{proposition}\label{Hom between Specht module of hyperoctahedral group for type C}
    Let $\ch~K \neq 2$, and $\blam,\bmu \in \Lam_r$. If $\blam \ntrianglerighteq \bmu$, then $\Hom_{\W_r} \big( S(\blam), S(\bmu)\big)=0$.
\end{proposition}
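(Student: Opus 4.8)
The plan is to push the question through the Morita equivalence of Dipper and James recalled in (\ref{Morita equivalent of Wn for type C}) and reduce it to the classical vanishing theorem for symmetric groups. Since $F$ is an equivalence of categories, $\Hom_{\W_r}\big(S(\blam),S(\bmu)\big)\cong\Hom_{\bigoplus_{c=0}^{r}K\Sg_{c,r-c}}\big(F(S(\blam)),F(S(\bmu))\big)$, and by Proposition~\ref{Image of Specht under Morita is the tensor product of Specht} together with $F\circ G=\mathrm{id}$ one has $F(S(\blam))\cong S^{\alpha}\otimes S^{\beta}$ and $F(S(\bmu))\cong S^{\gamma}\otimes S^{\eta}$, where $\blam=(\alpha,\beta)$ with $\alpha\in\Lambda_{b}$, $\beta\in\Lambda_{r-b}$ and $\bmu=(\gamma,\eta)$ with $\gamma\in\Lambda_{c}$, $\eta\in\Lambda_{r-c}$; here $S^{\alpha}\otimes S^{\beta}$ is supported on the block $K\Sg_{b,r-b}=K\Sg_{b}\otimes K\Sg_{r-b}$ of $\bigoplus_{c}K\Sg_{c,r-c}$, and likewise $S^{\gamma}\otimes S^{\eta}$ on the block indexed by $c$.

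First I would dispose of the case $b\neq c$. From the first family of inequalities defining $\unrhd$ one sees that $\blam\unrhd\bmu$ forces $b=|\lambda^{1}|\geq|\mu^{1}|=c$; in particular every $\blam\ntrianglerighteq\bmu$ with $b<c$ is covered, but in fact whenever $b\neq c$ the modules $F(S(\blam))$ and $F(S(\bmu))$ live on different blocks of $\bigoplus_{c}K\Sg_{c,r-c}$, so the $\Hom$ vanishes outright and there is nothing more to prove.

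Then I would treat the essential case $b=c$. Here the two families of inequalities defining $\unrhd$ decouple: because $|\lambda^{1}|=|\mu^{1}|$, the condition $\blam\unrhd\bmu$ becomes equivalent to ``$\alpha\unrhd\gamma$ in $\Lambda_{b}$ and $\beta\unrhd\eta$ in $\Lambda_{r-b}$'', so the hypothesis $\blam\ntrianglerighteq\bmu$ says that $\alpha\ntrianglerighteq\gamma$ or $\beta\ntrianglerighteq\eta$. Applying the degree-zero Künneth identity ([\cite{CE}, Chapter XI, Theorem 3.1]), exactly as in the proof of Theorem~\ref{Ext between trivial and dual Specht module is zero for hyperoctahedral group}, gives
$$\Hom_{\W_r}\big(S(\blam),S(\bmu)\big)\;\cong\;\Hom_{K\Sg_{b}}\big(S^{\alpha},S^{\gamma}\big)\otimes_{K}\Hom_{K\Sg_{r-b}}\big(S^{\beta},S^{\eta}\big).$$
Now I would invoke the classical fact for symmetric groups that $\Hom_{K\Sg_{n}}(S^{\xi},M^{\zeta})\neq 0$ implies $\xi\unrhd\zeta$, together with the embedding $S^{\zeta}\hookrightarrow M^{\zeta}$ and left-exactness of $\Hom$, to conclude that $\Hom_{K\Sg_{n}}(S^{\xi},S^{\zeta})\neq 0$ implies $\xi\unrhd\zeta$ (see \cite{JaB}). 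Thus, if $\alpha\ntrianglerighteq\gamma$ the first tensor factor on the right-hand side is zero, and if $\beta\ntrianglerighteq\eta$ the second is; in either case the whole expression vanishes, which finishes the argument.

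I expect the only points needing genuine care to be the bookkeeping in the decoupling step — checking that the inequalities defining $\unrhd$ separate the two symmetric-group ``coordinates'' precisely when $|\lambda^{1}|=|\mu^{1}|$, and otherwise separate blocks — and making sure the normalization in Proposition~\ref{Image of Specht under Morita is the tensor product of Specht} really produces Specht modules (rather than their duals) on the symmetric-group side, so that the classical theorem applies as stated; beyond this I do not anticipate a substantial obstacle.
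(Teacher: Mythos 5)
Your proposal is correct and follows essentially the same route as the paper: reduce via the Dipper--James Morita equivalence to $\bigoplus_b K\Sg_{b,r-b}$, split the $\Hom$-space over the tensor product, and invoke the symmetric-group vanishing result (the paper cites [\cite{HN}, Lemma 4.1.1(i)] and [\cite{Xi00}, Lemma 3.2] where you cite James and the K\"unneth identity, but these are the same facts). If anything, your treatment is slightly more careful than the paper's: you explicitly dispose of the case $|\lambda^{1}|\neq|\mu^{1}|$ by a block argument and verify that the dominance order on bi-partitions decouples componentwise when $|\lambda^{1}|=|\mu^{1}|$, whereas the paper tacitly takes $\alpha,\gamma\in\Lambda_b$ for a common $b$.
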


\begin{proof}
If $\ch~K \neq 2$ and $\blam , \bmu \in \Lam_r$, then by Proposition \ref{Image of Specht under Morita is the tensor product of Specht}, we have 
\[
S(\blam)= G(S^{\alpha} \otimes S^{\beta}) \text{ and } S(\bmu)= G(S^{\gamma}\otimes S^{\nu}),
\] 
where $ \alpha , \gamma \in \Lambda_b$ and $\beta , \nu \in \Lambda_{r-b}$. Applying the Morita equivalence given in (\ref{Morita equivalent of Wn for type C}), we obtain the following isomorphism 
\begin{align*}
 \Hom_{\W_r} \big( S(\blam), S(\bmu)\big)&\cong\bigoplus_{b=0}^r \Hom_{K\Sg_b \times K\Sg_{r-b}} \big( S^{\alpha} \otimes S^{\beta}, S^{\gamma} \otimes S^{\nu}\big)\\
        &\cong \bigoplus_{b=0}^r \Big(\Hom_{K\Sg_b} ( S^{\alpha}, S^{\gamma})\otimes  \Hom_{ K\Sg_{r-b}} (S^{\beta},  S^{\nu} ) \Big),
\end{align*}
where the last isomorphism follows from [\cite{Xi00}, Lemma 3.2]. Since $\blam \ntrianglerighteq \bmu$ implies that at least $\alpha \ntrianglerighteq \gamma$ or $\beta \ntrianglerighteq \nu$, we conclude from [\cite{HN}, Lemma 4.1.1 (i)] (under the assumption $\ch~K \neq 2$) that
\[
\Hom_{K\Sg_b} ( S^{\alpha}, S^{\gamma})=0 \text{ and }\Hom_{ K\Sg_{r-b}} (S^{\beta},  S^{\nu} )=0.
\]
This completes the proof.
\end{proof}
\begin{proposition}\label{Ext between Specht module of hyperoctahedral group for type C}
    Let $\ch~K \neq 2,3$ and $\blam,\bmu \in \Lam_r$. If $\blam \ntriangleright \bmu$, then $\Ext_{\W_r}^1\big( S(\blam), S(\bmu)\big)=0$.
\end{proposition}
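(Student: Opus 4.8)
The plan is to mimic exactly the strategy used in the proof of Proposition \ref{Hom between Specht module of hyperoctahedral group for type C}, but now at the level of $\Ext^1$ instead of $\Hom$, pushing everything through the Morita equivalence (\ref{Morita equivalent of Wn for type C}) and the K\"unneth formula, and then invoking the corresponding symmetric-group result from \cite{HN} which requires $\ch K\neq 2,3$. First I would write $\blam=(\alpha,\beta)$ and $\bmu=(\gamma,\nu)$ with $\alpha,\gamma\in\Lambda_b$ and $\beta,\nu\in\Lambda_{r-b}$ for the appropriate $b$ (noting that $\Hom$ and $\Ext^1$ between $S(\blam)$ and $S(\bmu)$ vanish automatically unless $|\lambda^1|=|\mu^1|$, since the Morita image decomposes over the blocks $K\Sg_{b,r-b}$), and use Proposition \ref{Image of Specht under Morita is the tensor product of Specht} to write $S(\blam)=G(S^{\alpha}\otimes S^{\beta})$ and $S(\bmu)=G(S^{\gamma}\otimes S^{\nu})$.

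Since $G$ (equivalently $F$) is an equivalence of categories, it is exact and induces isomorphisms on all $\Ext$-groups. Hence
\[
\Ext^1_{\W_r}\big(S(\blam),S(\bmu)\big)\cong\bigoplus_{b=0}^{r}\Ext^1_{K\Sg_b\times K\Sg_{r-b}}\big(S^{\alpha}\otimes S^{\beta},\,S^{\gamma}\otimes S^{\nu}\big).
\]
Then I would apply the K\"unneth formula for $\Ext$ over a tensor product of algebras (\cite{CE}, Chapter XI, Theorem 3.1), exactly as in the display (\ref{Ext Kunneth formula for type C}), to get for each $b$
\begin{align*}
\Ext^1_{K\Sg_b\times K\Sg_{r-b}}\big(S^{\alpha}\otimes S^{\beta},\,S^{\gamma}\otimes S^{\nu}\big)
&\cong\big(\Ext^1_{K\Sg_b}(S^{\alpha},S^{\gamma})\otimes\Hom_{K\Sg_{r-b}}(S^{\beta},S^{\nu})\big)\\
&\quad\oplus\big(\Hom_{K\Sg_b}(S^{\alpha},S^{\gamma})\otimes\Ext^1_{K\Sg_{r-b}}(S^{\beta},S^{\nu})\big).
\end{align*}

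The combinatorial heart of the argument is the observation that $\blam\ntriangleright\bmu$ forces, for every valid splitting, at least one of the pairs to fail the strict or non-strict dominance comparison on the symmetric-group side. More precisely, if $\blam\ntrianglerighteq\bmu$ then (as in Proposition \ref{Hom between Specht module of hyperoctahedral group for type C}) either $\alpha\ntrianglerighteq\gamma$ or $\beta\ntrianglerighteq\nu$, and in either case the corresponding $\Hom$ and $\Ext^1$ over the symmetric group both vanish by \cite{HN} (Lemma 4.1.1 and Theorem 3.3.4, using $\ch K\neq 2$ resp. $\ch K\neq 2,3$), so both summands die. The remaining case is $\blam=\bmu$, where I would note $\Ext^1_{\W_r}(S(\blam),S(\blam))\cong\Ext^1_{K\Sg_b}(S^{\alpha},S^{\alpha})\otimes\Hom_{K\Sg_{r-b}}(S^{\beta},S^{\beta})\oplus\Hom_{K\Sg_b}(S^{\alpha},S^{\alpha})\otimes\Ext^1_{K\Sg_{r-b}}(S^{\beta},S^{\beta})$, and both $\Ext^1_{K\Sg_b}(S^{\alpha},S^{\alpha})$ and $\Ext^1_{K\Sg_{r-b}}(S^{\beta},S^{\beta})$ vanish by \cite{HN} (Theorem 3.3.4, the self-extension case $\Ext^1(S^{\alpha},S^{\alpha})=0$ when $\ch K\neq 2,3$). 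Thus every summand vanishes and the proposition follows.

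The main obstacle I anticipate is bookkeeping in the non-strict case: one must be careful that $\blam\ntriangleright\bmu$ together with $\blam\neq\bmu$ really does give $\blam\ntrianglerighteq\bmu$, and then translate this cleanly into a statement about the symmetric-group components $\alpha,\gamma,\beta,\nu$ — exactly the same translation used in Proposition \ref{Hom between Specht module of hyperoctahedral group for type C}. A secondary subtlety is confirming that the relevant symmetric-group $\Ext^1$ inputs from \cite{HN} are stated in the generality needed here (arbitrary $S^{\alpha}\to S^{\gamma}$, not just $S^{(b)}\to S_{\alpha}$), and keeping track of the exact characteristic hypotheses: $\ch K\neq 2$ is needed merely for the Morita equivalence and for the $\Hom$-vanishing, while $\ch K\neq 3$ enters only through the symmetric-group $\Ext^1$-vanishing results of Hemmer–Nakano.
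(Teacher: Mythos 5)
Your proposal is correct and follows essentially the same route as the paper's proof: push both Specht modules through the Morita equivalence (\ref{Morita equivalent of Wn for type C}) via Proposition \ref{Image of Specht under Morita is the tensor product of Specht}, apply the K\"unneth formula of [\cite{CE}, Chapter XI, Theorem 3.1], and kill each summand using the Hemmer--Nakano vanishing results for symmetric groups. If anything, your case analysis is more careful than the paper's one-line deduction: you correctly observe that when only one of $\alpha\ntrianglerighteq\gamma$ or $\beta\ntrianglerighteq\nu$ holds one must also invoke the $\Hom$-vanishing (not just the $\Ext^1$-vanishing) to kill both K\"unneth summands, and you treat the self-extension case $\blam=\bmu$ separately; the paper instead cites [\cite{HN}, Lemma 4.2.1] for all of this at once.
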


\begin{proof}
Since $\ch~K \neq 2,3$ and $\blam,\bmu \in \Lam_r$, we apply the Morita equivalence from (\ref{Morita equivalent of Wn for type C}) to obtain 
\[
 \Ext_{\W_r}^1 \big( S(\blam), S(\bmu)\big)\cong \bigoplus_{b=0}^r \Ext_{K\Sg_b \times K\Sg_{r-b}}^1 \big( S^{\alpha} \otimes S^{\beta}, S^{\gamma} \otimes S^{\nu}\big).
\]
Here $S(\blam) \cong G(S^{\alpha} \otimes S^{\beta})$ and $S(\bmu) \cong G(S^{\gamma} \otimes S^{\nu})$ for some $ \alpha, \gamma \in \Lambda_b$ and $\beta , \nu \in \Lambda_{r-b}$, as given by Proposition \ref{Image of Specht under Morita is the tensor product of Specht}. Using [\cite{CE}, Chapter XI, Theorem 3.1], we have  
\begin{align*}
\Ext_{K\Sg_b \times K\Sg_{r-b}}^1 ( S^{\alpha} \otimes S^{\beta}, S^{\gamma} \otimes S^{\nu}) \cong &\bigoplus_{b=0}^r \big[\big(\Ext_{K\Sg_b}^1 \big( S^{\alpha}, S^{\gamma}) \otimes  \Hom_{ K\Sg_{r-b}} (S^{\beta},  S^{\nu} ) \big) \\
        & \hspace{1cm} \oplus \big(\Hom_{K\Sg_b} \big( S^{\alpha}, S^{\gamma}) \otimes \Ext_{ K\Sg_{r-b}}^1 (S^{\beta},  S^{\nu} ) \big)\big].
\end{align*}
Since $\blam \ntriangleright \bmu$, it follows that at least $\alpha \ntriangleright \gamma$ or $\beta \ntriangleright \nu$. By [\cite{HN}, Lemma 4.2.1], we conclude that 
\[
\Ext_{K\Sg_b}^1 \big( S^{\alpha}, S^{\gamma})=0 \text{ and } \Ext_{ K\Sg_{r-b}}^1 (S^{\beta},  S^{\nu} )=0.
\]
Thus, the proof is complete. 
\end{proof}

\section{Brauer algebras of type $C$} \label{Brauer algebras of type C for type C}

The Brauer algebra of type $C$ was first introduced by Cohen, Liu, and Yu in \cite{CLY}. It arises as a subalgebra of Brauer algebra of type $A_{2r-1}$, consisting specifically of those elements that remain invariant under a diagram automorphism of the Brauer algebra of type $A_{2r-1}$ (see \cite{Bo}).

Throughout this article, we assume that $K$ to be an algebraically closed field of characteristic $p$, which contains a distinguished element $\delta$. For a given $r \in \mathbb{N}$, a \textit{Brauer diagram of type $C$} is a two-row diagram consisting of $2r$ vertices, arranged along the top and bottom rows. The vertices are indexed from left to right as $-r, -(r-1), \cdots, -1, 1, \cdots, r-1, r$. Each pair of vertices is connected by either a vertical edge or a horizontal edge. These diagrams exhibit a mirror symmetry with respect to the vertical axis between the vertices labeled by $-1$ and $1$. In other words, every horizontal and vertical edge in the diagram has a corresponding reflected edge across the axis. One such diagram is shown in Figure \ref{Brauer diagram of type C}. 
\begin{center}
\begin{figure}[H]
\begin{tikzpicture}[x=0.65cm,y=0.65cm]
\draw[-] (0,0)-- (3,-1);
\draw[-] (1,0)-- (0,-1);
\draw[-] (2,0).. controls(3.5,-0.5)..(5,0);
\draw[-] (3,0).. controls(4.5,-0.5)..(6,0);
\draw[-] (4,0.5)-- (4,-1.5);
\draw[-] (7,0)-- (8,-1);
\draw[-] (8,0)-- (5,-1);
\draw[-] (1,-1).. controls(4,-0.5)..(7,-1);
\draw[-] (2,-1).. controls(4,-0.75)..(6,-1);
\node at (0,0.5) {\tiny $-4$};
\node at (1,0.5) {\tiny $-3$};
\node at (2,0.5) {\tiny $-2$};
\node at (3,0.5) {\tiny $-1$};
\node at (5,0.5) {\tiny $1$};
\node at (6,0.5) {\tiny $2$};
\node at (7,0.5) {\tiny $3$};
\node at (8,0.5) {\tiny $4$};
\end{tikzpicture}
\caption{Brauer diagram of type $C$ in $\B(C_4, \delta)$.} \label{Brauer diagram of type C}
\end{figure}
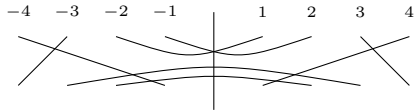
\end{center}
The Brauer algebra of type $C$, denoted by $\B(C_r,\delta)$, 
is an associative unital $K$-algebra with a basis consisting of all Brauer diagrams of type $C$. This algebra is generated by $\{s_i: i=0, \cdots, r-1\}$ and $\{ e_i: i=0,\cdots, r-1\}$, subject to the relations defined in [\cite{CLY}, Definition 2.1]. The diagrammatic description of the generators of $\B(C_r, \delta)$ is given in Figure \ref{Generators of Brauer diagram of type C}. Note that the subalgebra generated by $e_i$ for $i=0,\cdots,r-1$, is isomorphic to the Temperley–Lieb algebra of type $B_r$, as defined by Dieck in \cite{Dieck}. Similarly, the subalgebra generated by $\{s_i: i=0, 1, \cdots, r-1\}$ is isomorphic to the group algebra of the hyperoctahedral group $\W_r$. The multiplication of any two Brauer diagrams of type $C$ follows the same rules as the multiplication of two Brauer diagrams.
\begin{center}
\begin{figure}[H]
\begin{tikzpicture}[x=0.65cm,y=0.65cm]
\draw[-] (0,0)-- (0,-1);
\draw[-] (2,0)-- (3,-1);
\draw[-] (3,0)-- (2,-1);
\draw[-] (5,0)-- (5,-1);
\draw[-] (6,0.5)-- (6,-1.5);
\draw[-] (7,0)-- (7,-1);
\draw[-] (9,0)-- (10,-1);
\draw[-] (10,0)-- (9,-1);
\draw[-] (12,0)-- (12,-1);
\node at (-0.75,-0.5) {$s_i=$};
\node at (0,0.5) {\tiny $-r$};
\node at (1,-0.5) {\tiny $\cdots$};
\node at (2,0.5) {\tiny $-(i+1)$};
\node at (3,0.5) {\tiny $-i$};
\node at (4,-0.5) {\tiny $\cdots$};
\node at (5,0.5) {\tiny $-1$};
\node at (7,0.5) {\tiny $1$};
\node at (8,-0.5) {\tiny $\cdots$};
\node at (9,0.5) {\tiny $i$};
\node at (10,0.5) {\tiny $i+1$};
\node at (11,-0.5) {\tiny $\cdots$};
\node at (12,0.5) {\tiny $r$};
\node at (12.5,-0.5) {;};
\end{tikzpicture}
\begin{tikzpicture}[x=0.65cm,y=0.65cm]
\draw[-] (0,0)-- (0,-1);
\draw[-] (2,0)-- (2,-1);
\draw[-] (3,0)-- (5,-1);
\draw[-] (4,0.5)-- (4,-1.5);
\draw[-] (5,0)-- (3,-1);
\draw[-] (6,0)-- (6,-1);
\draw[-] (8,0)-- (8,-1);
\node at (-0.75,-0.5) {$s_0=$};
\node at (0,0.5) {\tiny $-r$};
\node at (1,-0.5) {\tiny $\cdots$};
\node at (2,0.5) {\tiny $-2$};
\node at (3,0.5) {\tiny $-1$};
\node at (5,0.5) {\tiny $1$};
\node at (6,0.5) {\tiny $2$};
\node at (7,-0.5) {\tiny $\cdots$};
\node at (8,0.5) {\tiny $r$};
\end{tikzpicture}
\begin{tikzpicture}[x=0.65cm,y=0.65cm]
\draw[-] (0,0)-- (0,-1);
\draw[-] (2,0).. controls(2.5,-0.5).. (3,0);
\draw[-] (2,-1).. controls(2.5,-0.5).. (3,-1);
\draw[-] (5,0)-- (5,-1);
\draw[-] (6,0.5)-- (6,-1.5);
\draw[-] (7,0)-- (7,-1);
\draw[-] (9,0).. controls(9.5,-0.5).. (10,0);
\draw[-] (9,-1).. controls(9.5,-0.5).. (10,-1);
\draw[-] (12,0)-- (12,-1);
\node at (-0.75,-0.5) {$e_i=$};
\node at (0,0.5) {\tiny $-r$};
\node at (1,-0.5) {\tiny $\cdots$};
\node at (2,0.5) {\tiny $-(i+1)$};
\node at (3,0.5) {\tiny $-i$};
\node at (4,-0.5) {\tiny $\cdots$};
\node at (5,0.5) {\tiny $-1$};
\node at (7,0.5) {\tiny $1$};
\node at (8,-0.5) {\tiny $\cdots$};
\node at (9,0.5) {\tiny $i$};
\node at (10,0.5) {\tiny $i+1$};
\node at (11,-0.5) {\tiny $\cdots$};
\node at (12,0.5) {\tiny $r$};
\node at (12.5,-0.5) {;};
\end{tikzpicture}
\begin{tikzpicture}[x=0.65cm,y=0.65cm]
\draw[-] (0,0)-- (0,-1);
\draw[-] (2,0)-- (2,-1);
\draw[-] (3,0).. controls(4,-0.5).. (5,0);
\draw[-] (4,0.5)-- (4,-1.5);
\draw[-] (3,-1).. controls(4,-0.5).. (5,-1);
\draw[-] (6,0)-- (6,-1);
\draw[-] (8,0)-- (8,-1);
\node at (-0.75,-0.5) {$e_0=$};
\node at (0,0.5) {\tiny $-r$};
\node at (1,-0.5) {\tiny $\cdots$};
\node at (2,0.5) {\tiny $-2$};
\node at (3,0.5) {\tiny $-1$};
\node at (5,0.5) {\tiny $1$};
\node at (6,0.5) {\tiny $2$};
\node at (7,-0.5) {\tiny $\cdots$};
\node at (8,0.5) {\tiny $r$};
\end{tikzpicture}
\caption{Generators of the Brauer algebra of type $C$, where $i=\{1, \cdots, r-1\}$.} \label{Generators of Brauer diagram of type C}
\end{figure}
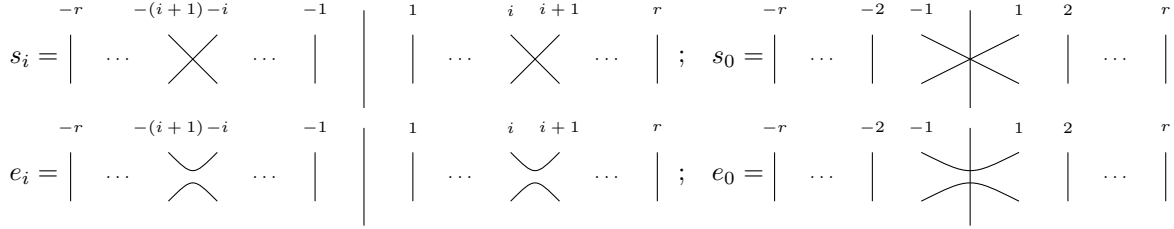
\end{center}

\subsection{Cellular structure} 
 The cellular structure of $\B(C_r, \delta)$ was studied in \cite{CLY}, following Graham and Lehrer \cite{GL}, whereas, our approach adopts the framework of Koenig and Xi in \cite{KX}. 
 
 Let $0\leq l \leq r$. An \textit{$(r,l)$-dangle} is a partition of $\{-r, \cdots, -1,1,\cdots, r\}$ into $2r-2l$ one-element subsets and $l$ two-element subsets. There is a natural action of the Dynkin diagram automorphism on the set of all $(r,l)$-dangles induced by permuting the nodes, which corresponds to reflection across the vertical axis of the diagram. Geometrically, each $(r,l)$-dangle can be viewed as a partial diagram with $2r$ vertices, consisting of $l$ horizontal edges and $2r-2l$ free vertices. A $(r,l)$-dangle is called \textit{symmetric} if all horizontal edges are mirrored across the vertical axis. Let $V_l$ denote the $K$-vector space spanned by all symmetric $(r,l)$-dangles. 
There exists a natural anti-automorphism $i$ on $\B(C_r, \delta)$ with $i^2=\mathrm{id}$, defined by reflecting diagrams across the horizontal axis that bisects the diagram into upper and lower halves. Thus, the involution $i$ on $\B(C_r,\delta)$ is given by $i(e \otimes f \otimes \bosig)= (f \otimes e \otimes \bosig^{-1})$, for $e, f \in V_l$, $\bosig \in \W_{r-2l}.$

\begin{proposition}\label{iterated inflation of Brauer algebras of type C for type C}[\cite{Bo}, Theorem 3.6]
    The Brauer algebra of type $C$ can be expressed as an iterated inflation of the group algebra of hyperoctahedral groups 
    $$ \B(C_r, \delta)= \bigoplus_{l=0}^{r} V_l \otimes V_l \otimes \W_{r-l},$$
    where  $l \in \{0, \cdots, r\}$.
\end{proposition}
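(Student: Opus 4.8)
The plan is to establish the iterated inflation structure of $\B(C_r,\delta)$ by following the Koenig--Xi framework, exactly as was done for the classical Brauer algebra. The starting point is the observation that every Brauer diagram of type $C$ is determined by three pieces of data: the symmetric configuration of horizontal edges in the top row (a symmetric $(r,l)$-dangle, i.e., an element of a basis of $V_l$), the symmetric configuration of horizontal edges in the bottom row (another element of $V_l$), and the way the $2(r-l)$ remaining through-strands connect the top to the bottom. Because the whole diagram is required to be mirror-symmetric across the reflecting axis, the through-strands are themselves symmetric, so they are governed by an element of the hyperoctahedral group $W_{r-l}$ rather than a full symmetric group. This gives, at the level of $K$-vector spaces, the decomposition $\B(C_r,\delta)=\bigoplus_{l=0}^r V_l\otimes V_l\otimes \W_{r-l}$; one checks the dimension count matches the number of symmetric Brauer diagrams of type $C$.

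Next I would verify the axioms of an iterated inflation (in the sense of \cite{KX}): for each $l$, one needs a bilinear form and a compatible multiplication rule. Concretely, multiplying two diagrams $(e_1\otimes f_1\otimes \bosig_1)$ and $(e_2\otimes f_2\otimes \bosig_2)$ and reading off the part lying in the "layer $l$" stratum, one obtains an expression of the form $e_1\otimes f_2\otimes (\text{something in }\W_{r-l})$ multiplied by a scalar arising from closed loops, times a bilinear pairing $\langle f_1,e_2\rangle$ that records how the bottom dangle of the first diagram meshes with the top dangle of the second (with the usual $\delta$-power for each removed loop). One must check that this pairing takes values in $\W_{r-l}$-bimodules appropriately and that the resulting associative structure, together with lower-layer terms, reproduces diagram multiplication; the anti-involution $i$ described just before the statement, $i(e\otimes f\otimes\bosig)=(f\otimes e\otimes\bosig^{-1})$, is manifestly compatible because reflecting a symmetric diagram across the horizontal axis swaps top and bottom dangles and inverts the through-permutation. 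Since \cite{Bo}, Theorem 3.6 is cited directly, the honest proof is to invoke that reference; but the self-contained argument is the diagrammatic bookkeeping just outlined, parallel to Koenig--Xi's treatment of $\B_r(\delta)$, with the single modification that symmetry cuts the symmetric-group layer down to the hyperoctahedral group.

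The main obstacle is making precise and checking the multiplication/inflation axioms in the symmetric setting: one has to be careful that when two symmetric diagrams are composed, the loops created come in mirror pairs (so the power of $\delta$ that appears is genuinely an integer power and the decomposition is closed), and that the "leftover" through-connection, after resolving all caps and cups, is again a \emph{symmetric} permutation, i.e., lands in $W_{r-l}$ and not merely in $\Sg_{2(r-l)}$. This is exactly the point where type $C$ differs from type $A$, and it requires tracking the reflecting axis carefully through the composition. Once that is in hand, the remaining verifications (that $V_l\otimes V_l\otimes\W_{r-l}$ for $l'\geq l$ forms a two-sided ideal, that the inflation data are $i$-invariant, and that one recovers a basis) are routine bookkeeping, and the cellular structure of $\B(C_r,\delta)$ then follows from the cellularity of each $\W_{r-l}$ together with the general Koenig--Xi machinery on iterated inflations of cellular algebras.
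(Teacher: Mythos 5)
The paper offers no proof of this proposition at all—it is imported verbatim from Bowman's Theorem 3.6—and your proposal correctly identifies that citing that reference is the intended justification, while your diagrammatic outline (top symmetric dangle, bottom symmetric dangle, symmetric through-permutation landing in $W_{r-l}$, then the Koenig--Xi inflation axioms) is precisely the argument underlying that theorem. One caution for the self-contained version: a loop created during composition that crosses the reflecting axis is its own mirror image rather than one of a mirror pair, so the $\delta$-bookkeeping must treat such loops separately from paired ones, as the normalization $\delta^{-l}$ in the idempotent $e_l$ already suggests.
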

The algebra $\W_{r-l}$ is cellular by [\cite{GL}, Theorem 5.5]. By Proposition \ref{iterated inflation of Brauer algebras of type C for type C} and [\cite{KX3}, Proposition 3.5], it follows that $\B(C_r, \delta)$ is  cellular. For each $l \in\{0, \cdots, r\}$, define $J_l$ to be the two-sided ideal of $\B(C_r, \delta)$ generated by all diagrams having at least $l$ horizontal edges. Therefore, we have a filtration of $\B(C_r,\delta)$: $$ \B(C_r, \delta) =J_0 \supseteq J_1 \supseteq \cdots \supseteq J_r\supseteq 0,$$ where each subquotient is isomorphic to an inflation $V_l \otimes V_l \otimes \W_{r-l}$ of $\W_{r-l}$ along $V_l$, as follows from [\cite{Bo}, Lemma 3.3]. 

For $0 \leq l \leq r$ and $ \delta \neq 0$, the idempotent $e_l$ was given by Bowman in \cite{Bo}. This idempotent is depicted in Figure \ref{Idempotent of Type C when delta non-zero}.  The idempotent $e_0$ is defined to be identity element of $\W_{r}$ in $\B(C_r,\delta)$.
\begin{center}
\begin{figure}[H]
\begin{tikzpicture}[x=1cm,y=0.65cm]
\draw[-] (-2,0)-- (-2,-1);
\draw[-] (0,0)-- (0,-1);
\draw[-] (1,0).. controls(4,-0.5)..(7,0);
\draw[-] (3,0).. controls(4,-0.25)..(5,0);
\draw[-] (4,0.5)-- (4,-1.5);
\draw[-] (8,0)-- (8,-1);
\draw[-] (10,0)-- (10,-1);
\draw[-] (1,-1).. controls(4,-0.5)..(7,-1);
\draw[-] (3,-1).. controls(4,-0.75)..(5,-1);
\node at (-3,-0.5) {$e_l=\frac{1}{\delta^{l}}$};
\node at (-2,0.5) {\tiny $-r$};
\node at (-1,-0.5) {\tiny $\cdots$};
\node at (0,0.5) {\tiny $-(l+1)$};
\node at (1,0.5) {\tiny $-l$};
\node at (2,-0.5) {\tiny $\cdots$};
\node at (3,0.5) {\tiny $-1$};
\node at (5,0.5) {\tiny $1$};
\node at (6,-0.5) {\tiny $\cdots$};
\node at (7,0.5) {\tiny $l$};
\node at (8,0.5) {\tiny $l+1$};
\node at (9,-0.5) {\tiny $\cdots$};
\node at (10,0.5) {\tiny $r$};
\end{tikzpicture}
\caption{Idempotent of $\B(C_r, \delta)$ for $\delta \neq 0$.}\label{Idempotent of Type C when delta non-zero}
\end{figure}
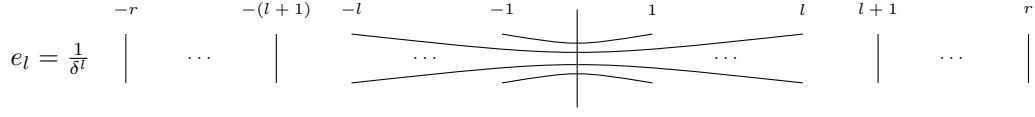
\end{center}

Finally, it follows from [\cite{Bo}, Theorem 3.6] that if $\delta \neq 0$, then $\B(C_r,\delta)$ is cellularly stratified (for more details, see [\cite{HHKP}, Definition 2.1]). 


\subsection{Permutation modules of $\B(C_r,\delta)$} 
In this subsection, our main aim is to define the permutation modules of $\B(C_r,\delta)$. To this end, we first set up the induction and restriction functors, followed by the split pair approach, as discussed in \cite{CGsplit}. From now on, we denote the algebra $\B(C_r, \delta)$ simply by $B$. For $0 \leq l \leq r$, we consider $\text{\textbf{mod}-}\W_{r-l}$ (resp. $\text{\textbf{mod}-}B$) to be the category of finitely generated right $\W_{r-l}$-modules (resp. $B$-modules). 
The exact split pair functors are defined as follows:
\begin{align*}
  \ind_l:\text{\textbf{mod}-}& \W_{r-l}  \longrightarrow  \text{\textbf{mod}-} B &\Res_l :\text{\textbf{mod}-} B \longrightarrow \text{\textbf{mod}-}\W_{r-l} \\
  & M \longmapsto  M \otimes_{\substack{\W_{r-l}}} e_l(B/J_{l+1})   &N \longmapsto  N\otimes_{B} Be_l.
\end{align*}
The functor $\ind_l$ operates within the $l$th layer, whereas $\Res_l$ retains all layers $m$, for $m \geq l$. We also define another induction functor that keeps all the layers $m \geq l$
\begin{align*}
  \Ind_l:\text{\textbf{mod}-} \W_{r-l}  &\longrightarrow  \text{\textbf{mod}-} B \\
  M &\longmapsto  M \otimes_{\substack{\W_{r-l}}} e_lB. 
\end{align*}
For each $0 \leq l \leq r$, the functor $\Res_l$ is right adjoint to $\Ind_l$. Furthermore, $\Res_l$ is a left inverse to $\ind_l$, however, in general, it is not a left inverse to $\Ind_l$.

Define the index set $\Lambda$ by  $\Lambda=\{(l, \blam): 0 \leq l \leq r, \blam \in \Lam_{r-l}\}$. For $(l, \blam),(m,\bmu)\in \Lambda$, we impose a partial order given by: 
\begin{equation*}
    (m,\bmu) \geq (l,\blam) \text{ if } m < l  (l=m \text{ and } \blam \trianglerighteq \bmu).
\end{equation*}

For $M \in \text{\textbf{mod}-} B$, define  its dual by $M^{*}=\Hom_{K}(M, K)$, where $K$ is the base field. The right action of an element $d \in B$ on  $f \in M^{*}$ is given by $(f\cdot d)(m)=f(m\cdot i(d))$, for all $m \in M$. This gives a right $B$-module structure on $M^{*}$. Since the functor $\ind_l$ is exact, the cell modules of $B$ can be obtained as the images of the cell modules of $\W_{r-l}$ under $\ind_l$. Accordingly 
 we denote the cell module of $B$ by $\ind_l S'(\blam)$, where $(l,\blam) \in \Lambda$ and $S'(\blam)$ is the dual Specht module of $\W_{r-l}$. Although one could alternatively define the cell module as $\ind_l S(\blam)$, where $S(\blam)$ is the Specht module of $\W_{r-l}$, we do not consider $\ind_l S(\bolam)$ in this paper.
%
The simple $B$-modules arise as quotients of the cell modules $\ind_l S'(\blam)$ for $(l,\blam) \in \Lambda$, as shown in  [\cite{GL}, Theorem 3.4]. Hence, the isomorphism classes of simple $B$-modules are indexed by pairs $(l, \blam) \in \Lambda$, where $\blam$ is a $p$-regular bi-partition of $r-l$ and $0 \leq l \leq r$.

We now define our main object - the permutation module of $B$ using the induction functor. For each $(l, \blam) \in \Lambda$, the \textit{permutation module} of $B$ is defined by 
\begin{equation*}
    M(l,\blam) := \Ind_l M(\blam) = M(\blam) \otimes_{\W_{r-l}} e_l B.
\end{equation*}
The dimension of $M(l,\blam)$ is independent of the parameter $\delta$ and the characteristic of the field $K$.

\section{Construction of Young module} \label{Construction of Young module for type C}

In this section, our main aim is to define the Young module as a unique direct summand of the permutation module, following an approach similar to that used for Brauer algebras in \cite{HP} and walled Brauer algebras in \cite{CGwalled}. The Young module of $B$ is constructed by extending the Young module of $\W_r$ as outlined in Theorem \ref{Decomposition of the permutation module of hyperoctahedral group}. 

\begin{theorem}\label{existence and uniqueness of Young module of type C}
For $(l,\blam) \in \Lambda$, there exists a unique direct summand of $M(l,\blam)$ with a quotient isomorphic to $\ind_l Y(\blam)$.
\end{theorem}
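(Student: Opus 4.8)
The plan is to mimic the construction of Young modules in the cellularly stratified setting, as done for Brauer algebras in \cite{HP} and walled Brauer algebras in \cite{CGwalled}, using the interplay between the exact functor $\ind_l$, the induction functor $\Ind_l$, and the decomposition of $M(\blam)$ over $\W_{r-l}$ provided by Theorem \ref{Decomposition of the permutation module of hyperoctahedral group}. First I would recall that $M(l,\blam) = \Ind_l M(\blam) = M(\blam) \otimes_{\W_{r-l}} e_l B$, and that $M(\blam)$ decomposes over $\W_{r-l}$ as $Y(\blam) \oplus \bigoplus_{\bmu \unrhd \blam} Y(\bmu)^{c_{\bmu}}$. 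Since $\Ind_l$ is an additive functor, applying it term by term gives a decomposition of $M(l,\blam)$ into $\Ind_l Y(\blam)$ and the summands $\Ind_l Y(\bmu)^{c_{\bmu}}$. The Krull--Schmidt theorem then lets me further decompose each $\Ind_l Y(\bmu)$ into indecomposable $B$-modules, so the task reduces to identifying the correct indecomposable summand of $\Ind_l Y(\blam)$ and proving its uniqueness.

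Next I would use the composition $\Res_l \circ \ind_l = \mathrm{id}$ and the fact that $\Res_l$, applied to $M(l,\blam) = \Ind_l M(\blam)$, recovers information about the top layer. The key point is that $\ind_l Y(\blam)$ should appear as a quotient of exactly one indecomposable summand of $M(l,\blam)$. To pin this down, I would argue that there is a surjection $M(l,\blam) = \Ind_l M(\blam) \twoheadrightarrow \ind_l M(\blam) \twoheadrightarrow \ind_l Y(\blam)$ (the first being the canonical quotient map $e_l B \to e_l(B/J_{l+1})$ tensored appropriately, the second coming from the projection $M(\blam) \to Y(\blam)$ followed by exactness of $\ind_l$). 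Applying $\Res_l$ to this surjection and using $\Res_l \ind_l = \mathrm{id}$ together with the adjunction $(\Ind_l, \Res_l)$, one checks that the summands $\Ind_l Y(\bmu)$ with $\bmu \neq \blam$ cannot surject onto $\ind_l Y(\blam)$: indeed, applying $\Res_l$ to $\Ind_l Y(\bmu)$ and then projecting to the appropriate layer yields $Y(\bmu)$, and a surjection onto $\ind_l Y(\blam)$ would force (after $\Res_l$ and truncation) a surjection $Y(\bmu) \twoheadrightarrow Y(\blam)$ of $\W_{r-l}$-modules, which is impossible for $\bmu \neq \blam$ since distinct Young modules of $\W_{r-l}$ are non-isomorphic indecomposables and there is no such surjection (one has $Y(\bmu) \twoheadrightarrow Y(\blam)$ only if $\bmu = \blam$, as $Y(\blam)$ is characterized among summands of $M(\blam)$ as the one containing $S(\blam)$, equivalently by its top/socle structure relative to the dominance order). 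This isolates $\Ind_l Y(\blam)$ as the only summand capable of carrying $\ind_l Y(\blam)$ as a quotient.

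Finally, within $\Ind_l Y(\blam)$ I would show that there is a unique indecomposable direct summand mapping onto $\ind_l Y(\blam)$. Write $\Ind_l Y(\blam) = \bigoplus_j Z_j$ with each $Z_j$ indecomposable; since the surjection onto the indecomposable $\ind_l Y(\blam)$ restricts to at least one $Z_j$ surjectively, and since two distinct summands cannot both surject onto an indecomposable module unless one of them already does so alone (here one uses that $\ind_l Y(\blam)$ has simple top, or more carefully, that the summand containing the relevant idempotent/generator is unique by a Fitting-lemma argument analogous to \cite[proof for Brauer algebras]{HP}), exactly one $Z_j$ works. Label this summand $Y(l,\blam)$. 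Uniqueness follows from Krull--Schmidt: any direct summand of $M(l,\blam)$ with quotient $\ind_l Y(\blam)$ must, by the layer argument above, lie inside $\Ind_l Y(\blam)$ up to isomorphism, and then be isomorphic to $Y(l,\blam)$.

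The main obstacle I expect is the careful bookkeeping with the functors $\ind_l$, $\Ind_l$, $\Res_l$ and the filtration by the $J_m$: one must ensure that applying $\Res_l$ to $\Ind_l Y(\bmu)$ and truncating to the $l$th layer genuinely returns $Y(\bmu)$ (this rests on the cellularly stratified structure from Proposition \ref{iterated inflation of Brauer algebras of type C for type C} and the property that $\Res_l$ is a left inverse to $\ind_l$ but only a "one-sided" inverse to $\Ind_l$), and that no cancellation among the higher layers spoils the argument that distinct $\bmu$ give non-isomorphic, non-surjecting contributions. The analogous statements for Brauer and walled Brauer algebras in \cite{HP} and \cite{CGwalled} provide the template, but the non-symmetric-group input algebra $\W_{r-l}$ means one must invoke Theorem \ref{Decomposition of the permutation module of hyperoctahedral group} and the properties of Young modules of $\W_{r-l}$ (uniqueness as the summand of $M(\blam)$ containing $S(\blam)$) rather than the classical results for $\Sg_n$.
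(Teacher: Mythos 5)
Your overall strategy coincides with the paper's: decompose $M(\blam)$ over $\W_{r-l}$ via Theorem \ref{Decomposition of the permutation module of hyperoctahedral group}, apply $\Ind_l$, produce a surjection onto $\ind_l Y(\blam)$ from the adjunction isomorphism $\Hom_B\big(\Ind_l Y(\blam), \ind_l Y(\blam)\big) \cong \End_{\W_{r-l}}\big(Y(\blam)\big)$, and then rule out every other candidate summand. However, two of your sub-arguments have genuine gaps.

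First, to exclude the summands $\Ind_l Y(\bmu)$ with $\bmu \neq \blam$ you reduce, exactly as the paper does, to showing that there is no surjection $Y(\bmu) \twoheadrightarrow Y(\blam)$ of $\W_{r-l}$-modules; but you then justify this only by saying that distinct Young modules are non-isomorphic indecomposables characterized by their position relative to the dominance order. Non-isomorphic indecomposables can perfectly well admit surjections between them (a projective cover surjects onto any of its quotients), so this is an assertion, not a proof. The paper's Claim 3 supplies the missing content: extend the putative surjection $\phi'$ to a map $\tilde{\phi'}\colon M(\bmu) \to Y(\blam)$, use the bi-tableau analogue of James's lemma ([\cite{CanHyper}, Lemma 2.2]) to get $M(\bmu)k_{\tab} = 0$ for $\bmu \triangleright \blam$, hence $Y(\blam)k_{\tab} = 0$ as the image of $Y(\bmu)k_{\tab}$, contradicting $e_{\tab} = \{\tab\}k_{\tab} \neq 0$ inside $S(\blam) \subseteq Y(\blam)$. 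Some such polytabloid computation is indispensable here and is the real technical core of the theorem.

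Second, your uniqueness argument inside $\Ind_l Y(\blam)$ rests on the principle that two distinct indecomposable summands cannot both surject onto an indecomposable module, backed by the claim that $\ind_l Y(\blam)$ has simple top. Both points fail: the general principle is false ($P \oplus P$ has two summands surjecting onto the simple top of $P$), and $\ind_l Y(\blam)$ need not have simple top, since $Y(\blam)$ is merely indecomposable. The paper instead argues at the level of elements: assuming a second summand $(\Ind_l Y(\blam))\epsilon_j$ also surjects onto $\ind_l Y(\blam)$, it evaluates the resulting map on elements of the form $ve_l$ in two ways, using that the map is a right $B$-module homomorphism and that right multiplication by $e_l$ returns everything to the top layer, and derives a contradiction with the map $\Phi$ constructed in its Claim 1. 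Your parenthetical appeal to a Fitting-type argument via the locality of $\End_{\W_{r-l}}(Y(\blam))$ points in a workable direction, but as written it does not close the gap.
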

\begin{proof}
By Theorem \ref{Decomposition of the permutation module of hyperoctahedral group}, the permutation module $M(\blam)$ of $\W_r$ decomposes into a direct sum of  indecomposable Young modules, 
  \[
  M(\blam) = Y(\blam) \oplus \big(\underset{\bmu}{\oplus} Y(\bmu)^{ c_{\bmu}}\big).
  \]
 Applying the induction functor $\Ind_l$, we obtain a decomposition of the permutation module:
  \begin{align}
     M(l,\blam)= \Ind_lM(\blam)= \Ind_l Y(\blam)\oplus \big( \bigoplus_{(l,\bomu)\in \Lambda} \Ind_l Y(\bmu)^{ c_{\bomu}} \big).\nonumber\noindent
  \end{align}
  Let $\epsilon_i \in \End_{B} (\Ind_l Y(\blam)) $ be primitive idempotents such that $\underset{i=1}{\overset{s}{\sum}} \epsilon_{i} = 1_{ \End_{B} (\Ind_l Y(\blam))}$. Further decomposing $\Ind_l Y(\blam)$, we obtain
  \begin{equation}
  \Ind_l M(\blam)= \bigoplus_{i=1}^s (\Ind_l Y(\blam))\epsilon_i \oplus \big( \bigoplus_{(l,\bomu)\in \Lambda}  \Ind_l Y(\bmu)^{ c_{\bmu}} \big). \label{decomposition of Ind Y_blam for type C}
  \end{equation}

\textbf{Claim 1:} There exists a direct summand of $\Ind_l Y(\blam)$ with a quotient isomorphic to $\ind_l Y(\blam)$.\\

To prove this, it suffices to construct a surjective map $(\Ind_l Y(\blam))\epsilon_i \twoheadrightarrow \ind_l Y(\blam)$, for some $i$. Since $\Res_l$ is right adjoint to $\Ind_l$, and a left inverse of $\ind_l$, we have the following isomorphism:
\begin{align*}
\Hom_B \big( \Ind_l Y(\blam), \ind_l Y(\blam) \big ) 
& \cong \Hom_{\W_{r-l}} \big( Y(\blam), \Res_l \ind_l Y(\blam) \big )\\
& \cong \Hom_{\W_{r-l}} \big( Y(\blam),Y(\blam) \big )\\ 
&= \End_{\W_{r-l}} \big( Y(\blam) \big).
\end{align*}
This yields the following commutative diagram:
\begin{equation}\label{commutative diagram for existence of Young module for type C}
  \begin{tikzcd}[x=2cm,y=2cm]
 Y(\blam) \ar[r,"\phi"]\ar[d," \Ind_l"]&  Y(\blam)\ar[d, "\ind_l"]\\
\Ind_lY(\blam) \ar[r,"\Phi"] & \ind_lY(\blam).
\end{tikzcd}
\end{equation}
Let $\Phi \in \Hom_B \big( \Ind_l Y(\blam), \ind_l Y(\blam) \big ) $ be such that $\Phi \big(\Ind_l ( y) \big)= \ind_l \big( \phi(y)\big)$, for all $y \in Y(\blam)$. By choosing $\phi$ to be the identity map, it follows that $\Phi$ is surjective. Hence, from (\ref{decomposition of Ind Y_blam for type C}), there exists a direct summand of $\Ind_l M(\blam)$ (namely $(\Ind_l Y(\blam) )\epsilon_i$) that surjects onto $\ind_l Y(\blam)$ for some $i$.

\textbf{Claim 2:} For the same $i$ as above, $(\Ind_l Y(\blam)) \epsilon_i$ is the only direct summand of $\Ind_l Y(\blam)$ with a quotient isomorphic to $\ind_lY(\blam)$. 

Suppose there exists another direct summand  $(\Ind_l Y(\blam))\epsilon_j$ for $j \neq i$, with a quotient isomorphic to $\ind_l Y(\blam)$. Consider a surjective homomorphism $$\Psi: \Ind_l Y(\blam) \twoheadrightarrow \ind_l Y(\blam),$$ defined by
\begin{align*}
\Psi \big( (\Ind_l Y(\blam))\epsilon_j \big) &= \begin{cases} \ind_l Y(\blam) & ;\text{ if } j \neq i \\
0 &; \text{ otherwise. }
\end{cases}
\end{align*}
Then $\Psi$ sends an element of the form $y \otimes e_l$ to $ \phi(y) \otimes e_l+ \phi(y)\otimes e_l J_{l+1}$, for some non-zero $y \in Y(\blam)$ and $\phi \in \End_{\W_{r-l}} (Y(\blam))$. Since $\Psi$ is surjective, there exists $v= \sum\limits_{i} y_i \otimes e_l d_i$ (with $d_i \in B$) such that $\Psi(v)= \phi(y)\otimes (e_l+e_lJ_{l+1})$. Now, we compute 
\begin{equation*}
ve_l= \big( \sum_i^s y_i \otimes e_l d_i \big)e_l = \sum_i^s y_i \otimes e_l d_i e_l = y_1 \otimes e_l +\text{ lower terms. }
\end{equation*}
Evaluating $\Psi$ on an element of the form $ve_l$, we have 
\begin{equation}\label{only summand Young module 1 for type C}
\Psi(ve_l) = \Psi(y_1\otimes e_l+ \text{ lower terms })=  \phi(y_1) \otimes e_l + \text{ lower terms. }  
\end{equation}
Since $\Psi $ is a $B$-module homomorphism, we also have 
\begin{equation} \label{only summand Young module 2 for type C}
\Psi(ve_l)=  \Psi(v)e_l= \Psi\big(( 1\otimes e_l)+ \text{ lower term } \big)e_l= y \otimes e_l+ \text{ lower terms. }
\end{equation}

Comparing (\ref{only summand Young module 1 for type C}) and (\ref{only summand Young module 2 for type C}), we obtain $\phi(y_1)= y$. If $y_1=0$ then $y=0$, contradicting the assumption that $y \neq 0$. Hence $y_1\neq0$.
By Claim 1, there exists $\Phi \in \Hom_B(\Ind_l Y(\blam), \ind_l Y(\blam))$ such that $\Phi(ve_l)=y \otimes e_l+\text{ lower terms }\neq 0$. But $ve_l \in (\Ind_l Y(\blam))\epsilon_j$, which is a contradiction. Hence, the claim follows. 

\textbf{Claim 3:} There is no other summand of $\Ind_l Y(\bmu)$ with a quotient isomorphic to $\ind_l Y(\blam)$, for $\bmu \triangleright \blam$.

Assume that there exists a direct summand $\Ind_l Y(\bmu)$ of $\Ind_l M(\blam)$ with a quotient isomorphic to $\ind_l Y(\blam)$, for some $\bmu \triangleright \blam$. Then there exists a surjective $B$-module homomorphism $\Phi': \Ind_l Y(\bmu) \longrightarrow\ind_l Y(\blam)$. By an argument analogous to that used in (\ref{commutative diagram for existence of Young module for type C}), we obtain the following isomorphism:
\begin{equation*}
\Hom_B \big (\Ind_l Y(\bmu), \ind_l Y(\blam) \big) \cong \Hom_{\W_{r-l}} \big( Y(\bmu), Y(\blam) \big ).
\end{equation*}
Thus, $\Phi'$ is surjective only if $\phi' \in \Hom_{\W_{r-l}} \big( Y(\bmu), Y(\blam) \big )$ is also surjective. We can extend the epimorphism $\phi'$ to $\tilde{\phi'}: M(\bmu) \longrightarrow Y(\blam) $ as follows: for $m \in M(\bmu)$,
\begin{align*}
 \tilde{\phi'}(m) = \begin{cases} \phi'(m) &; \text{ if } m \in Y(\bmu)\\
0 &; \text{ otherwise}.
\end{cases}
\end{align*}
Let $\tilde{\tab}$ be a $\bomu$-bi-tableau. Consider $k_{\tab}$ to be the alternating column sum corresponding to the $\blam$-bi-tableau $\tab$. If $\blam \ntrianglerighteq \bmu$ and $|\lambda^{1}|>|\mu^{1}|$, then $\{\tilde{\tab}\}\cdot  k_{\tab}=0$ by [\cite{CanHyper}, Lemma 2.2]. Since the permutation module $M(\bmu)$ is generated by all $\bmu$-tabloids, we have $M(\bmu) k_{\tab}=0$ provided $\bmu \triangleright \blam$. Then all direct summands of $M(\bmu)$ act trivially under $k_{\tab}$, and in particular $Y(\bmu) k_{\tab}=0$. Since $Y(\bolam)k_{\tab}$ is the image of $Y(\bomu)k_{\tab}$ under the map $\phi'$, we conclude $Y(\bolam) k_{\tab}=0$.
%
%
%
This contradicts the fact that $\{\tab\}k_{\tab}$ is a non-zero element, as it generates the Specht module $S(\blam)$. Hence, the assumption leads to  a contradiction.
\end{proof}
The above construction allows us to define the Young modules of $B$. For $(l, \blam) \in \Lambda$, we denote by $Y(l,\blam)$ the unique direct summand of $M(l,\blam)$ that admits a surjection onto $\ind_l Y(\blam)$. We refer to this as the \textit{Young module} of $B$. 

\begin{proposition}\label{Characterization of Young module in type C}
The Young modules satisfy the following properties: let $(l, \blam),(m, \bmu) \in \Lambda$,
\begin{itemize}
\item[1.] The Young module $Y(l,\blam)$ appears exactly once in the decomposition of $M(l,\bolam)$.
\item[2.] If $l<m$, then $Y(l, \blam)$ does not appear as a direct summand of $M(m,\bmu)$.
\item[3.] $Y(l,\blam)$ appears as a direct summand of $M(l,\bmu)$ if and only if  $Y(\blam)$ is a direct summand of $M(\bmu)$, which occurs if $\blam \unrhd \bmu$.
\item[4.] If $Y(l,\blam)\cong Y(m,\bmu)$, then $(l,\blam)=(m,\bmu)$.
\end{itemize}
\end{proposition}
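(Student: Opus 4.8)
The plan is to derive all four statements from the decomposition of $M(\blam)$ into indecomposable Young modules of $\W_r$ (Theorem \ref{Decomposition of the permutation module of hyperoctahedral group}) together with the construction in Theorem \ref{existence and uniqueness of Young module of type C}, exploiting the Krull--Schmidt property for finitely generated $B$-modules and the layer-respecting behaviour of the functors $\ind_l$, $\Ind_l$, $\Res_l$. First I would record the basic decomposition: applying $\Ind_l$ to $M(\blam) = Y(\blam)\oplus\bigoplus_{\bmu\vartriangleright\blam}Y(\bmu)^{c_{\bmu}}$ gives $M(l,\blam) = \Ind_l Y(\blam)\oplus\bigoplus_{\bmu}\Ind_l Y(\bmu)^{c_{\bmu}}$, and by Theorem \ref{existence and uniqueness of Young module of type C} the summand $Y(l,\blam)$ is the unique indecomposable summand of $\Ind_l Y(\blam)$ surjecting onto $\ind_l Y(\blam)$; by Claims 2 and 3 of that proof no summand of $\Ind_l Y(\bmu)$ for $\bmu\vartriangleright\blam$ surjects onto $\ind_l Y(\blam)$. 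This immediately gives item 1: $Y(l,\blam)$ occurs in $M(l,\blam)$, and it cannot occur a second time because any second copy would have to come from one of the other summands $\Ind_l Y(\bmu)$, contradicting the uniqueness established in the proof of Theorem \ref{existence and uniqueness of Young module of type C} (or one can invoke Krull--Schmidt: the multiplicity of $Y(l,\blam)$ in $\Ind_l Y(\blam)$ is one since any higher multiplicity would force two non-isomorphic-to-each-other... — more cleanly, $\Res_l Y(l,\blam)$ contains $Y(\blam)$ with multiplicity one, and $\Res_l$ is additive).

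For item 2, the key point is that $\Res_l$ annihilates all layers $m'<l$ while $\Res_m$, applied to a module supported in layer $m$, recovers the $\W_{r-m}$-module. More precisely, I would argue: if $Y(l,\blam)$ were a summand of $M(m,\bmu) = \Ind_m M(\bmu)$ with $l<m$, then since $M(m,\bmu)$ lies in the subcategory $\text{\textbf{mod-}}B/J_{m}\cdot$(lower) — concretely, $M(m,\bmu)\cdot e_{l'} = 0$ for $l'<m$ by the iterated-inflation structure of $B$ in Proposition \ref{iterated inflation of Brauer algebras of type C for type C} — we would get $Y(l,\blam)\cdot e_l = 0$. But $Y(l,\blam)$ surjects onto $\ind_l Y(\blam) = Y(\blam)\otimes_{\W_{r-l}}e_l(B/J_{l+1})$, which satisfies $\ind_l Y(\blam)\cdot e_l \neq 0$ (it is a nonzero cell-layer module), a contradiction. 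So I would make precise the statement that a summand of $M(m,\bmu)$ is killed by right multiplication by $e_l$ for $l<m$, using the ideal filtration $J_0\supseteq J_1\supseteq\cdots$ and the fact that $e_l\in J_l$.

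For item 3, the ``if and only if'' within a fixed layer: $Y(l,\blam)$ is a summand of $M(l,\bmu)=\Ind_l M(\bmu)$. Applying $\Res_l$ (which is additive and a left inverse of $\ind_l$, so $\Res_l\ind_l = \mathrm{id}$, and one checks $\Res_l\Ind_l M(\bmu)$ contains $M(\bmu)$ as the layer-$l$ part) turns a summand decomposition of $M(l,\bmu)$ into a summand decomposition involving $M(\bmu)$; since $\Res_l Y(l,\blam)$ contains $Y(\blam)$, if $Y(l,\blam)\mid M(l,\bmu)$ then $Y(\blam)\mid \Res_l M(l,\bmu)$, which should force $Y(\blam)\mid M(\bmu)$, hence $\blam\unrhd\bmu$ by Theorem \ref{Decomposition of the permutation module of hyperoctahedral group}. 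Conversely if $\blam\unrhd\bmu$ then $Y(\blam)\mid M(\bmu)$, so $\Ind_l Y(\blam)\mid \Ind_l M(\bmu) = M(l,\bmu)$, and $Y(l,\blam)\mid\Ind_l Y(\blam)$ by Theorem \ref{existence and uniqueness of Young module of type C}. Finally, item 4: suppose $Y(l,\blam)\cong Y(m,\bmu)$. By item 1, $Y(l,\blam)\mid M(l,\blam)$, so $Y(m,\bmu)\mid M(l,\blam)$; by item 2 applied in both directions this forces $l = m$ (if $m<l$ then $Y(m,\bmu)$ cannot be a summand of $M(l,\blam)$, and symmetrically). With $l=m$, item 3 gives $\blam\unrhd\bmu$ from $Y(m,\bmu)\mid M(l,\blam)$ and $\bmu\unrhd\blam$ from $Y(l,\blam)\mid M(m,\bmu)$, whence $\blam=\bmu$. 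I expect the main obstacle to be item 2 (equivalently the layer-support claim): one must carefully justify, from the iterated-inflation/cellularly-stratified structure, that every indecomposable summand of $M(m,\bmu)$ is annihilated by $e_l$ for $l<m$ — i.e. that $M(m,\bmu)J_{l+1} = M(m,\bmu)$ is \emph{not} what one wants, rather that $M(m,\bmu)$ factors through $B/J_{?}$ appropriately — and to phrase this cleanly it is cleanest to observe $M(m,\bmu) = \Ind_m M(\bmu) = M(\bmu)\otimes_{\W_{r-m}}e_m B$ and that $e_m B$, as a right $B$-module, is generated by diagrams with at least $m$ horizontal edges, so $M(m,\bmu)\cdot e_l\subseteq$ (span of diagrams with $\geq m$ horizontal edges)$\cdot e_l = 0$ when $l<m$ since composing forces a strict drop. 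The remaining steps are short bookkeeping with Krull--Schmidt and the adjunction/inverse properties of the split-pair functors already recorded in the text.
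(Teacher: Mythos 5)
Your treatment of items 1, 3 and 4 follows essentially the same route as the paper: uniqueness from Theorem \ref{existence and uniqueness of Young module of type C} for item 1, transfer of direct summands along $\Ind_l$ and the adjunction within a fixed layer for item 3, and the antisymmetry argument combining items 2 and 3 for item 4. The genuine problem is item 2, where the claim you build everything on is false. You assert that $M(m,\bmu)\cdot e_l=0$ for $l<m$ because ``(span of diagrams with $\geq m$ horizontal edges)$\cdot\, e_l=0$ \dots since composing forces a strict drop.'' Composition of Brauer diagrams of type $C$ never produces zero when $\delta\neq 0$: the product of two basis diagrams is $\delta^k$ times a basis diagram, and composition can only \emph{increase} the number of horizontal edges (the number of through-strands can only drop). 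In particular $e_m e_l$ is a nonzero scalar multiple of a diagram with $m$ horizontal edges, so $e_mBe_l\neq 0$ and hence $M(m,\bmu)e_l=M(\bmu)\otimes_{\W_{r-m}}e_mBe_l\neq 0$. For the same reason $M(m,\bmu)=\Ind_m M(\bmu)$ is not a module over $B/J_m$ or any proper quotient: it is $\ind_m$, not $\Ind_m$, that truncates to a single layer.

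The vanishing that actually does the work points in the opposite direction and crucially uses the quotient by $J_{l+1}$: one has $e_l(B/J_{l+1})e_m=0$ for $m>l$, since every diagram $e_l d e_m$ has at least $m>l$ horizontal edges and therefore lies in $J_{l+1}$. This is how the paper argues: if $Y(l,\blam)$ were a summand of $M(m,\bmu)$ there would be a surjection $\Ind_m M(\bmu)\twoheadrightarrow \ind_l Y(\blam)$, but
\[
\Hom_B\big(\Ind_m M(\bmu),\ \ind_l Y(\blam)\big)\cong \Hom_{\W_{r-m}}\big(M(\bmu),\ Y(\blam)\otimes_{\W_{r-l}}e_l(B/J_{l+1})e_m\big)=0 .
\]
Equivalently, in your language: it is $\ind_l Y(\blam)\cdot e_m$ that vanishes for $m>l$, while $M(m,\bmu)$ is generated by $M(m,\bmu)e_m$ (as $e_m$ is idempotent), so no nonzero homomorphism $M(m,\bmu)\to\ind_l Y(\blam)$ can exist. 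Once item 2 is repaired in this way, your deductions of items 3 and 4 go through as in the paper.
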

\begin{proof}
    \begin{itemize}
        \item[1.] This follows directly from Theorem \ref{existence and uniqueness of Young module of type C}.
        \item[2.] Suppose that $Y(l,\blam)$ appears as a direct summand of $M(m,\bmu)$ for $l<m$. Then there exists a surjective map $M(m,\bmu) \longrightarrow \ind_l Y(\blam)$. Since $\Res_m$ is right adjoint to $\Ind_m$, we have
        \begin{align*}
             \Hom_B(\Ind_m M(\bmu), \ind_l Y(\blam))&= \Hom_{\W_{r-l}} (M(\bmu), \Res_m\ind_l Y(\blam))\\
             &= \Hom_{W_{r-l}} (M(\bmu), Y(\blam) \otimes_{\W_{r-l}} e_l (B/ J_{l+1}) e_m).                
        \end{align*}
         However, if $l<m$, then $e_l (B/ J_{l+1} )e_m=0$, which implies $Y(l,\blam)$ can not appear as a direct summand of $M(m, \bmu)$. 
        \item[3.] Assume that $Y(l,\blam)$ appears as a direct summand of $M(l,\bmu)$. Then by (\ref{commutative diagram for existence of Young module for type C}), it follows that $Y(\blam)$ must also be a direct summand of $M(\bmu)$. By Theorem \ref{Decomposition of the permutation module of hyperoctahedral group}, this is possible only if $\blam \unrhd \bmu$.
        
        Conversely, if $Y(\blam)$ is a direct summand of $M(\bmu)$, then $\Ind_l Y(\blam)$ appears as a direct summand of $\Ind_l M(\bmu)$. Since $Y(l,\blam)$ is the unique direct summand of $\Ind_l Y(\blam)$ with quotient isomorphic to $\ind_l Y(\blam)$, it follows that $Y(l,\blam)$ must appears as a direct summand of $M(l,\bmu)$.
        \item[4.] Suppose that $Y(l,\blam)$ is isomorphic to $Y(m,\bmu)$. Since $Y(m,\bmu)$ is a direct summand of $M(m,\bmu)$, the same holds for $Y(l,\blam)$. By part (2) of Proposition \ref{Characterization of Young module in type C}, we have $m \leq l$. If $m=l$, then by Proposition \ref{Characterization of Young module in type C} (3), we have $\blam \unrhd \bmu$. Hence, we obtain $(m,\bmu) \geq (l, \blam)$. Similarly, by considering $Y(m,\bmu)$ as a direct summand of $M(l, \blam)$, one can show that $(l,\blam) \geq (m,\bmu)$. Therefore, we conclude $(m,\bmu)=(l,\blam)$. 
    \end{itemize}
\end{proof}
\begin{remark}
     The results of this section remain valid when $K$ is any commutative unital ring with an invertible element $\delta$. However, starting from the next section, we will require $K$ to be an algebraically closed field of characteristic $p$. Therefore, for clarity and consistency, we will henceforth work with this fixed choice of $K$.
\end{remark}


\section{Cell filtration multiplicity} \label{Cell filtration multiplicity for type C}

In this section, we use the cohomological properties of the group algebras of the hyperoctahedral groups, as discussed in Section \ref{Vanishing cohomology on hyperoctahedral group for hyperoctahedral group}, to establish the analogues of the results of Hemmer and Nakano from \cite{HN} in the context of Brauer algebra of type $C$. A $B$-module $M$ is said to admit a \textit{cell filtration} if there exists a sequence of submodules of $M$ $$M=M_0 \supseteq M_1 \supseteq \cdots \supseteq M_k \supseteq M_{k+1} =0,$$ such that each subquotient is isomorphic to a cell module of $B$. Let $\Theta:= \{\Theta(l,\blam)= \ind_l S'(\blam):(l,\blam)\in \Lambda\}$ denote the collection of all cell modules of $B$. Let $\F_B(\Theta)$ denote the category of right $B$-modules admitting a cell filtration. 

\subsection{Stratifying system}
The stratifying system was first introduced by Erdmann and Sanez in \cite{ES}. In this section, we aim to show that the cell modules of $B$ form a stratifying system. As a direct consequence, the cell filtration multiplicities are well-defined. We begin by recalling the following definition from [\cite{Er05}, Definition 2.1] and [\cite{ES}, Definition 1.1]. 

\begin{definition}\label{definition of stratifying system}
   Let $R$ be a finite-dimensional algebra over an algebraically closed field. Given a set of $R$-modules $\{ \Theta(i): i =1,  \cdots, r \}$ and a set of indecomposable $R$-modules $\{Y(i): i= 1, \cdots, r\}$, we say that $\{\big(\Theta(i), Y(i)\big):i= 1,\cdots,r\}$ is a \textit{stratifying system} if it satisfies the following properties:
\begin{itemize}
    \item[(i)] $\Hom_R \big(\Theta(i+k), \Theta(i) \big)=0 $ for $k \geq 1$ and for all $ i\leq r$.
    \item[(ii)] For each $i$, there is a short exact sequence 
    \[
    \begin{tikzcd}
        0 \arrow[r] & \Theta(i) \arrow[r] & Y(i) \arrow[r] & Z(i) \arrow[r] &0,
    \end{tikzcd}
    \]
    and $Z(i)$ is filtered by $\Theta(j)$ with $j <i$.
    \item[(iii)] $\Ext_R^1 (\F(\Theta), Y)=0$, where $Y= \bigoplus\limits_{i=1}^r Y(i)$ and $\F(\Theta)$ denotes the category of $R$-modules admitting a $\Theta$-filtration. 
\end{itemize}
\end{definition}
To establish the existence of stratifying system, it is enough to prove the following result:
\begin{theorem}[\cite{ES}, Theorem 1.8] \label{existence of stratifying system}
Assume we have indecomposable $R$-modules $\{\Theta(i): i =1, \cdots, r\}$ such that 
\begin{itemize}
    \item[1.] $\Hom_R (\Theta(j), \Theta(i)) =0$ for $j >i$.
    \item[2.] $\Ext_R^1 (\Theta(j), \Theta(i)) =0$ for $j \geq i$.
\end{itemize}
Then there exists modules $\{Y(i): i=1,\cdots, r\}$ such that $\{(\Theta(i), Y(i))\}_{i=1}^r$ is a stratifying system. 
\end{theorem}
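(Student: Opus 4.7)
The construction proceeds by induction on $i$ using iterated universal extensions; condition (i) of Definition \ref{definition of stratifying system} is supplied directly by hypothesis 1. The plan is to build, for each $i$, a module $\tilde Y(i)$ containing $\Theta(i)$ as a submodule whose quotient is filtered by $\Theta(j)$ with $j<i$, and satisfying $\Ext_R^1(\Theta(k),\tilde Y(i))=0$ for every $k\in\{1,\dots,r\}$; then to let $Y(i)$ be the indecomposable summand of $\tilde Y(i)$ containing $\Theta(i)$.

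Fix $i$ and set $M_i:=\Theta(i)$. Descending in $j$, suppose that $M_{j+1}$ has been constructed with $\Ext_R^1(\Theta(k),M_{j+1})=0$ for all $k\geq j+1$. Choose a basis $\xi_1,\ldots,\xi_d$ of $\Ext_R^1(\Theta(j),M_{j+1})$ and form the universal extension
\[
0\longrightarrow M_{j+1}\longrightarrow M_j\longrightarrow \Theta(j)^d\longrightarrow 0
\]
classified by $(\xi_1,\ldots,\xi_d)$. Applying $\Hom_R(\Theta(k),-)$ and using hypothesis 2 (which gives $\Ext_R^1(\Theta(k),\Theta(j))=0$ whenever $k\geq j$) together with the inductive vanishing, I deduce $\Ext_R^1(\Theta(k),M_j)=0$ for $k\geq j+1$. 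Applying $\Hom_R(\Theta(j),-)$ to the same sequence, the connecting map $\End_R(\Theta(j))^d\to\Ext_R^1(\Theta(j),M_{j+1})$ is surjective by the choice of universal extension, and $\Ext_R^1(\Theta(j),\Theta(j))=0$, so $\Ext_R^1(\Theta(j),M_j)=0$. Iterating down to $j=1$ yields $\tilde Y(i):=M_1$ with the required properties. Note that hypothesis 2 at the diagonal index $j=k$, i.e.\ the assumption $\Ext_R^1(\Theta(j),\Theta(j))=0$, is essential to keep the vanishing stable through each step.

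For axiom (iii), let $Y=\bigoplus_{i=1}^{r} Y(i)$. Given any $X\in\F(\Theta)$ with filtration $X=X_0\supseteq X_1\supseteq\cdots\supseteq X_n=0$ and $X_t/X_{t+1}\cong\Theta(k_t)$, a downward induction on $t$ along the long exact sequences obtained from $\Hom_R(-,Y)$ applied to $0\to X_{t+1}\to X_t\to\Theta(k_t)\to 0$, using $\Ext_R^1(\Theta(k_t),Y)=0$, yields $\Ext_R^1(X,Y)=0$. The main technical obstacle is the final Krull--Schmidt step: passing from $\tilde Y(i)$ to an indecomposable summand $Y(i)$ containing $\Theta(i)$ while preserving both the short exact sequence of axiom (ii) and the $\Ext^1$-vanishing. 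The hard part is that a direct summand of a $\Theta$-filtered module need not itself carry a $\Theta$-filtration; I plan to handle this by exploiting the $\Ext^1$-vanishing established above to show that the complement of $Y(i)$ in $\tilde Y(i)$ is itself $\Theta$-filtered, so that the filtration of $\tilde Y(i)$ can be refined compatibly with any Krull--Schmidt decomposition, with $\Theta(i)$ landing in a unique summand by its indecomposability.
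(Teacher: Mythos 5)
The paper does not actually prove this statement: it is imported verbatim from Erdmann--S\'aenz as [\cite{ES}, Theorem 1.8], so there is no internal proof to compare against. Measured against the standard argument in the literature, your construction is the right one. The descending chain of universal extensions $0\to M_{j+1}\to M_j\to\Theta(j)^{d_j}\to 0$ starting from $M_i=\Theta(i)$, the two applications of the long exact sequence (for $k>j$ using hypothesis 2 off the diagonal, and for $k=j$ using surjectivity of the connecting map together with $\Ext_R^1(\Theta(j),\Theta(j))=0$), and the d\'evissage giving axiom (iii) are all correct, and you correctly identify where each hypothesis is used. Axioms (i) and (ii) for $\tilde Y(i)$ are also fine.

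The genuine gap is the step you flag but do not close: Definition \ref{definition of stratifying system} requires each $Y(i)$ to be indecomposable, and your plan for extracting an indecomposable summand of $\tilde Y(i)$ is not viable as stated. Proving that the complement of $Y(i)$ in $\tilde Y(i)$ is again $\Theta$-filtered is tantamount to proving that $\F(\Theta)$ is closed under direct summands; in this theory that closure is a \emph{consequence} of the existence of the stratifying system, obtained via the equivalence of $\F(\Theta)$ with the $\Delta$-filtered category over the standardly stratified endomorphism algebra ([\cite{ES}, Theorem 1.6]), so invoking it here is circular, and the $\Ext^1$-vanishing you have established does not by itself deliver it. The standard repair is to show that $\tilde Y(i)$ is already indecomposable, so that no summand needs to be extracted. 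The ingredients are available to you: since $\tilde Y(i)/\Theta(i)$ is filtered by $\Theta(j)$ with $j<i$ and $\Hom_R(\Theta(i),\Theta(j))=0$ for $j<i$ by hypothesis 1, every endomorphism of $\tilde Y(i)$ restricts to an endomorphism of $\Theta(i)$; the resulting ring homomorphism $\End_R(\tilde Y(i))\to\End_R(\Theta(i))$ is surjective because the obstruction to extending an endomorphism of $\Theta(i)$ over $\tilde Y(i)$ lies in $\Ext_R^1\bigl(\tilde Y(i)/\Theta(i),\tilde Y(i)\bigr)$, which vanishes by what you have proved; one must then show its kernel lies in the radical, whence $\End_R(\tilde Y(i))$ is local and $\Theta(i)$ can only land in one summand because $\End_R(\Theta(i))$ is local. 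Until some version of this is carried out, the proof is incomplete at exactly the point you identified as the hard part.
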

We now proceed to verify that the cell modules of $B$ satisfy the conditions stated in Theorem \ref{existence of stratifying system}.
\begin{lemma}\label{Hom between two cell modules vanishing condition for type C}
    Let $\mathrm{char}~K \neq 2$, and $\delta \neq 0$. For $(l,\blam), (m,\bmu)\in \Lambda$, we have 
    \begin{equation*}
         \Hom_B(\ind_l S'(\blam), \ind_m S'(\bmu))=0, \text{ if }  (m, \bmu)> (l,\blam).
    \end{equation*}
\end{lemma}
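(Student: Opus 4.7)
The plan is to split according to the definition of the partial order: since $(m,\bmu) > (l,\blam)$ means either (i) $m < l$, or (ii) $m = l$ together with $\blam \triangleright \bmu$, I treat these two cases separately using quite different arguments.

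For case (i), the idea is to exploit the layer structure of the cellularly stratified algebra $B$. The module $\ind_m S'(\bmu)$ factors through $B/J_{m+1}$ and is therefore annihilated by the right action of any element of $J_{m+1}$. Because $\delta \neq 0$, the element $e_l$ is idempotent and lies in $J_l$, and since $l \geq m+1$ we have $J_l \subseteq J_{m+1}$, so $e_l$ acts as zero on $\ind_m S'(\bmu)$. On the other hand, $\ind_l S'(\blam)$ is generated as a right $B$-module by the elements $s \otimes \overline{e_l}$ for $s \in S'(\blam)$, and each such generator satisfies $(s \otimes \overline{e_l})\cdot e_l = s \otimes \overline{e_l}$. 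Any $B$-homomorphism $f$ into $\ind_m S'(\bmu)$ therefore sends each such generator to $f(s\otimes \overline{e_l})\cdot e_l = 0$, forcing $f = 0$.

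For case (ii), the plan is to apply adjunction. Since the canonical projection $\Ind_l S'(\blam) \twoheadrightarrow \ind_l S'(\blam)$ is surjective, precomposition yields an injection
\[
\Hom_B\bigl(\ind_l S'(\blam),\,\ind_l S'(\bmu)\bigr) \hookrightarrow \Hom_B\bigl(\Ind_l S'(\blam),\,\ind_l S'(\bmu)\bigr).
\]
The adjunction $(\Ind_l, \Res_l)$, together with the fact that $\Res_l$ is a left inverse to $\ind_l$, identifies the right-hand side with $\Hom_{\W_{r-l}}\bigl(S'(\blam),\,S'(\bmu)\bigr)$. Using Corollary \ref{dual Specht module is isomorphic to tensor product of two specht in any field for hyperoctahedral group} I rewrite $S'(\blam) \cong S(\blam')\otimes \sgn$ and similarly for $\bmu$; since $\sgn$ is one-dimensional, tensoring with it preserves $\Hom$, so this space is isomorphic to $\Hom_{\W_{r-l}}\bigl(S(\blam'),\,S(\bmu')\bigr)$. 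The hypothesis $\blam \triangleright \bmu$, combined with the order-reversing property of conjugation (recorded in Section \ref{Representation theory of hyperoctahedral group for type C}), gives $\bmu' \triangleright \blam'$ and in particular $\blam' \ntrianglerighteq \bmu'$; Proposition \ref{Hom between Specht module of hyperoctahedral group for type C} then delivers the desired vanishing, which is where the hypothesis $\mathrm{char}\,K \neq 2$ enters.

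The main subtlety is in case (ii), where one must verify that tensoring with the sign module genuinely gives the claimed $\Hom$-isomorphism on $\W_{r-l}$-modules and that the dominance-order flip under conjugation is strict. Both are routine consequences of the combinatorial and Morita-equivalence material already established, so I do not anticipate serious technical difficulty once the split into the two cases is set up correctly.
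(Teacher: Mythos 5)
Your proof is correct and follows essentially the same route as the paper: both arguments split along the two branches of the partial order, reduce the $m=l$ case to showing $\Hom_{\W_{r-l}}(S'(\blam),S'(\bmu))=0$, and then invoke Proposition \ref{Hom between Specht module of hyperoctahedral group for type C}. Your two minor variations --- the direct idempotent-annihilation argument for $m<l$ in place of the paper's computation $e_m(B/J_{m+1})e_l=0$, and the sign-twist together with the conjugation flip $\blam\triangleright\bmu \Leftrightarrow \bmu'\triangleright\blam'$ to pass from dual Specht modules back to Specht modules (a step the paper handles only implicitly via duality) --- are both sound and, if anything, make the last step more explicit.
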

\begin{proof}
 We have the following isomorphism
\begin{align}
    \Hom_B(\ind_l S'(\blam), \ind_m S'(\bmu)) &\cong \Hom_{B}( S'(\blam) \otimes_{\W_{r-l}} e_l (B/J_{l+1}),  \ind_m S'(\bmu) )& \nonumber \\
    & \cong \Hom_{\W_{r-l}} (S'(\blam), \Hom_B (e_l (B/J_{l+1}), \ind_m S'(\bmu)))& \hfill \nonumber\label{Hom between two indl for type C}\\
    & \cong \Hom_{\W_{r-l}} (S'(\blam), S'(\bmu) \otimes_{\W_{r-m}} e_m(B/J_{m+1})e_l).&
\end{align}
If $l>m$, then $e_m (B/J_{m+1})e_l=0$, and hence the $\Hom$-space vanishes. If $l=m$, we have
$e_l(B/J_{l+1})e_l \cong \W_{r-l}$ as a $\W_{r-l}$-module. Therefore, we get
\begin{equation*}
    \Hom_B(\ind_l S'(\blam), \ind_l S'(\bmu)) \cong \Hom_{\W_{r-l}}( S'(\blam), S'(\bmu)). 
\end{equation*}
Moreover, if $\mathrm{char}~K \neq 2$, and $\bmu \ntrianglerighteq \blam$, it follows from Proposition \ref{Hom between Specht module of hyperoctahedral group for type C} that the right-hand side of the above congruence vanishes. Hence, the proof follows.
\end{proof}
\begin{lemma}\label{Ext between two cell modules vanishing condition for type C}
     Let $\mathrm{char}~K \neq 2,3$, and $\delta \neq 0$. For $(l,\blam),(m,\bmu)\in \Lambda$, we have 
    \begin{equation*}
         \Ext_B^1(\ind_l S'(\blam), \ind_m S'(\bmu))=0 \text{ if } (m, \bmu)\geq (l,\blam).
    \end{equation*}
\end{lemma}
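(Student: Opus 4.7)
The plan is to extend the strategy of Lemma~\ref{Hom between two cell modules vanishing condition for type C} by a dimension shift that replaces $\ind_l$ with the exact induction functor $\Ind_l$. Since $\Res_l$ is exact (it is right-multiplication by an idempotent), $\Ind_l$ preserves projectives, and from the iterated inflation structure in Proposition~\ref{iterated inflation of Brauer algebras of type C for type C} one sees that $e_l B$ is a free left $\W_{r-l}$-module, so $\Ind_l$ is itself exact. Together these give the derived adjunction
\[
\Ext_B^{i}(\Ind_l M, N) \cong \Ext_{\W_{r-l}}^{i}(M,\, \Res_l N).
\]
Exactness of $\Ind_l$ also shows that the sequence $0 \to e_l J_{l+1} \to e_l B \to e_l(B/J_{l+1}) \to 0$ remains exact after $S'(\blam) \otimes_{\W_{r-l}} -$, yielding
\[
0 \to K \to \Ind_l S'(\blam) \to \ind_l S'(\blam) \to 0, \qquad K := S'(\blam) \otimes_{\W_{r-l}} e_l J_{l+1}.
\]

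Applying $\Hom_B(-, \ind_m S'(\bmu))$ produces the long exact sequence containing
\[
\Hom_B(K, \ind_m S'(\bmu)) \to \Ext_B^{1}(\ind_l S'(\blam), \ind_m S'(\bmu)) \to \Ext_B^{1}(\Ind_l S'(\blam), \ind_m S'(\bmu)),
\]
so it suffices to kill the two outer terms. The right-hand term equals $\Ext_{\W_{r-l}}^{1}(S'(\blam), \Res_l \ind_m S'(\bmu))$, where $\Res_l \ind_m S'(\bmu) \cong S'(\bmu) \otimes_{\W_{r-m}} e_m(B/J_{m+1}) e_l$. If $m < l$, any product $e_m b e_l$ contains at least $l \geq m+1$ horizontal edges, whence $e_m B e_l \subseteq J_{m+1}$ and the restriction is zero. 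If $m = l$, then $e_l(B/J_{l+1}) e_l \cong \W_{r-l}$, so the Ext reduces to $\Ext_{\W_{r-l}}^{1}(S'(\blam), S'(\bmu))$; because $\W_{r-l}$ is a symmetric algebra, $K$-duality supplies the isomorphism $\Ext_{\W_{r-l}}^{1}(S'(\blam), S'(\bmu)) \cong \Ext_{\W_{r-l}}^{1}(S(\bmu), S(\blam))$, and this vanishes by Proposition~\ref{Ext between Specht module of hyperoctahedral group for type C} since the hypothesis $\blam \trianglerighteq \bmu$ forces $\bmu \ntriangleright \blam$ (invoking $\ch K \neq 2,3$).

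For the left-hand term, filter $e_l J_{l+1}$ by $e_l J_p$ for $p \geq l+1$; the subquotients $e_l J_p / e_l J_{p+1}$ are described by the iterated inflation structure and, after tensoring with $S'(\blam)$ over $\W_{r-l}$, induce a filtration of $K$ whose subquotients have the form $\Ind_p Y$ for suitable $\W_{r-p}$-modules $Y$. For each such piece the adjunction yields
\[
\Hom_B(\Ind_p Y,\, \ind_m S'(\bmu)) \cong \Hom_{\W_{r-p}}\!\bigl(Y,\; S'(\bmu) \otimes_{\W_{r-m}} e_m(B/J_{m+1}) e_p\bigr),
\]
and the same edge-counting argument applied to $p > l \geq m$ gives $e_m(B/J_{m+1}) e_p = 0$. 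An induction on the length of the filtration, using the Hom-exact sequences associated to the successive subquotients, then shows $\Hom_B(K, \ind_m S'(\bmu)) = 0$, and the long exact sequence closes the argument.

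The main technical obstacle is to unpack the iterated inflation decomposition $B = \bigoplus_l V_l \otimes V_l \otimes \W_{r-l}$ precisely enough to verify the exactness of $\Ind_l$ and the identification of the subquotients of $K$ as induced modules $\Ind_p Y$; once these structural inputs are available, the computation collapses via adjunction and horizontal-edge counting to the $\W_{r-l}$-level cohomology vanishing from Section~\ref{Vanishing cohomology on hyperoctahedral group for hyperoctahedral group}.
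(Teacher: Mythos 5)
Your overall strategy -- present $\ind_l S'(\blam)$ via a short exact sequence, kill the outer $\Ext$/$\Hom$ terms by adjunction and horizontal-edge counting, and reduce to the $\W_{r-l}$-level vanishing of Propositions \ref{Hom between Specht module of hyperoctahedral group for type C} and \ref{Ext between Specht module of hyperoctahedral group for type C} -- is the same reduction the paper carries out (the paper presents $\ind_l S'(\blam)$ via $\ind_l$ of the projective cover of $S'(\blam)$ over $\W_{r-l}$, whereas you compare $\Ind_l S'(\blam)$ with $\ind_l S'(\blam)$). However, there is a genuine error in your key structural input: $e_lB$ is \emph{not} free (nor projective) as a left $\W_{r-l}$-module, so $\Ind_l$ is \emph{not} exact. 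As a left $\W_{r-l}$-module, $e_lB$ decomposes according to the number $m\geq l$ of horizontal top arcs, and the summand at level $m$ is a transitive permutation module whose point stabilizer is the subgroup $\h_{m-l}\cong W_{m-l}$ of Subsection \ref{stabilizer of a partial diagram for type C}; this is projective only when $|W_{m-l}|=2^{m-l}(m-l)!$ is invertible in $K$, which fails for $p=\ch K\leq m-l$ (already for the ordinary Brauer algebra $\B_2$ with $l=0$ one finds a trivial left summand spanned by the arc diagram). Consequently the asserted derived adjunction $\Ext_B^i(\Ind_l M,N)\cong\Ext_{\W_{r-l}}^i(M,\Res_l N)$ does not follow, and as an isomorphism it is not available.

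Both uses of this false claim can be repaired, so the gap is fixable rather than fatal. First, the sequence $0\to e_lJ_{l+1}\to e_lB\to e_l(B/J_{l+1})\to 0$ is split as a sequence of left $\W_{r-l}$-modules (the left action preserves the number of arcs -- this is exactly the splitting invoked in the proof of Theorem \ref{cell filtration of permutation module for type C}), so applying $S'(\blam)\otimes_{\W_{r-l}}-$ still yields your exact sequence $0\to K\to\Ind_l S'(\blam)\to\ind_l S'(\blam)\to 0$. Second, for the right-hand outer term you only need a one-way comparison: for any right-exact left adjoint $\Ind_l$ of the exact functor $\Res_l$, applying $\Ind_l$ to a projective presentation of $M$ gives an embedding $\Ext_B^1(\Ind_l M,N)\hookrightarrow\Ext_{\W_{r-l}}^1(M,\Res_l N)$, which suffices to conclude vanishing. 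With these corrections (and noting that the subquotients of your filtration of $K$ are of the form $\ind_p(-)$ rather than $\Ind_p(-)$, which does not affect the edge-counting vanishing), your argument closes, landing on the same inputs as the paper: $e_m(B/J_{m+1})e_p=0$ for $p>m$, and $\Ext^1_{\W_{r-l}}(S'(\blam),S'(\bmu))=0$ for $\bmu\ntriangleright\blam$.
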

\begin{proof}
    Consider the following short exact sequence in $\text{\textbf{mod}-}B$ 
    \begin{equation}\label{Ext between two cell modules vanishing condition for type C exact sequence 1}
    \begin{tikzcd}
        0 \arrow[r] &\ker \phi \arrow[r] &e_l B \arrow[r, "\phi"] &\ind_l \W_{r-l} \arrow[r] &0,
    \end{tikzcd}    
    \end{equation}
    where $e_lB$ is a projective right $B$-module. The map $\phi$ sends the Brauer diagrams of type $C$ with at least $l$ horizontal edges to diagrams with exactly $l$ horizontal edges.  Since $e_l B$ is projective, $\phi$ is surjective,  and the sequence in (\ref{Ext between two cell modules vanishing condition for type C exact sequence 1}) is exact.

    Applying the contravariant functor  $\Hom_B(-, \ind_m S'(\bmu))$ to this sequence, we obtain the following exact sequence
    \begin{equation}\label{Ext between two cell modules vanishing condition for type C exact sequence 2}
    \begin{tikzcd}
        0 \arrow[r] &\Hom_B(\ind_l \W_{r-l},\ind_m S'(\bmu)) \arrow[r] &\Hom_B(e_lB,\ind_m S'(\bmu))        
        \ar[draw=none]{d}[name=X, anchor=center]{}
     \ar[rounded corners,
            to path={ -- ([xshift=2ex]\tikztostart.east)
                      |- (X.center) \tikztonodes
                      -| ([xshift=-2ex]\tikztotarget.west)
                      -- (\tikztotarget)}]{dll}[at end]{} \\       
        \Hom_B(\ker \phi,\ind_m S'(\bmu)) \arrow[r] & \Ext_B^1(\ind_l \W_{r-l},\ind_m S'(\bmu)) \arrow[r] & \Ext_B^1(e_lB,\ind_m S'(\bmu))=0.
    \end{tikzcd}    
    \end{equation}
    Since $e_lB$ is projective, it follows that $\Ext_B^1(e_lB, \ind_m S'(\bmu))=0$. Moreover, $\ker \phi$ consists of Brauer diagrams of type $C$ with at least $l+1$ horizontal edges that fix the top configuration containing the partial diagram of $e_l$. By the same argument as in (\ref{Hom between two indl for type C}), we get 
    \begin{equation*}
        \Hom_B(\ker \phi, \ind_m S'(\bmu))=0 \text{ if } m<l+1.
    \end{equation*}
    Hence, from (\ref{Ext between two cell modules vanishing condition for type C exact sequence 2}), we conclude that 
    \[
    \Ext_B^1(\ind_l \W_{r-l},\ind_m S'(\bmu))=0 \text{ if }m>l.
    \]
    
      It remains to show that when $m=l$, and  $\bmu \ntriangleright \blam$, we have $\Ext_B^1(\ind_l S'(\blam),\ind_l S'(\bmu))=0$. Consider the following short exact sequence 
    \begin{equation}\label{Ext between two cell modules vanishing condition for type C exact sequence 3}
    \begin{tikzcd}
        0 \arrow[r] &\ind_l\ker \varphi \arrow[r] &\ind_l P \arrow[r,] &\ind_l S'(\blam) \arrow[r] &0,
    \end{tikzcd}    
    \end{equation}
    where $P$ is the projective cover of $S'(\blam)$, and $\ker \varphi$ is the kernel of the surjection $P \xlongrightarrow{\varphi} S'(\blam)$. The exactness of (\ref{Ext between two cell modules vanishing condition for type C exact sequence 3}) follows from the exactness of the functor $\ind_l$. Applying the contravariant functor $\Hom_B(-, \ind_l S'(\bmu))$, we obtain the long exact sequence: 
    \begin{equation} \label{Ext between two cell modules vanishing condition for type C exact sequence 6}
    \begin{tikzcd}
        0 \arrow[r] &\Hom_B(\ind_l S'(\blam),\ind_l S'(\bmu)) \arrow[r] &\Hom_B(\ind_l P,\ind_l S'(\bmu))        
        \ar[draw=none]{d}[name=X, anchor=center]{}
     \ar[rounded corners,
            to path={ -- ([xshift=2ex]\tikztostart.east)
                      |- (X.center) \tikztonodes
                      -| ([xshift=-2ex]\tikztotarget.west)
                      -- (\tikztotarget)}]{dll}[at end]{} \\       
        \Hom_B(\ind_l\ker \varphi,\ind_l S'(\bmu)) \arrow[r] & \Ext_B^1(\ind_l S'(\blam),\ind_l S'(\bmu)) \arrow[r] & \Ext_B^1(\ind_l P,\ind_l S'(\bmu))=0.
    \end{tikzcd}    
    \end{equation}
        
    Since $\ind_l$ preserves projectivity, $\ind_l P$ is projective, so $\Ext_B^1(\ind_l P,\ind_l S'(\bmu))=0$. Thus, to conclude $\Ext_B^1(\ind_l S'(\blam),\ind_l S'(\bmu))=0$ for $\blam \unrhd \bmu$, it suffices to show $$\Hom_B(\ind_l\ker \varphi,\ind_l S'(\bmu))=0.$$

   We now compute the following term from (\ref{Ext between two cell modules vanishing condition for type C exact sequence 6})
    \begin{align*}
        \Hom_B\big(\ind_l \ker\varphi, \ind_l S'(\bmu)\big)&\cong \Hom_{\W_{r-l}} (\ker \varphi, \Hom_B(e_l(B/J_{l+1}), S'(\bmu) \otimes_{\W_{r-l}} e_l(B/J_{l+1})\big)\\
        &\cong  \Hom_{\W_{r-l}} (\ker \varphi,S'(\bmu) \otimes_{\W_{r-l}} e_l(B/J_{l+1})e_l)\\
        &\cong  \Hom_{\W_{r-l}} (\ker \varphi,S'(\bmu)).
    \end{align*}
    Thus, it remains to show $\Hom_{\W_{r-l}}(\ker \varphi, S'(\bmu))=0$.
    
    Applying the exact functor $\Res_l$ to (\ref{Ext between two cell modules vanishing condition for type C exact sequence 3}), and then applying $\Hom_{\W_{r-l}}(-, S'(\bmu))$, we get    
    \begin{equation*} \label{Ext between two cell modules vanishing condition for type C exact sequence 5}
    \begin{tikzcd}
        0 \arrow[r] &\Hom_{\W_{r-l}}( S'(\blam),S'(\bmu)) \arrow[r] &\Hom_{\W_{r-l}}(P, S'(\bmu))        
        \ar[draw=none]{d}[name=X, anchor=center]{}
     \ar[rounded corners,
            to path={ -- ([xshift=2ex]\tikztostart.east)
                      |- (X.center) \tikztonodes
                      -| ([xshift=-2ex]\tikztotarget.west)
                      -- (\tikztotarget)}]{dll}[at end]{} \\       
        \Hom_{\W_{r-l}}(\ker  \varphi, S'(\bmu)) \arrow[r] & \Ext_{\W_{r-l}}^1( S'(\blam), S'(\bmu)) \arrow[r] & \cdots.
    \end{tikzcd}    
    \end{equation*}
     If $\mathrm{char}~K \neq 2,3$ and $\bmu \ntriangleright \blam$, then by Proposition \ref{Ext between Specht module of hyperoctahedral group for type C},  $\Ext_{\W_{r-l}}^1( S'(\blam), S'(\bmu))=0$. Moreover, since $P$ is the projective cover of $S'(\blam)$, $\Hom_{\W_{r-l}}(S'(\blam), S'(\bmu))=0$ when  $\ch~K\neq 2$ and $ \bmu \ntrianglerighteq \blam$. Therefore, $\Hom_{\W_{r-l}}(P,S'(\bmu))=0$, which implies that $\Hom_{\W_{r-l}}(\ker \varphi, S'(\bmu))=0$, completing the proof.
\end{proof}
\begin{proposition}\label{stratifying system of brauer algebra of type C}
Let $\mathrm{char}~K \neq 2,3$, and $\delta \neq 0$. Then there exists a stratifying system of $B$.
\end{proposition}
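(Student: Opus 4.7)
The plan is to apply Theorem~\ref{existence of stratifying system} to the family of cell modules $\Theta = \{\ind_l S'(\blam) : (l,\blam) \in \Lambda\}$, invoking Lemmas~\ref{Hom between two cell modules vanishing condition for type C} and~\ref{Ext between two cell modules vanishing condition for type C} to supply the required $\Hom$ and $\Ext^1$ vanishing. First I would fix a linear refinement of the opposite of the partial order on $\Lambda$, enumerating its elements as $(l_1,\blam_1), \ldots, (l_N,\blam_N)$ so that whenever $(l_i,\blam_i) > (l_j,\blam_j)$ in $\Lambda$, one has $i<j$. Since $\Lambda$ is finite, such a topological ordering exists. Setting $\Theta(i):=\ind_{l_i}S'(\blam_i)$ then produces the required linear indexing of the cell modules.

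For the $\Hom$ condition of Theorem~\ref{existence of stratifying system}, fix $j>i$. By the choice of enumeration, either $(l_i,\blam_i) > (l_j,\blam_j)$ strictly in $\Lambda$, in which case Lemma~\ref{Hom between two cell modules vanishing condition for type C} gives $\Hom_B(\Theta(j), \Theta(i))=0$ directly; or the two pairs are incomparable in $\Lambda$, which forces $l_i=l_j$ with $\blam_i$ and $\blam_j$ incomparable in the dominance order on $\Lam_{r-l_i}$, and in particular $\blam_j \ntrianglerighteq \blam_i$. This second situation is actually covered by the reduction in the proof of Lemma~\ref{Hom between two cell modules vanishing condition for type C}: the identification $e_l(B/J_{l+1})e_l \cong \W_{r-l}$ reduces $\Hom_B(\Theta(j),\Theta(i))$ to $\Hom_{\W_{r-l_i}}(S'(\blam_j), S'(\blam_i))$, which vanishes by Proposition~\ref{Hom between Specht module of hyperoctahedral group for type C} combined with the duality of Corollary~\ref{dual Specht module is isomorphic to tensor product of two specht in any field for hyperoctahedral group}. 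The $\Ext^1$ condition for $j \geq i$ is established in exactly the same manner using Lemma~\ref{Ext between two cell modules vanishing condition for type C} and Proposition~\ref{Ext between Specht module of hyperoctahedral group for type C}; the reflexive case $j=i$ is admissible because Lemma~\ref{Ext between two cell modules vanishing condition for type C} already allows the equality $(m,\bmu)=(l,\blam)$.

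The step I expect to demand the most care is verifying the indecomposability of the cell modules $\ind_l S'(\blam)$, which is hypothesised in the version of Theorem~\ref{existence of stratifying system} quoted above. I would address this by combining the cellularly stratified structure of $B$ from Proposition~\ref{iterated inflation of Brauer algebras of type C for type C} with the analogous analysis of cell modules for walled Brauer algebras carried out in \cite{CGwalled}; alternatively, one may note that the construction in \cite[Theorem 1.8]{ES} extends without modification to any family of modules satisfying only the stated $\Hom$ and $\Ext^1$ vanishing, so that the hypothesis of indecomposability of the $\Theta(i)$ can be relaxed. Granted either of these inputs, Theorem~\ref{existence of stratifying system} then produces indecomposable modules $\{Y(i)\}_{i=1}^N$ completing $\{(\Theta(i), Y(i))\}$ into a stratifying system for $B$, as required.
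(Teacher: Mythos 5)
Your overall strategy coincides with the paper's: feed the family of cell modules $\ind_l S'(\blam)$, linearly ordered compatibly with $(\Lambda,\leq)$, into Theorem~\ref{existence of stratifying system}, using Lemmas~\ref{Hom between two cell modules vanishing condition for type C} and~\ref{Ext between two cell modules vanishing condition for type C} for the $\Hom$ and $\Ext^1$ vanishing. Your treatment of incomparable pairs at a fixed layer $l$ is correct and in fact more explicit than the paper's: the proofs of those two lemmas only use $\bmu\ntrianglerighteq\blam$ (resp.\ $\bmu\ntriangleright\blam$) via Propositions~\ref{Hom between Specht module of hyperoctahedral group for type C} and~\ref{Ext between Specht module of hyperoctahedral group for type C}, so they do cover incomparable bi-partitions.

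The genuine gap is the indecomposability of the cell modules, which you correctly flag as the delicate hypothesis of Theorem~\ref{existence of stratifying system} but then do not establish. Neither of your escape routes works as stated: citing the walled Brauer algebra analysis is not an argument for $\B(C_r,\delta)$, and the indecomposability hypothesis in \cite{ES} cannot simply be dropped --- the modules $Y(i)$ produced there are relative projective covers of the $\Theta(i)$, so a decomposable $\Theta(i)$ would yield a decomposable $Y(i)$, violating Definition~\ref{definition of stratifying system}. The fix is short and already contained in the machinery you invoked: the adjunction computation in the proof of Lemma~\ref{Hom between two cell modules vanishing condition for type C}, specialised to $l=m$ and $\blam=\bmu$, gives $\End_B(\ind_l S'(\blam))\cong \End_{\W_{r-l}}(S'(\blam))$, and the latter is local for $\ch~K\neq 2$ (via the Morita equivalence of Section~\ref{Morita equivalence for Hecke algebras for type C} it is a tensor product of endomorphism rings of dual Specht modules for symmetric groups, each of which is $K$ in odd characteristic). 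Hence each $\ind_l S'(\blam)$ is indecomposable and Theorem~\ref{existence of stratifying system} applies; this is precisely how the paper closes the argument.
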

\begin{proof}
The cell modules for $B$ are given by the set $\{\ind_l S'(\blam): 0 \leq l \leq r, \blam \in \Lam_{r-l}\}$. From (\ref{Hom between two indl for type C}), we have
\begin{align*}
    \Hom_B(\ind_l S'(\blam), \ind_l S'(\bmu)) 
    & \cong \Hom_{\W_{r-l}} (S'(\blam), S'(\bmu)  ).
\end{align*}
In particular, this implies $\End_B (\ind_l S(\blam)) \cong \End_{\W_{r-l}} (S(\blam))$. Since each endomorphism ring on the right hand side is local, it follows that all cell modules indexed by elements of $\Lambda$ are indecomposable. It remains to verify that the set of cell modules of $B$ satisfies conditions (1) and (2) of Theorem \ref{existence of stratifying system}. If $\mathrm{char}~K \neq 2,3$ and $\delta \neq 0$, the conditions (1) and (2) of the Theorem \ref{existence of stratifying system} are satisfied by Lemma \ref{Hom between two cell modules vanishing condition for type C} and Lemma \ref{Ext between two cell modules vanishing condition for type C}, respectively. Hence, the proof follows.
\end{proof}
\begin{corollary}\label{cell-filtration multiplicity are well-dependent for type C}
Let $\mathrm{char}~K \neq 2,3$, and $\delta \neq 0$. If a $B$-module $M$ admits a cell filtration, then the filtration multiplicities are independent of the choice of filtration. 
\end{corollary}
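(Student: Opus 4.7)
The plan is to leverage the stratifying system produced by Proposition \ref{stratifying system of brauer algebra of type C} to extract each multiplicity $[M:\Theta(l,\blam)]$ as an invariant of $M$. Concretely, I pair $M$ against the Young modules $Y(l,\blam)$ and use the invariant $\phi_{l,\blam}(M):=\dim_K\Hom_B(M,Y(l,\blam))$; if I can show that the matrix of pairings $\dim_K\Hom_B(\Theta(l',\blam'),Y(l,\blam))$ is invertible, then the multiplicities are recovered as invariants of $M$.

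First I would show that $\phi_{l,\blam}$ is additive on short exact sequences inside $\F_B(\Theta)$. This is immediate from condition (iii) of Definition \ref{definition of stratifying system}, which gives $\Ext^1_B(\F_B(\Theta),Y(l,\blam))=0$. Consequently the contravariant functor $\Hom_B(-,Y(l,\blam))$ is exact on any cell filtration $M=M_0\supseteq M_1\supseteq\cdots\supseteq M_{k+1}=0$ with $M_i/M_{i+1}\cong\Theta(l_i,\blam_i)$, yielding
\[
\phi_{l,\blam}(M)=\sum_{(l',\blam')\in\Lambda}[M:\Theta(l',\blam')]\cdot\dim_K\Hom_B(\Theta(l',\blam'),Y(l,\blam)).
\]

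Next I would compute the matrix $A_{(l',\blam'),(l,\blam)}:=\dim_K\Hom_B(\Theta(l',\blam'),Y(l,\blam))$ by applying $\Hom_B(\Theta(l',\blam'),-)$ to the defining sequence $0\to\Theta(l,\blam)\to Y(l,\blam)\to Z(l,\blam)\to0$ of condition (ii). Since the construction of the stratifying system in Theorem \ref{existence of stratifying system} reverses our partial order (Lemma \ref{Hom between two cell modules vanishing condition for type C} kills $\Hom$ when the \emph{second} argument dominates, while Definition \ref{definition of stratifying system}(i) kills $\Hom$ when the \emph{first} does), the module $Z(l,\blam)$ is filtered by cells $\Theta(j,\bmu)$ with $(j,\bmu)>(l,\blam)$ in the partial order of Section \ref{Brauer algebras of type C for type C}. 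Hence for $(l',\blam')<(l,\blam)$ one has $(j,\bmu)>(l,\blam)>(l',\blam')$, so Lemma \ref{Hom between two cell modules vanishing condition for type C} forces both $\Hom_B(\Theta(l',\blam'),\Theta(l,\blam))=0$ and $\Hom_B(\Theta(l',\blam'),Z(l,\blam))=0$, giving $A_{(l',\blam'),(l,\blam)}=0$. When $(l',\blam')=(l,\blam)$, the same reasoning annihilates the contribution of $Z(l,\blam)$, while Lemma \ref{Ext between two cell modules vanishing condition for type C} shows $\Ext^1_B(\Theta(l,\blam),\Theta(l,\blam))=0$, so the long exact sequence collapses to $A_{(l,\blam),(l,\blam)}=\dim_K\End_B(\Theta(l,\blam))\neq 0$.

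In any linear extension of the partial order on $\Lambda$ making $(l,\blam)$ increase, $A$ is therefore triangular with nonzero diagonal, hence invertible. The linear system above then expresses each multiplicity $[M:\Theta(l',\blam')]$ as a linear combination of the scalars $\phi_{l,\blam}(M)=\dim_K\Hom_B(M,Y(l,\blam))$, which depend only on the isomorphism type of $M$ and not on the chosen filtration. This completes the argument. The only delicate point is the bookkeeping of the two opposite ordering conventions outlined above; once that is properly aligned, the triangularity argument is routine, and this is indeed where I expect the main care to be required.
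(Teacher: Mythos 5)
Your argument is correct and is essentially the paper's own route: the paper deduces the corollary in one line from Proposition \ref{stratifying system of brauer algebra of type C} together with Lemma 1.4 of Erdmann--S\'aenz, and that cited lemma is proved by exactly the pairing-and-triangularity computation you carry out (additivity of $\dim_K\Hom_B(-,Y(l,\blam))$ via axiom (iii), then inverting the unitriangular matrix $\dim_K\Hom_B(\Theta(l',\blam'),Y(l,\blam))$). The ordering bookkeeping you worry about is harmless once one works with the total order on $\{1,\dots,N\}$ supplied by Theorem \ref{existence of stratifying system}, with respect to which conditions (i) and (ii) of Definition \ref{definition of stratifying system} already cover poset-incomparable pairs.
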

\begin{proof}
The proof is an immediate consequence of Proposition \ref{stratifying system of brauer algebra of type C} and [\cite{ES}, Lemma 1.4].
\end{proof}

\subsection{Stabilizer of a partial diagram}\label{stabilizer of a partial diagram for type C}

In this subsection, our goal is to determine the $\W_{l}$-stabilizer of a partial diagram $v$ in $V_l$ which contains the partial diagram of $e_l$. More precisely, we aim to identify $\Stab_{\W_{l}} (v)$, where the partial diagram $v \in V_l$ is described in  Figure \ref{Partial diagram having exactly l horizontal edges for type C}.
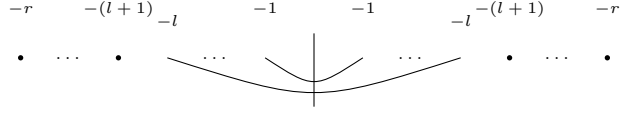
\begin{figure}[H]
\begin{tikzpicture}[x=0.65cm,y=0.65cm]
\draw[-] (1,0).. controls(4,-0.95)..(7,0);
 \draw[-] (3,0).. controls(4,-0.650)..(5,0);
\draw[-] (4,0.5)--(4,-1);
\fill (0,0) circle[radius = 1pt];
\fill (-2,0) circle[radius = 1pt];
\fill (8,0) circle[radius = 1pt];
\fill (10,0) circle[radius = 1pt];
\node at (-2,1) {\tiny $ -r$};
\node at (0,1) {\tiny $-(l+1)$};
\node at (3,1) {\tiny $-1$};
\node at (1,0.75) {\tiny $-l$};
\node at (10,1) {\tiny $ -r$};
\node at (8,1) {\tiny $-(l+1)$};
\node at (5,1) {\tiny $-1$};
\node at (7,0.75) {\tiny $-l$};
\node at (-1,0) {\tiny $\cdots$};
\node at (2,0) {\tiny $\cdots$};
\node at (6,0) {\tiny $\cdots$};
\node at (9,0) {\tiny $\cdots$};
\end{tikzpicture}
\caption{Partial diagram having exactly $l$ horizontal edges.}\label{Partial diagram having exactly l horizontal edges for type C}
\end{figure} 

The algebra $\W_{l}$ acts on $v$ by permuting its horizontal edges. A partial diagram $v$ remains invariant under the following operations: 
\begin{enumerate}
    \item[1.] interchanging the starting and ending points of a horizontal edge, 
    \item[2.] swapping one horizontal edge with another, 
    \item[3.] performing both operations simultaneously.
\end{enumerate}
Interchanging the initial and terminal vertices of each horizontal edge contributes $l$ copies of $\Z/2\Z$, while swapping one horizontal edge to another horizontal edge corresponds to symmetric group $\Sg_{l}$. These symmetries together generate $\W_l=(\mathbb{Z}_2)^{l}\rtimes \Sg_l$, all of them stabilize the partial diagram $v$. Thus, the stabilizer of $v$ under the action of $\W_l$ is the entire group algebra $\W_l$, and we define the stabilizer subalgebra $\h_l$ by %
\[
\h_{l}:=\Stab_{\W_{l}}(v)= \W_{l}.
\]

\subsection{Cell filtration}
In this section, we prove that both the permutation module and the Young module of $B$ admit a cell filtration. 

\begin{theorem}\label{cell filtration of permutation module for type C}
Let $\mathrm{char}~K \neq 2,3$, and $\delta \neq 0$. For $(l,\blam) \in \Lambda$, the permutation module $M(l,\blam)$ has a cell filtration. 
\end{theorem}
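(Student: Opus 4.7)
The plan is to lift a dual Specht filtration of the $\W_{r-l}$-module $M(\blam)$ to a cell filtration of $M(l,\blam) = \Ind_l M(\blam)$. First I would establish a version of Young's rule for $\W_{r-l}$: the permutation module $M(\blam)$ admits a filtration by dual Specht modules $S'(\bomu)$ with $\bomu \trianglerighteq \blam$. Writing $\blam = (\alpha, \beta)$ with $\alpha \in \Lambda_b$ and $\beta \in \Lambda_{r-l-b}$, the Morita equivalence \eqref{Morita equivalent of Wn for type C} (which uses $\ch K \neq 2$) sends $M(\blam)$ to $M^\alpha \otimes M^\beta$ over $K\Sg_b \otimes K\Sg_{r-l-b}$. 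Classical Young's rule gives each of $M^\alpha$ and $M^\beta$ a dual Specht filtration, and hence $M^\alpha \otimes M^\beta$ a filtration by $S_\gamma \otimes S_\nu$ with $\gamma \trianglerighteq \alpha$ and $\nu \trianglerighteq \beta$. Pulling back via $G$ and applying Corollary \ref{Image of dual Specht under Morita is the tensor product of dual Specht} yields the desired filtration.

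Next I would lift this filtration through the functor $\Ind_l = -\otimes_{\W_{r-l}} e_lB$. To obtain the required exactness, I analyse $e_lB$ via the ideal filtration
\[
e_lB = e_lJ_l \supseteq e_lJ_{l+1} \supseteq \cdots \supseteq e_lJ_r \supseteq 0.
\]
By Proposition \ref{iterated inflation of Brauer algebras of type C for type C}, each subquotient $e_lJ_m/e_lJ_{m+1}$ is isomorphic as a $(\W_{r-l}, B/J_{m+1})$-bimodule to $(e_lV_m) \otimes_K V_m \otimes_K \W_{r-m}$. Using the stabiliser calculation $\h_l = \W_l$ from Subsection \ref{stabilizer of a partial diagram for type C}, a double-coset analysis decomposes $e_lV_m$ as a direct sum of left $\W_{r-l}$-modules induced from Young subgroups, which exhibits $e_lB$ as flat over $\W_{r-l}$. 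Consequently $\Ind_l$ is exact on the relevant category, so applying it to the filtration of the first step produces a filtration of $M(l,\blam)$ whose subquotients are $\Ind_l S'(\bomu)$ for $\bomu \trianglerighteq \blam$.

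Finally I refine each $\Ind_l S'(\bomu)$ to a cell filtration. Tensoring $S'(\bomu)$ against the ideal filtration of $e_lB$ (using the flatness above), the $m$-th subquotient becomes $S'(\bomu) \otimes_{\W_{r-l}} (e_lV_m \otimes_K V_m \otimes_K \W_{r-m})$, and reorganising via transitivity of tensor products rewrites this as $\ind_m X_{\bomu,m}$ for an explicit permutation-type $\W_{r-m}$-module $X_{\bomu,m}$. A further application of Young's rule over $\W_{r-m}$, combined with exactness of $\ind_m$, refines each such piece into cell modules $\ind_m S'(\bonu)$. Splicing these refinements into the filtration of the previous step yields the desired cell filtration of $M(l,\blam)$. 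The principal technical obstacle lies in this third step: one must carry out the double-coset bookkeeping to identify $e_lV_m$ explicitly as a left $\W_{r-l}$-module and, from this, describe the $\W_{r-m}$-module $X_{\bomu,m}$ concretely enough to reapply Young's rule. The stabiliser calculation $\h_l = \W_l$ is the key input that makes this combinatorial bookkeeping tractable.
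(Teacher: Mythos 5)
Your proposal reverses the order of operations in the paper's proof, and the reversal is where it breaks. The paper filters $M(l,\blam)=M(\blam)\otimes_{\W_{r-l}}e_lB$ by the ideals $e_lJ_m$ \emph{first}, while the coefficient module is still the permutation module $M(\blam)$; the double-coset/stabiliser analysis (with $\h_{m-l}=\W_{m-l}$) then identifies each layer subquotient as $\ind_m$ of a direct sum of genuine permutation modules $M(\banu)$ of $\W_{r-m}$, and only at that point is Young's rule ([Ma, Cor.\ 5.11]) invoked, together with exactness of $\ind_m$. You instead apply Young's rule first over $\W_{r-l}$ and push the resulting dual Specht filtration through $\Ind_l$ and then through the layer filtration.

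This creates two genuine gaps. First, $e_lB$ is not flat over $\W_{r-l}$: as a left $\W_{r-l}$-module each layer $e_l(J_m/J_{m+1})$ is built from $\Ind_{\W_{m-l}\times\W_{r-m}}^{\W_{r-l}}\big(1_{\W_{m-l}}\boxtimes \W_{r-m}\big)$, and the trivial $\W_{m-l}$-module is not projective in positive characteristic. So exactness of $\Ind_l$ on your dual Specht filtration of $M(\blam)$ is not automatic; it would require a $\Tor_1$-vanishing statement (essentially $\Tor_1^{\W_{m-l}}(-,1)=0$ on restrictions of dual Specht modules) that you neither state nor prove. Second, and more seriously, the module you call $X_{\bomu,m}$ is not of ``permutation type'': up to $\ind_m$ it is the module of $\h_{m-l}$-coinvariants of $\Res^{\W_{r-l}}_{\W_{r-m}\times\W_{m-l}}S'(\bomu)$, and since $S'(\bomu)$ is not a permutation module you cannot ``reapply Young's rule'' to it. Making that step rigorous would require a characteristic-free \emph{restriction} analogue of the Littlewood--Richardson rule for $\W_{r-l}$ together with control of the (only right-exact) coinvariants functor on dual-Specht-filtered modules --- neither of which is available in the paper. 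The paper's ordering avoids both problems precisely because coinvariants of a permutation module are again, visibly via double cosets, a permutation module. The fix is to restructure your argument so that the layer filtration is applied before Young's rule.
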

\begin{proof}
The algebra $B$ has a filtration by two-sided ideals $B = J_0 \supseteq J_{1} \supseteq \cdots \supseteq J_{r-1} \supseteq J_{r} \supseteq 0$. For each $0 \leq m \leq r$, this induces a short exact sequence of $B$-$B$ bimodules 
\[
\begin{tikzcd}
    0 \arrow[r] & J_{m+1} \arrow[r] & J_m \arrow[r] &J_m/J_{m+1} \arrow[r] & 0.
\end{tikzcd}
\]
 Since the functor $e_l B\otimes_B -$ is exact, we get the following short exact sequence: 
\[
\begin{tikzcd}
    0 \arrow[r] & e_lJ_{m+1} \arrow[r] & e_lJ_m \arrow[r] & e_l(J_m/J_{m+1}) \arrow[r] & 0.
\end{tikzcd}
\]
The above sequence is split exact as a left $\W_{r-l}$-module, due to the embedding $e_l(J_m/J_{m+1}) \hookrightarrow e_lJ_m$. Applying the exact functor $M(\blam) \otimes_{ \W_{r-l}} -$ to the above split exact sequence yields a short exact sequence of $\W_{r-l}$-$B$-bimodules:
\[
\begin{tikzcd}
0 \arrow[r] & M(\blam)\otimes_{\W_{r-l}} e_lJ_{m+1} \arrow[r] & M(\blam) \otimes_{\W_{r-l}} e_lJ_{m}   \arrow[r] & M(\blam)\otimes_{\W_{r-l}} e_l(J_{m}/ J_{m+1})  \arrow[r] &0.
\end{tikzcd} 
\]
This gives a filtration of $M(l,\blam) =M(\blam)\otimes_{ \W_{r-l}} e_lB= \Ind_{l} M(\blam)$ as:
\begin{align*}
M(\blam) \otimes_{ \W_{r-l}} e_lB \supseteq M(\blam) \otimes_{\W_{r-l}} e_lJ_1 \supseteq \cdots \supseteq  M(\blam) \otimes_{\W_{r-l}} e_lJ_{r-1}  \supseteq M(\blam) \otimes_{\W_{r-l}} e_lJ_r \supseteq 0,
\end{align*}
with subquotients $M^{m}(l,\blam):= M(\blam) \otimes_{\W_{r-l}} e_l(J_{m}/ J_{m+1})$, for $l \leq m\leq r$. If $m<l$, then $e_l(J_m/J_{m+1})=0$, so we only need to show that each subquotient $M^{m}(l,\blam)$ has a cell filtration.

The permutation module $M(l,\blam)$ consists of elements of the form $m\otimes_{\W_{r-l}} e_ld$, where $m \in M(\blam), d \in B$, and $e_ld \in B$ has at least $l$ horizontal edges containing the partial diagram of $e_l$ in its top. Similarly, for each $m \geq l$, the $K$-space $M^m(l,\blam)$ has a basis consisting of elements of the form $m \otimes_{\W_{r-l}} e_ld$, where $m \in M(\blam)$, $d\in J_m/J_{m+1}$ and $e_ld$ has exactly $m$ horizontal edges containing the partial diagram of $e_l$ in its top. The remaining $m-l$ symmetric horizontal edges in the top configuration of $e_ld$ are connected outside the set of vertices $\{-l,-(l-1), \cdots, -1,1, \cdots, l-1,l\}$. 
Since the elements of $M^m(l,\blam)$ lie in the tensor product over $\W_{r-l}$, for any $d \in B$, there exists $\bosig \in \W_{r-l}$ such that $\bosig$ permutes the additional $m-l$ horizontal edges into the partial diagram of $e_m$. Figure \ref{Any element of Wn-l will permute additional m-l horizontal edges} illustrates an example of this scenario.
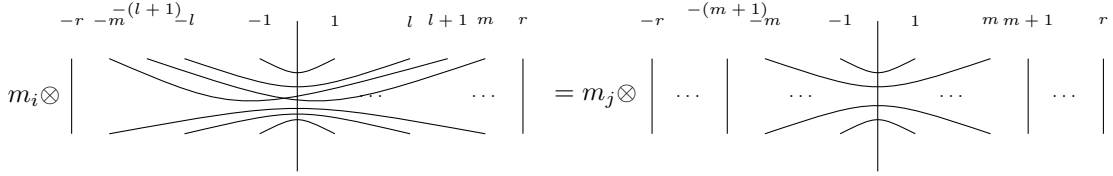
\begin{figure}[H]
\begin{tikzpicture}[x=0.5cm,y=1cm]
\draw[-] (-2,0)-- (-2,-1);
\draw[-] (0,0).. controls(4.5,-0.75).. (9,0);
\draw[-] (-1,0).. controls(2.5,-0.75).. (8,0);
\draw[-] (1,0).. controls(4,-0.50)..(7,0);
\draw[-] (3,0).. controls(4,-0.25)..(5,0);
\draw[-] (4,0.5)-- (4,-1.5);
\draw[-] (10,0)-- (10,-1);
\draw[-] (1,-1).. controls(4,-0.65)..(7,-1);
\draw[-] (3,-1).. controls(4,-0.75)..(5,-1);
\draw[-] (-1,-1).. controls(4,-0.55)..(9,-1);
\node at (-3,-0.5) {$m_i \otimes$};
\node at (-2,0.5) {\tiny $-r$};
\node at (0,0.65) {\tiny $-(l+1)$};
\node at (-1,0.5) {\tiny $-m$};
\node at (1,0.5) {\tiny $-l$};
\node at (3,0.5) {\tiny $-1$};
\node at (5,0.5) {\tiny $1$};
\node at (6,-0.5) {\tiny $\cdots$};
\node at (7,0.5) {\tiny $l$};
\node at (8,0.5) {\tiny $l+1$};
\node at (9,-0.5) {\tiny $\cdots$};
\node at (9,0.5) {\tiny $m$};
\node at (10,0.5) {\tiny $r$};
\end{tikzpicture}
\begin{tikzpicture}[x=0.5cm,y=1cm]
\draw[-] (-2,0)-- (-2,-1);
\draw[-] (0,0)-- (0,-1);
\draw[-] (1,0).. controls(4,-0.5)..(7,0);
\draw[-] (3,0).. controls(4,-0.25)..(5,0);
\draw[-] (4,0.5)-- (4,-1.5);
\draw[-] (8,0)-- (8,-1);
\draw[-] (10,0)-- (10,-1);
\draw[-] (1,-1).. controls(4,-0.5)..(7,-1);
\draw[-] (3,-1).. controls(4,-0.75)..(5,-1);
\node at (-3.5,-0.5) {$=m_j \otimes $};
\node at (-2,0.5) {\tiny $-r$};
\node at (-1,-0.5) {\tiny $\cdots$};
\node at (0,0.65) {\tiny $-(m+1)$};
\node at (1,0.5) {\tiny $-m$};
\node at (2,-0.5) {\tiny $\cdots$};
\node at (3,0.5) {\tiny $-1$};
\node at (5,0.5) {\tiny $1$};
\node at (6,-0.5) {\tiny $\cdots$};
\node at (7,0.5) {\tiny $m$};
\node at (8,0.5) {\tiny $m+1$};
\node at (9,-0.5) {\tiny $\cdots$};
\node at (10,0.5) {\tiny $r$};
\end{tikzpicture}
\caption{Any element of $\W_{r-l}$ will permute additional $m-l$ horizontal edges. }\label{Any element of Wn-l will permute additional m-l horizontal edges}
\end{figure}

Thus, the basis elements of $M^{m}(l,\blam)$ are of the form
\[
m_i \otimes e_l d=m_i\bosig^{-1} \otimes \bosig e_ld= m_j \otimes e_m d,
\]
where $e_md \in e_l (J_m/J_{m+1})$ and $m_i$ and $m_j$ are basis elements of $M(\blam)$. Define the group algebra of stabilizer subgroup of $\W_{r-l}$ by $\h_{m-l}(e_md)=\{\pmb{\omega}\in \W_{r-l}:\pmb{\omega} \cdot e_md= e_m d\}$. In particular, the elements of $\h_{m-l}(e_md)$ only permute the vertices $\{-(r-m),\cdots, -(r-l+1),(r+l+1), \cdots, (r+m)\}$. By Section \ref{stabilizer of a partial diagram for type C}, we have $\h_{m-l}(e_md)= \W_{m-l}$. Since this is independent of the choice of $e_md$, we write $\h_{n-l}$ instead of $\h_{n-l}(e_md)$. Additionally, the equality $m_i \otimes e_md=m_j \otimes e_m d$ holds if there exists $\pmb{\omega} \in \h_{m-l}$ such that $m_i \pmb{\omega}=m_j$, since $m_i \otimes e_m d=m_i \otimes \pmb{\omega} \cdot e_md=m_i\cdot \pmb{\omega} \otimes e_md=m_j \otimes e_md $. Hence, the subquotient consists of elements of the form $\{m\otimes e_m d:m \in M_m , e_m d \in e_l (J_m/J_{m+1})\}$, where $M_m$ is the $\W_{r-l}$-module defined by
\[
M_m=\frac{M(\blam)}{\langle m-m'\pmb{\omega}:m,m' \in M(\blam), \pmb{\omega} \in \h_{m-l}\rangle}.
\]
The $\W_{r-l}$-module $M_m$ is the largest quotient of $M(\blam)$ on which $\h_{m-l}$ acts trivially. Thus, we have $M_m=\W_{\blam}\backslash \W_{r-l}/ \h_{m-l}.$ Each representative of $M_m$ corresponds to an orbit of $\h_{m-l}$ on $\W_{\blam}\backslash \W_{r-l}$. The induced action of $\W_{r-m}$ on $M_m$ permutes these basis elements, making $M_m$ a permutation module of $\W_{r-m}$. To show that $M_m$ is the Young permutation module of $\W_{r-m}$, it remains to prove that the stabilizer of $M_m$ is isomorphic to a Young subgroup of $\W_{r-m}$. Take any element $\bal \in \W_{r-l}$. Since $\W_{r-m}$ commutes with $\h_{m-l}$, we have
 \begin{align*}
    \Stab_{\W_{r-m}} (\W_{\blam} \bal \h_{m-l}) &= \{\bal' \in \W_{r-m}:(\W_{\blam} \bal \h_{m-l})\cdot \bal'= \W_{\blam} \bal \h_{m-l}\}\\
    &= \{\bal' \in \W_{r-m}:(\W_{\blam} (\bal\cdot \bal') \h_{m-l})= \W_{\blam} \bal \h_{m-l}\}\\
    &= \Stab_{\W_{r-m}} (\W_{\blam} \bal) \\
    &=\W_{r-m} \cap \bal^{-1} \W_{\blam} \bal.
 \end{align*}
 Let us denote the group algebra of the stabilizer subgroup $\W_{r-m} \cap \bal^{-1} \W_{\blam} \bal$ by $\W_{\banu}$ for some $\banu \in \Lam_{r-m}$. Hence, the stabilizer of the permutation module corresponding to the double coset representative $\bal\in \W_{\blam}\backslash \W_{r-m}/ \h_{m-l}$ is given by $\W_{\banu}$, which is a Young subgroup of $\W_{r-m}$.

 Therefore, we can express $M^{m}(l,\blam)\cong \big(\ind_m \underset{\banu}{\oplus} M(\banu)\big)$, for each $(m,\banu)\in \Lambda$. If $\ch~K \neq 2$, then by [\cite{Ma}, Corollary 5.11], the permutation module $M(\banu)$ has a dual Specht filtration. Moreover, the exact functor $\ind_m$ sends each dual Specht module to a cell module. Thus, $M^m(l,\blam)$ has a cell filtration, since $\ind_m S'(\banu)$ is a cell module for every $(m,\banu)\in \Lambda$. 
 \end{proof}

\begin{corollary}
    Let $\mathrm{char}~K \neq 2,3$, and $\delta \neq 0$. For $(l,\blam) \in \Lambda$, every direct summand of $M(l,\blam)$ admits has a cell filtration. In particular, the Young module $Y (l,\blam)$ admits a cell filtration. 
\end{corollary}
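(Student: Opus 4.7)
The proof plan is to invoke the stratifying system machinery set up in Proposition \ref{stratifying system of brauer algebra of type C}. By Theorem \ref{cell filtration of permutation module for type C}, the permutation module $M(l,\blam)$ lies in $\F_B(\Theta)$. The Young module $Y(l,\blam)$ is by definition a direct summand of $M(l,\blam)$, so it suffices to establish the general statement that every direct summand of a module with a cell filtration again admits a cell filtration, i.e.\ that $\F_B(\Theta)$ is closed under direct summands. Once this closure property is in place, the corollary is immediate.

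To prove closure under direct summands, I would appeal to the standard theory of stratifying systems developed by Erdmann--S\'aenz in \cite{ES}. Given Lemma \ref{Hom between two cell modules vanishing condition for type C} and Lemma \ref{Ext between two cell modules vanishing condition for type C} (valid because $\mathrm{char}~K\neq 2,3$ and $\delta\neq 0$), the hypotheses of Theorem \ref{existence of stratifying system} are satisfied, and Proposition \ref{stratifying system of brauer algebra of type C} produces indecomposable modules $Y(l,\blam)'$ together with short exact sequences and the crucial vanishing $\Ext_B^1(\F_B(\Theta),Y')=0$, where $Y'=\bigoplus_{(l,\blam)\in\Lambda}Y(l,\blam)'$. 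This Ext-vanishing is exactly the property needed to conclude that $\F_B(\Theta)$ is a resolving and summand-closed subcategory of $\text{\textbf{mod}-}B$; an application of the usual horseshoe/splitting argument (or directly [\cite{ES}, Lemma 1.5]) then shows that if $M=M_1\oplus M_2$ lies in $\F_B(\Theta)$, so do both summands.

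Applying this closure property to the decomposition of $M(l,\blam)$ into indecomposables yields that every direct summand of $M(l,\blam)$ admits a cell filtration. Specializing to the unique direct summand $Y(l,\blam)$ constructed in Theorem \ref{existence and uniqueness of Young module of type C} gives the second assertion. The main technical obstacle is simply ensuring that the Ext-vanishing input supplied by Lemma \ref{Ext between two cell modules vanishing condition for type C} is strong enough to invoke the Erdmann--S\'aenz closure result; since that lemma gives $\Ext_B^1(\Theta(m,\bmu),\Theta(l,\blam))=0$ whenever $(l,\blam)\geq(m,\bmu)$, the stratifying system formalism applies verbatim and no further algebra-specific argument is needed.
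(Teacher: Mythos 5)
Your proposal is correct and follows essentially the same route as the paper: both reduce the corollary to Theorem \ref{cell filtration of permutation module for type C} plus the fact that $\F_B(\Theta)$ is closed under direct summands, a closure property guaranteed by the Hom- and Ext-vanishing of Lemmas \ref{Hom between two cell modules vanishing condition for type C} and \ref{Ext between two cell modules vanishing condition for type C}. The only difference is the reference for that closure: the paper cites Donkin's Proposition A2.2(vi) directly, whereas you route it through the Erdmann--S\'aenz stratifying-system formalism, which rests on the same vanishing conditions.
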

\begin{proof}
    By Theorem \ref{cell filtration of permutation module for type C}, the permutation module $M(l,\blam)$ has a cell filtration, and hence lies in the category $ \F_B(\Theta)$. It follows from [\cite{Do}, Proposition A2.2(vi)] that all the direct summands of $M(l,\blam)$ also belong to $\F_B(\Theta)$. Consequently,  $Y(l,\blam) \in \F_B(\Theta)$.
\end{proof}

\subsection{Exactness of the functor $\Hom_B(M(l,\blam), -)$}

In this section, we prove that the functor $\Hom$ is exact in the category of cell-filtered $B$-modules $\F_B(\Theta)$. We say a $\W_r$-module $N$ has a \textit{dual Specht} \textit{filtration} if there exists a chain of submodules
\[
N=N_1 \supseteq N_2 \supseteq \cdots \supseteq N_k \supseteq 0,
\]
such that each subquotient is isomorphic to a dual Specht module of $\W_r$. Let the category of finitely generated right $\W_r$-modules having a dual Specht filtration be denoted by  $\F_{\W_r}(S)$.

\begin{lemma}\label{LR rule}
Let $\ch~K \neq 2$ and $r=r_1+r_2$. If $\bolam=(\lambda^1,\lambda^2) \in \Lam_{r_1} $ and $\bmu =(\mu^1,\mu^2) \in \Lam_{r_2}$, then $$\Ind_{\W_{r_1} \times \W_{r_2}}^{\W_r} S'(\bolam) \otimes_K S'(\bmu) \cong \bigoplus_{\bonu=(\nu^1,\nu^2)\in \Lam_r} L_{\bolam, \bmu}^{\bonu} S'(\bonu),$$ where $L_{\bolam, \bmu}^{\bonu}=\prod_{i=1}^2 L_{\lambda^i,\mu^i}^{\nu^{i}}$; and for each $i$, $L_{\lambda^i,\mu^i}^{\nu^{i}}$ is the Littlewood-Richardson coefficient.
\end{lemma}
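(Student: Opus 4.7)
\end{lemma}

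\begin{proof}
The plan is to transport the problem through the Morita equivalence of Dipper--James (Subsection~\ref{Morita equivalence for Hecke algebras for type C}) to the symmetric group side, apply the classical Littlewood--Richardson rule for dual Specht modules, and pull the decomposition back. Write $b_1 = |\lambda^1|$ and $b_2 = |\mu^1|$. By Corollary~\ref{Image of dual Specht under Morita is the tensor product of dual Specht}, the Morita functor $F$ sends $S'(\bolam)$ to $S_{\lambda^1} \otimes_K S_{\lambda^2}$ in the block $K\Sg_{b_1, r_1 - b_1}$, and $S'(\bmu)$ to $S_{\mu^1} \otimes_K S_{\mu^2}$ in the block $K\Sg_{b_2, r_2 - b_2}$.

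The first step is to check that $\Ind_{\W_{r_1} \times \W_{r_2}}^{\W_r}$ intertwines with the appropriate induction on the symmetric group side. Since the orthogonal idempotents $e_{b, r-b}$ of $\W_r$ are built from the elements $v_{b, r-b}$ of \cite{DJ92} via subgroup data, the product idempotent $e_{b_1, r_1 - b_1} \otimes e_{b_2, r_2 - b_2}$ in $\W_{r_1} \otimes \W_{r_2}$ is compatible with $e_{b_1 + b_2, r - b_1 - b_2}$ in $\W_r$. A direct calculation with the tensor product $M \otimes_{\W_{r_1} \otimes \W_{r_2}} \W_r e_{b_1 + b_2, r - b_1 - b_2}$ then yields
\[
F\bigl(\Ind_{\W_{r_1} \times \W_{r_2}}^{\W_r}(S'(\bolam) \otimes_K S'(\bmu))\bigr) \cong \Ind_{\Sg_{b_1, r_1-b_1, b_2, r_2-b_2}}^{\Sg_{b_1+b_2, r - b_1 - b_2}}\bigl(S_{\lambda^1} \otimes S_{\lambda^2} \otimes S_{\mu^1} \otimes S_{\mu^2}\bigr),
\]
with the image sitting entirely in the Morita block of $\W_r$ indexed by $b = b_1 + b_2$.

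Next, splitting $\Sg_{b_1+b_2, r-b_1-b_2} = \Sg_{b_1 + b_2} \times \Sg_{r - b_1 - b_2}$ as a direct product and applying the classical Littlewood--Richardson rule for dual Specht modules of symmetric groups (see \cite{JaB}) to each factor separately gives
\[
\Ind_{\Sg_{b_1} \times \Sg_{b_2}}^{\Sg_{b_1+b_2}}(S_{\lambda^1} \otimes S_{\mu^1}) \otimes \Ind_{\Sg_{r_1-b_1} \times \Sg_{r_2-b_2}}^{\Sg_{r-b_1-b_2}}(S_{\lambda^2} \otimes S_{\mu^2}) \cong \bigoplus_{\nu^1,\nu^2} L^{\nu^1}_{\lambda^1, \mu^1} L^{\nu^2}_{\lambda^2, \mu^2} \bigl(S_{\nu^1} \otimes_K S_{\nu^2}\bigr).
\]
Pulling back via $G$ and using Corollary~\ref{Image of dual Specht under Morita is the tensor product of dual Specht} once more to identify $S_{\nu^1} \otimes_K S_{\nu^2}$ with $S'(\bonu)$ for $\bonu = (\nu^1, \nu^2) \in \Lam_r$ yields the claimed decomposition with $L^{\bonu}_{\bolam, \bmu} = L^{\nu^1}_{\lambda^1, \mu^1} L^{\nu^2}_{\lambda^2, \mu^2}$.

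The main obstacle is the intertwining step: rigorously verifying that $\Ind_{\W_{r_1} \times \W_{r_2}}^{\W_r}$ corresponds under the Morita equivalence to the symmetric group induction indicated above, and confirming that the image sits entirely in the Morita block indexed by $b_1 + b_2$. Once this compatibility is in hand, the rest is formal, combining the symmetric-group Littlewood--Richardson rule with the tensor-product compatibility of $G$ recorded in the proof of Corollary~\ref{Image of dual Specht under Morita is the tensor product of dual Specht}.
\end{proof}
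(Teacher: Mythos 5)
The paper's own ``proof'' of this lemma is a single sentence deferring to the argument of [RY03, Theorem 4.4], so your route --- transporting the statement through the Dipper--James Morita equivalence, applying the classical Littlewood--Richardson rule block by block on the symmetric-group side, and pulling back with $G$ --- is genuinely different and, in spirit, more self-contained within the framework the paper has already set up (Proposition~\ref{Image of Specht under Morita is the tensor product of Specht} and Corollary~\ref{Image of dual Specht under Morita is the tensor product of dual Specht}). What your approach buys is that all the combinatorics is reduced to the well-known symmetric-group case, and the block bookkeeping is transparent: since $L^{\nu^1}_{\lambda^1,\mu^1}=0$ unless $|\nu^1|=|\lambda^1|+|\mu^1|$, everything indeed lands in the single Morita block indexed by $b_1+b_2$, consistent with the fact that induction preserves the $(\Z/2\Z)^r$-isotype that governs the block decomposition.

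That said, the proof as written has a real gap, and you have located it yourself: the intertwining identity
$F\bigl(\Ind_{\W_{r_1}\times\W_{r_2}}^{\W_r}(-)\bigr)\cong \Ind_{\Sg_{b_1,r_1-b_1,b_2,r_2-b_2}}^{\Sg_{b_1+b_2,\,r-b_1-b_2}}\bigl((F_1\otimes F_2)(-)\bigr)$
is asserted via ``a direct calculation'' but never carried out. Unwinding the definitions, this amounts to an isomorphism of $(\W_{r_1}\otimes\W_{r_2},\,\bigoplus_b K\Sg_{b,r-b})$-bimodules between $\W_r\epsilon$ and the bimodule implementing the composite of the two product equivalences with symmetric-group induction; establishing it requires a Mackey/double-coset analysis of $\W_{r_1}\times\W_{r_2}\backslash \W_r$ against the idempotents $e_{b,r-b}$, and this is precisely the nontrivial content of the lemma --- everything after it is formal. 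Until that step is supplied (or replaced by a citation to a transitivity-of-Morita-equivalence statement in \cite{DJ92} or \cite{DM02}), the argument is a correct strategy rather than a complete proof. One further caution: over a field of positive characteristic the symmetric-group Littlewood--Richardson rule yields a dual Specht \emph{filtration} with the stated multiplicities, not a direct sum decomposition; your displayed $\bigoplus$ (like the lemma's own statement) should be read in that filtered sense, which is all that is used later in Lemma~\ref{Res l of a cell module has a dual Specht filtration for type C}.
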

\begin{proof}
The proof proceeds similar to the argument [\cite{RY03}, Theorem 4.4].
\end{proof}

\begin{lemma}\label{Res l of a cell module has a dual Specht filtration for type C}
    Let $\mathrm{char}~K \neq 2,3$, and $\delta \neq 0$. For $(m,\banu) \in \Lambda$ with $m \geq l$, we have $\Res_l \ind_m S'(\banu) \in \F_{\W_{r-l}}(S)$.
\end{lemma}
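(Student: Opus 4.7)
The plan is to unpack the restriction functor, identify $\Res_l \ind_m S'(\banu)$ as a direct sum of representations induced from Young subgroups of $\W_{r-l}$, and then invoke Lemma~\ref{LR rule} together with the dual Specht filtration of Young permutation modules. First, unpacking the definitions gives
\[
\Res_l \ind_m S'(\banu) \;\cong\; S'(\banu) \otimes_{\W_{r-m}} e_m(B/J_{m+1}) e_l,
\]
so I would study the $(\W_{r-m}, \W_{r-l})$-bimodule $X := e_m(B/J_{m+1}) e_l$. A basis of $X$ consists of the Brauer diagrams of type $C$ whose top is the $e_m$ partial diagram and whose bottom contains the $e_l$ partial diagram together with $m-l$ further symmetric horizontal edges placed on the free bottom vertices; left multiplication by $\W_{r-m}$ permutes the free top strands, while right multiplication by $\W_{r-l}$ simultaneously permutes these extra edges and the remaining free bottom vertices.

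Next, I would decompose $X$ into sub-bimodules $X_\omega$ indexed by the $\W_{r-l}$-orbits on the set of bottom configurations. Fixing a representative $\omega$ with a canonical identity middle bijection $d_0^{(\omega)}$, a stabilizer analysis analogous to Subsection~\ref{stabilizer of a partial diagram for type C} (and mirroring the argument in the proof of Theorem~\ref{cell filtration of permutation module for type C}) shows that the stabilizer of this basepoint in $\W_{r-m} \times \W_{r-l}$ is the graph of a Young subgroup $\W_{r-m} \times \h_{\omega} \hookrightarrow \W_{r-l}$, with $\h_{\omega} \subseteq \W_{m-l}$ determined by the symmetries of the extra-edge pattern. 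A dimension-matching double-coset argument then identifies
\[
S'(\banu) \otimes_{\W_{r-m}} X_{\omega} \;\cong\; \Ind_{\W_{r-m} \times \h_{\omega}}^{\W_{r-l}}\bigl(S'(\banu) \otimes_K 1_{\h_{\omega}}\bigr),
\]
and transitivity of induction rewrites this as $\Ind_{\W_{r-m} \times \W_{m-l}}^{\W_{r-l}}\bigl(S'(\banu) \otimes_K \Ind_{\h_{\omega}}^{\W_{m-l}} 1_{\h_{\omega}}\bigr)$.

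The inner induction $\Ind_{\h_{\omega}}^{\W_{m-l}} 1_{\h_{\omega}}$ is a Young permutation module of $\W_{m-l}$, which admits a dual Specht filtration by [\cite{Ma}, Corollary 5.11] (also invoked in the proof of Theorem~\ref{cell filtration of permutation module for type C}). Tensoring this filtration with $S'(\banu)$ over $K$ and applying the exact functor $\Ind_{\W_{r-m} \times \W_{m-l}}^{\W_{r-l}}$, Lemma~\ref{LR rule} replaces each piece $S'(\banu) \otimes_K S'(\boalp)$ by a direct sum of dual Specht modules of $\W_{r-l}$. Summing over orbits $\omega$ then yields the required dual Specht filtration of $\Res_l \ind_m S'(\banu)$. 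The main obstacle is the bimodule analysis: carefully enumerating the $\W_{r-l}$-orbits on bottom configurations of $X$ and identifying the associated Young subgroup stabilizers $\W_{r-m} \times \h_{\omega}$. This step is more delicate than the classical Brauer algebra case of \cite{HP} because the $m-l$ extra symmetric horizontal edges split into two geometric types (self-mirror edges crossing the vertical axis, versus mirror-paired edges on each side of the axis), requiring a finer orbit enumeration.
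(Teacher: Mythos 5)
Your proposal is correct and follows essentially the same route as the paper's proof: both unpack $\Res_l \ind_m S'(\banu)$ as $S'(\banu)\otimes_{\W_{r-m}} e_m(B/J_{m+1})e_l$, rewrite it as $\Ind_{\W_{r-m}\times\W_{m-l}}^{\W_{r-l}}$ applied to a Young permutation module of $\W_{m-l}$ tensored with $S'(\banu)$, and then combine the dual Specht filtration from [\cite{Ma}, Corollary 5.11] with Lemma~\ref{LR rule}. The only divergence is that the paper treats $e_m(B/J_{m+1})e_l$ as a single transitive right $\W_{r-l}$-permutation module $\W_{r-l}/\h_{m-l}$, whereas your orbit-by-orbit analysis (distinguishing self-mirror from mirror-paired extra edges) is a more careful rendering of that same step rather than a different method.
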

    \begin{proof}
    Any cell module of $B$ is of the form $\ind_m S'(\bmu)$, for some $(m,\bmu) \in \Lambda$. Hence, $\Res_l \ind_m S'(\banu)= S'(\banu)\otimes_{\W_{r-m}} e_m(B/J_{m+1})e_l$. If $m < l$, then $e_m(B/J_{m+1})e_l=0$. Therefore, we assume $m \geq l$. The vector space $e_m(B/J_{m+1})e_l$ is spanned by the Brauer diagrams of type $C$ with exactly $m$ horizontal edges, such that the top and bottom configuration contain the partial diagram of $e_m$ and $e_l$, respectively. This space is a right $\W_{r-l}$ module isomorphic to the $\W_{r-l}$-permutation module $\W_{r-l}/\h_{m-l}$, where $\h_{m-l}$ is as in Subsection \ref{stabilizer of a partial diagram for type C}. For $(m,\banu) \in \Lambda$, we have 
   \begin{align*}
        \Res_l \ind_m S'(\banu)&= S'(\banu)\otimes_{\W_{r-m}} e_m(B/J_{m+1})e_l &\\
        &= \Ind_{\W_{r-m} \times \W_{m-l}}^{\W_{r-l}} \Big( \Ind_{\h_{m-l}}^{\W_{m-l}} 1_{\h_{m-l}} \otimes_K S'(\banu) \Big)&\\
        &= \Ind_{\W_{r-m} \times \W_{m-l}}^{\W_{r-l}} \Big(M\big((1^{(r-m)}),(m-l)\big) \otimes_K S'(\banu) \Big) &
    \end{align*}
   
    It follows from [\cite{Ma}, Corollary 5.11] that $ M\big((1^{(r-m)}),(m-l)\big)$ has a dual Specht filtration, provided $\ch~K\neq 2,3$. Finally, by applying the characteristic free version of Littlewood-Richardson rule given in Lemma \ref{LR rule}, $\Res_l \ind_m S'(\blam) \in \F_{\W_{r-l}}(S)$.
    \end{proof}   
\begin{corollary}\label{Res l of any module having cell filtration has dual Specht filtration in type C}
    Let $\ch~K \neq 2,3$, and $\delta\neq 0$. If $X \in \F_{\W_{r-m}}(S)$, then $\Res_l \ind_m X \in \F_{\W_{r-l}}(S)$ for $0 \leq l \leq m \leq r$. In particular, if $Y \in \F_B(\Theta)$ then $\Res_l Y \in \F_{\W_{r-l}}(S).$
\end{corollary}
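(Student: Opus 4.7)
My plan is to derive both claims from the previous lemma by a filtration-refinement argument, using only that both $\ind_m$ and $\Res_l$ are exact functors. To establish the first assertion, I would proceed by induction on the length of a dual Specht filtration of $X$. The base case, when $X \cong S'(\banu)$ is itself a dual Specht module, is precisely Lemma \ref{Res l of a cell module has a dual Specht filtration for type C}.

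For the inductive step, I would fix a short exact sequence $0 \to X' \to X \to S'(\banu) \to 0$ of right $\W_{r-m}$-modules, where $X'$ admits a dual Specht filtration of strictly smaller length. The functor $\ind_m$ is exact by the split-pair construction, and $\Res_l = -\otimes_B Be_l$ is exact since $Be_l$ is a projective summand of $B$; applying the composition yields
\[
0 \to \Res_l \ind_m X' \to \Res_l \ind_m X \to \Res_l \ind_m S'(\banu) \to 0.
\]
By the inductive hypothesis the left-hand term lies in $\F_{\W_{r-l}}(S)$, and by the base case so does the right-hand term. The key observation, which I would state and verify explicitly, is that $\F_{\W_{r-l}}(S)$ is closed under extensions: given any short exact sequence $0 \to A \to M \to D \to 0$ with $A, D \in \F_{\W_{r-l}}(S)$, one pulls back the filtration of $D$ to a filtration of $M$ descending to $A$, and then concatenates with the filtration of $A$ to obtain a dual Specht filtration of $M$. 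This closure property, applied to the displayed sequence, places $\Res_l \ind_m X$ in $\F_{\W_{r-l}}(S)$ and closes the induction.

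For the ``in particular'' statement, I would fix a cell filtration $Y = Y_0 \supset Y_1 \supset \cdots \supset Y_k = 0$ of $Y$ with subquotients $Y_i/Y_{i+1} \cong \ind_{m_i} S'(\bmu_i)$. Applying the exact functor $\Res_l$ produces a filtration of $\Res_l Y$ whose successive quotients are $\Res_l \ind_{m_i} S'(\bmu_i)$, and each of these lies in $\F_{\W_{r-l}}(S)$ by Lemma \ref{Res l of a cell module has a dual Specht filtration for type C}. Refining each step by the internal dual Specht filtration of its quotient yields the desired dual Specht filtration of $\Res_l Y$. I do not anticipate any serious obstacle; the substantive content (the bimodule identification of $e_m(B/J_{m+1})e_l$, the Littlewood--Richardson input from Lemma \ref{LR rule}, and the dual Specht filtration of $M\big((1^{r-m}),(m-l)\big)$) has been absorbed into the previous lemma, and the present corollary is a purely formal consequence of exactness together with refinement of filtrations.
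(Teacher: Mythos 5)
Your proposal is correct and follows essentially the same route as the paper: both arguments push a filtration of $X$ (respectively of $Y$) through the exact functors $\ind_m$ and $\Res_l$, identify each subquotient as $\Res_l\ind_m S'(\banu)$, invoke Lemma \ref{Res l of a cell module has a dual Specht filtration for type C}, and conclude by closure of $\F_{\W_{r-l}}(S)$ under extensions. Your explicit induction on filtration length is just a more detailed write-up of the paper's refinement argument.
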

\begin{proof}
     Since the functor $\ind_m$ is exact and sends dual Specht modules to cell modules, it follows that $\ind_m X$ has a cell filtration. The exactness of $\Res_l$ then ensures that $\Res_l \ind_m X$ admits a filtration of $\W_{r-l}$ modules, where each subquotient is isomorphic to $\Res_l \ind_m S'(\banu)$. By Lemma \ref{Res l of a cell module has a dual Specht filtration for type C}, each such subquotient lies in $ \F_{\W_{r-l}}(S)$. Since $\F_{\W_{r-l}}(S)$ is closed under extensions, we conclude that $\Res_l\ind_m X \in \F_{\W_{r-l}}(S)$.
\end{proof}

\begin{theorem}\label{exactness of Hom for type C}
  Let $\mathrm{char}~K \neq 2,3$, and $\delta \neq 0$. For $(l,\blam) \in \Lambda$, the functor $\Hom_{B} \big( M(l,\blam),- \big)$ is exact in $\F_B(\Theta)$. 
\end{theorem}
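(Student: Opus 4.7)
The plan is to reduce the assertion, via the tensor-hom adjunction between $\Ind_l$ and $\Res_l$, to a purely hyperoctahedral $\Ext$-vanishing statement. Since $M(l,\blam)=\Ind_l M(\blam)=M(\blam)\otimes_{\W_{r-l}}e_lB$, one has a natural isomorphism of functors
\[
\Hom_B\bigl(M(l,\blam),-\bigr)\;\cong\;\Hom_{\W_{r-l}}\bigl(M(\blam),\Res_l(-)\bigr).
\]
The functor $\Res_l=(-)e_l$ is exact, and by Corollary \ref{Res l of any module having cell filtration has dual Specht filtration in type C} it carries $\F_B(\Theta)$ into $\F_{\W_{r-l}}(S)$, so it suffices to show that $\Hom_{\W_{r-l}}(M(\blam),-)$ is exact on $\F_{\W_{r-l}}(S)$. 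A standard d\'evissage along a dual Specht filtration makes this equivalent to proving
\[
\Ext^1_{\W_{r-l}}\bigl(M(\blam),S'(\bmu)\bigr)=0 \qquad\text{for every }\bmu\in\Lam_{r-l}.
\]

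For this vanishing I would apply Shapiro's lemma to $M(\blam)=\Ind_{\W_\blam}^{\W_{r-l}}K$ (group-algebra induction is exact and preserves projectives), obtaining
\[
\Ext^1_{\W_{r-l}}\bigl(M(\blam),S'(\bmu)\bigr)\;\cong\;\Ext^1_{\W_\blam}\bigl(K,\Res_{\W_\blam}S'(\bmu)\bigr).
\]
Using the decomposition $\W_\blam=\prod_i\Sg_{\lambda^1_i}\times\prod_j W_{\lambda^2_j}$ together with the restriction version of Lemma \ref{LR rule} (obtained from the induction formula by Frobenius reciprocity, and combined with the Morita equivalence of Subsection \ref{Morita equivalence for Hecke algebras for type C} to handle the symmetric-group steps), $\Res_{\W_\blam}S'(\bmu)$ admits a filtration whose subquotients are external tensor products $\bigotimes_i S_{\rho_i}\otimes\bigotimes_j S'(\bonu_j)$ of (dual) Specht modules of the individual factors. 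A further d\'evissage reduces the vanishing to each such subquotient, and the K\"unneth formula ([\cite{CE}, Chapter XI, Theorem 3.1]) splits the resulting $\Ext^1$ into summands each containing exactly one $\Ext^1(K,\cdot)$ factor. When $\ch K\neq 2,3$, these factors vanish: for $\Sg_{\lambda^1_i}$-factors by [\cite{HN}, Theorem 3.3.4], and for $W_{\lambda^2_j}$-factors by Theorem \ref{Ext between trivial and dual Specht module is zero for hyperoctahedral group}. Hence every K\"unneth summand is zero and the claim follows.

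The main technical step is constructing the filtration of $\Res_{\W_\blam}S'(\bmu)$ by external tensor products of dual Specht modules used just above; it is precisely at this branching rule that the hyperoctahedral argument goes beyond the purely symmetric-group computation of \cite{HN}, and where combining Frobenius reciprocity with the Dipper--James Morita equivalence of \cite{DJ92} is essential.
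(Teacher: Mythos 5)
Your argument is essentially the paper's own proof: both reduce, via the adjunction between $\Ind_l$ and $\Res_l$ and the fact that $\Res_l$ carries $\F_B(\Theta)$ into $\F_{\W_{r-l}}(S)$, to the vanishing of $\Ext^1_{\W_{r-l}}(M(\blam),-)$ on dual Specht modules, and both then invoke Eckmann--Shapiro, the branching of dual Specht modules to the Young subgroup $\W_\blam$, the K\"unneth formula, and the vanishing results of Hemmer--Nakano together with Theorem \ref{Ext between trivial and dual Specht module is zero for hyperoctahedral group}. Your phrasing of the branching step (a filtration by external tensor products of dual Specht modules, followed by a further d\'evissage) is in fact slightly more careful than the paper's, which asserts the restriction \emph{is} such a tensor product, but the substance is identical.
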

\begin{proof}
    Consider a short exact sequence 
\begin{equation} \label{Hom functor is exact sequence 1 for type C}
   \begin{tikzcd}
        0 \arrow[r] &C \arrow[r] & D \arrow[r] &E \arrow[r] &0
    \end{tikzcd}
\end{equation}   
   in $\F_B(\Theta)$, where $C, D, E \in \mathcal{F}_B(\Theta)$. Applying the exact functor $\Res_l$, followed by the covariant functor $\Hom_{\W_{r-l}} (M(\blam), -)$, we obtain the long exact sequence 
\begin{equation}\label{Hom functor is exact sequence 2 for type C }
 \begin{tikzcd}
        0 \arrow[r]  &\Hom_{\W_{r-l}} (M(\blam), Ce_l) \arrow[r]  &\Hom_{\W_{r-l}}(M(\blam), De_l)
        \ar[draw=none]{d}[name=X, anchor=center]{}
     \ar[rounded corners,
            to path={ -- ([xshift=2ex]\tikztostart.east)
                      |- (X.center) \tikztonodes
                      -| ([xshift=-2ex]\tikztotarget.west)
                      -- (\tikztotarget)}]{dll}[at end]{} \\     
         \Hom_{\W_{r-l}} (M(\blam), Ee_l) \arrow[r] & 
        \Ext_{\W_{r-l}}^1 (M(\blam), Ce_l) \arrow[r] &\cdots.
    \end{tikzcd}
\end{equation}  
By  Corollary \ref{Res l of any module having cell filtration has dual Specht filtration in type C}, we have $C e_l \in \F_{\W_{r-l}}(S)$. To conclude exactness, it suffices to show that $\Ext_{\W_{r-l}}^1 (M(\blam), Ce_l)=0$.
 %
 %
%
To show this, we again use Corollary \ref{Res l of any module having cell filtration has dual Specht filtration in type C} and Lemma \ref{Res l of a cell module has a dual Specht filtration for type C}. It is enough to prove that 
\begin{equation*}
    \Ext_{\W_{r-l}}^1 (M(\blam), \Res_l \ind_m S'(\banu))=0. 
\end{equation*}
Applying the Eckmann-Shapiro Lemma [\cite{CE}, Chapter XI, Theorem 3.1], we get
\begin{align*}
     \Ext_{\W_{r-l}}^1 (M(\blam), \Res_l \ind_m S'(\banu)) &\cong  \Ext_{\W_{\blam}}^1 (1_{\W_{\blam}}, \Res_{\W_{\blam}}^{\W_r}\big(\Res_l \ind_m S'(\banu)) \big)\\
     &\cong \Ext_{\W_{\blam}}^1 (1_{\W_{\blam}}, \big(S_{\lambda_{1}^{1}} \otimes \cdots \otimes  S_{\lambda_{k_1}^{1}} \big)\otimes \big(S'(\lambda_{1}^{2}) \otimes \cdots \otimes  S'(\lambda_{k_2}^{2})\big)).
\end{align*}
The last isomorphism follows from the fact that restriction of a dual Specht module from $\W_r$ to $\W_{\bolam}$ is the tensor product of dual Specht modules. If $\mathrm{char}~K \neq 2,3$, by Theorem \ref{Ext between trivial and dual Specht module is zero for hyperoctahedral group} and [\cite{HN}, Proposition 4.1.1], we have 
\[
\Ext_{K\Sg_{\lambda_{i}^{1}}}^1 (1_{K\Sg_{\lambda_{i}^{1}}}, S_{\lambda_i^{1}})=0 \text{ and } \Ext_{\W_{\lambda_{j}^{2}}}^1 (1_{\W_{\lambda_{j}^{2}}}, S'(\lambda_j^{2}))=0, \text{ for } i=1, \cdots k_1; j=1, \cdots, k_2.
\]
Hence, by [\cite{CE}, Chapter XI, Theorem 3.1], it follows that $  \Ext_{\W_{r-l}}^1 (M(\blam), \Res_l \ind_m S'(\banu))=0, $ as desired. 
\end{proof}
A $B$-module $M$ is said to be \textit{relative projective} in $\F_B(\Theta)$ if $\Ext_B^1(M, X)=0$, for all $X \in \F_B(\Theta)$.
\begin{corollary} \label{relative projective of the permutation module and its direct summand for type C}
    Let $\mathrm{char}~K \neq 2,3$, and $\delta \neq 0$. For $(l,\blam)\in \Lambda$, the permutation module $M(l,\blam)$ and all its direct summands are relative projective in $\F_B(\Theta).$ In particular, $Y(l,\blam)$ is relative projective in $\F_B(\Theta)$. 
\end{corollary}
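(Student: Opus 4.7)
The plan is to prove the vanishing $\Ext^1_B(M(l,\blam), X) = 0$ for every $X \in \F_B(\Theta)$ by combining Theorem~\ref{exactness of Hom for type C} with the Yoneda description of $\Ext^1$ via short exact sequences. The two ingredients I will use are that $M(l,\blam) \in \F_B(\Theta)$ by Theorem~\ref{cell filtration of permutation module for type C}, and that $\Hom_B(M(l,\blam),-)$ is exact on $\F_B(\Theta)$ by Theorem~\ref{exactness of Hom for type C}.

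Given $X \in \F_B(\Theta)$, I would represent any class of $\Ext^1_B(M(l,\blam), X)$ by a short exact sequence
\begin{equation*}
0 \longrightarrow X \longrightarrow E \longrightarrow M(l,\blam) \longrightarrow 0.
\end{equation*}
First I would observe that $E$ also admits a cell filtration: concatenating a cell filtration of $X$ below the pullback along $E \twoheadrightarrow M(l,\blam)$ of a cell filtration of $M(l,\blam)$ produces a cell filtration of $E$, and so $E \in \F_B(\Theta)$. Applying Theorem~\ref{exactness of Hom for type C} to this sequence then yields the short exact sequence
\begin{equation*}
0 \longrightarrow \Hom_B(M(l,\blam), X) \longrightarrow \Hom_B(M(l,\blam), E) \longrightarrow \Hom_B(M(l,\blam), M(l,\blam)) \longrightarrow 0.
\end{equation*}
Lifting the identity $1_{M(l,\blam)}$ through the surjection on the right provides a splitting $M(l,\blam) \to E$ of the original sequence; hence the Ext class is zero. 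Therefore $\Ext^1_B(M(l,\blam), X) = 0$, so $M(l,\blam)$ is relative projective in $\F_B(\Theta)$.

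For an arbitrary direct summand $Y$ of $M(l,\blam)$, write $M(l,\blam) = Y \oplus Y'$. Additivity of $\Ext^1$ in the first variable then yields $\Ext^1_B(Y, X) \oplus \Ext^1_B(Y', X) \cong \Ext^1_B(M(l,\blam), X) = 0$, so $\Ext^1_B(Y, X) = 0$ for every $X \in \F_B(\Theta)$. Since $Y(l,\blam)$ is by construction a direct summand of $M(l,\blam)$ (Theorem~\ref{existence and uniqueness of Young module of type C}), it is in particular relative projective in $\F_B(\Theta)$.

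No serious obstacle is expected: the only point requiring care is the closure of $\F_B(\Theta)$ under extensions, and this follows directly from the definition by concatenating cell filtrations as described above. The substantive work has already been carried out in Theorem~\ref{exactness of Hom for type C}, which is what allows the identity map to lift and force the splitting.
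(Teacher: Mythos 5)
Your proof is correct and follows essentially the same route as the paper: apply the exactness of $\Hom_B(M(l,\blam),-)$ on $\F_B(\Theta)$ (Theorem \ref{exactness of Hom for type C}) to a short exact sequence ending in $M(l,\blam)$, lift the identity to obtain a splitting, and pass to direct summands. Your version is in fact slightly more careful than the paper's in making explicit that the middle term of the extension lies in $\F_B(\Theta)$ (closure under extensions), which is needed before the theorem can be applied.
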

\begin{proof}
Consider the short exact sequence in (\ref{Hom functor is exact sequence 1 for type C}) and $E=\Ind_l M(\blam)$. By Theorem \ref{exactness of Hom for type C}, applying the exact functor $\Hom_B(\Ind_l M(\blam),-)$ yields the exact sequence
\begin{equation}\label{Hom functor M(l,blam) is exact sequence 4 for type C}
 \begin{tikzcd}
        0 \arrow[r]  &\Hom_{B} (\Ind_lM(\blam), C) \arrow[r]  &\Hom_{B}(\Ind_lM(\blam), D) \arrow[r, "f"] &\End_{B} (\Ind_l M(\blam))\arrow[r]  &0.
 \end{tikzcd}      
\end{equation}
Since the map $f$ in (\ref{Hom functor M(l,blam) is exact sequence 4 for type C}) is surjective, there exists a homomorphism $\beta: \Ind_l M(\blam) \longrightarrow D$ such that the sequence in (\ref{Hom functor M(l,blam) is exact sequence 4 for type C}) is split exact. This implies $\Ext_B^1(\Ind_l M(\blam), C)=0$, for all $C \in \F_B(\Theta)$.

Let $E$ be any direct summand of $M(l,\blam)$.  Using similar arguments, we conclude that $E$ is also relative projective in $\F_B(\Theta)$. 
\end{proof}


\section{Main Theorem} \label{Main theorem for type C}

This section aims to decompose permutation modules of type C Brauer algebras into direct sums of indecomposable Young modules, following the approach of \cite{HP} for the classical Brauer algebra.
 
\begin{theorem}\label{Decomposition of the permutation module for Brauer algebras of type C for type C}
     Let $\mathrm{char}~K \neq 2,3$, and $\delta \neq 0$. For any $(l,\blam) \in \Lambda$, the permutation module $M(l,\blam)$ of $\B(C_r, \delta)$ decomposes as a direct sum of indecomposable Young modules:
     \[
     M(l, \blam) \cong Y(l,\blam) \oplus \big(\bigoplus_{\substack{(l,\blam) \geq (m, \bmu)\\ (m,\bmu) \in \Lambda}} Y(m,\bmu)^{ a_{(m,\bmu)}}\big),
     \]
     where 
     \begin{itemize}
         \item[1.] The Young module $Y(l,\blam)$ appears with multiplicity one.  
         \item[2.] The other direct summands of $M(l,\blam)$ are indexed by $(m, \bmu) \in \Lambda$ such that $(l,\blam) \geq (m, \bmu)$.     
         \item[3.] The multiplicity of $Y(m,\bmu) $ in $M(l,\blam)$ is given by $a_{(m,\bmu)}$, a non-zero integer. 
     \end{itemize}
\end{theorem}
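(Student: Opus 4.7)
The plan is to combine the Krull--Schmidt theorem with the existence and uniqueness of $Y(l,\blam)$ from Theorem~\ref{existence and uniqueness of Young module of type C}, the multiplicity constraints in Proposition~\ref{Characterization of Young module in type C}, and the cell-filtration together with the relative-projectivity results from Theorem~\ref{cell filtration of permutation module for type C} and Corollary~\ref{relative projective of the permutation module and its direct summand for type C}, all organised by the stratifying system of Proposition~\ref{stratifying system of brauer algebra of type C}. The argument proceeds by strong induction on the partial order on $\Lambda$, whose minimal element sits at the deepest layer $l = r$, where the permutation module coincides with its unique Young summand.

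First I would apply Krull--Schmidt to the finite-dimensional $B$-module $M(l,\blam)$ to obtain an essentially unique decomposition $M(l,\blam) = \bigoplus_{i} Z_i$ into indecomposables. By Theorem~\ref{cell filtration of permutation module for type C} each $Z_i$ lies in $\F_B(\Theta)$, and by Corollary~\ref{relative projective of the permutation module and its direct summand for type C} each $Z_i$ is relatively projective in $\F_B(\Theta)$. For the inductive step, Theorem~\ref{Decomposition of the permutation module of hyperoctahedral group} yields
\[
M(l,\blam) = \Ind_l M(\blam) = \Ind_l Y(\blam) \oplus \bigoplus_{\bmu \triangleright \blam} \Ind_l Y(\bmu)^{c_{\bmu}}.
\]
Each $\Ind_l Y(\bmu)$ with $\bmu \triangleright \blam$ is a direct summand of $M(l,\bmu)$, and since $(l,\bmu) < (l,\blam)$ in the partial order, the inductive hypothesis together with Krull--Schmidt shows that this piece decomposes into Young modules $Y(m,\banu)$ of $B$ with $(l,\bmu) \geq (m,\banu)$, and hence $(l,\blam) > (m,\banu)$ by transitivity. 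For the remaining factor $\Ind_l Y(\blam)$, Theorem~\ref{existence and uniqueness of Young module of type C} extracts $Y(l,\blam)$ as a direct summand; the residual indecomposable summands are then identified as Young modules $Y(m,\banu)$ of $B$ through the stratifying system, since every indecomposable relative projective in $\F_B(\Theta)$ arises this way.

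Finally, Proposition~\ref{Characterization of Young module in type C}(1) fixes the multiplicity of $Y(l,\blam)$ at one, giving part~(1); parts~(2) and~(3) of Proposition~\ref{Characterization of Young module in type C} together force the only remaining Young summands to be those indexed by $(m,\bmu) \in \Lambda$ with $(l,\blam) > (m,\bmu)$, giving part~(2); and the multiplicities $a_{(m,\bmu)}$ are positive integers by the Krull--Schmidt uniqueness, with the sum indexed only by pairs that actually appear, giving part~(3). The main obstacle is the identification step inside the inductive argument: one must rigorously match every indecomposable relatively projective summand of $M(l,\blam)$ with a Young module in the sense of Section~\ref{Construction of Young module for type C}. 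This amounts to aligning the concrete construction of $Y(m,\banu)$ given in Section~\ref{Construction of Young module for type C} with the abstract module produced by the Erdmann--S\'aenz machinery, which in turn relies crucially on the cohomological vanishing of Lemmas~\ref{Hom between two cell modules vanishing condition for type C} and~\ref{Ext between two cell modules vanishing condition for type C} and on the well-definedness of cell filtration multiplicities from Corollary~\ref{cell-filtration multiplicity are well-dependent for type C} to pin down a bijection between indecomposable relative projectives in $\F_B(\Theta)$ and the index set $\Lambda$.
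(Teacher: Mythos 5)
Your strategy (Krull--Schmidt plus strong induction on the order on $\Lambda$) is genuinely different from the paper's. The paper does not induct at all: it passes through the contravariant equivalence $\F_B(\Theta)\simeq\F_A(\Delta)$ with the standardly stratified algebra $A=\End_B\bigl(\bigoplus P(l,\blam)\bigr)$ supplied by the stratifying system of Proposition \ref{stratifying system of brauer algebra of type C}, uses the exactness of $\Hom_B(M(l,\blam),-)$ on $\F_B(\Theta)$ (Theorem \ref{exactness of Hom for type C}) together with [\cite{DR}, Theorem 1] to conclude that the image of $M(l,\blam)^*$ is a projective $A$-module, and hence that $M(l,\blam)^*$ decomposes into the indecomposable Ext-projectives $P(r,\banu)$, which are then matched with Young modules via Proposition \ref{Characterization of Young module in type C}. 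Your route would, if completed, be somewhat more elementary, but it is not completed.

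The genuine gap is exactly the step you yourself flag as ``the main obstacle'' and then leave unresolved: identifying the residual indecomposable summands of $\Ind_l Y(\blam)$ (those other than $Y(l,\blam)$) as Young modules. Your inductive hypothesis only reaches the summands of $\Ind_l Y(\bmu)$ for $\bmu\triangleright\blam$; the leftover summands of $\Ind_l Y(\blam)$ live at the same index $(l,\blam)$ or at deeper layers $m>l$ and are untouched by the induction, so Krull--Schmidt alone gives you anonymous indecomposables. Your assertion that ``every indecomposable relative projective in $\F_B(\Theta)$ arises as a Young module'' is precisely the content that must be proved --- it is established nowhere before Theorem \ref{Decomposition of the permutation module for Brauer algebras of type C for type C}. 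It can be closed by the counting argument underlying the paper's proof: by [\cite{ES}] the indecomposable Ext-projectives in $\F_B(\Theta)$ are, up to isomorphism, exactly the $|\Lambda|$ modules $P(l,\blam)$; each $Y(l,\blam)$ is an indecomposable Ext-projective (Corollary \ref{relative projective of the permutation module and its direct summand for type C}) and the $Y(l,\blam)$ are pairwise non-isomorphic (Proposition \ref{Characterization of Young module in type C}(4)), so the $|\Lambda|$ Young modules exhaust the $P(l,\blam)$ and every indecomposable summand of $M(l,\blam)$ is a Young module. Without this (or the paper's route through the projectivity of $\mathcal{M}(l,\blam)^*$ over $A$), the proof is incomplete; with it, your induction becomes largely superfluous, since the identification already yields the decomposition directly.
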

\begin{proof}
    The cell modules $\Theta(l,\blam)$ of $B$ satisfy the two conditions given in Lemma \ref{Hom between two cell modules vanishing condition for type C} and Lemma \ref{Ext between two cell modules vanishing condition for type C}, and  by Theorem \ref{existence of stratifying system} there exists indecomposable $B$-modules $\{P(l,\blam): (l,\blam) \in \Lambda\}$ such that $$\{\big(\Theta(l,\blam), P(l,\blam)\big)\}_{(l,\blam) \in \Lambda}$$ forms a stratifying system by Proposition \ref{stratifying system of brauer algebra of type C}. It follows from [\cite{ES}, Proposition 1.3] that there exists a standardly stratified algebra (infact quasi-hereditary algebra) $A=\End_{B}(  \underset{(l,\blam)}{\bigoplus} P(l,\blam))$, where the standard module is given by $\Delta(l,\blam)=\Hom_B(\Theta(l,\blam), \underset{(l,\blam)}{\bigoplus} P(l,\blam))$ with respect to the ordering $(\Lambda, \leq^{\mathrm{op}})$, where $ \leq^{\mathrm{op}}$ is the opposite of the natural order. As a result, it follows from [\cite{ES}, Theorem 1.6] that the category $\F_{A}(\Delta)$ of $A$-modules having a filtration by $\Delta(l,\blam)$ is contravariantly equivalent to $\F_B(\Theta)$. 
    
    Since $M(l,\blam)$ has a cell filtration by Theorem \ref{cell filtration of permutation module for type C}, and $\Hom_B(M(l,\blam),-)$ is exact in $\F_B(\Theta)$ by Theorem \ref{exactness of Hom for type C}; it follows that $\Hom_B(-, \mathcal{M}(l,\blam))$ is exact in $\F_A(\Delta)$, where $\mathcal{M}(l,\blam)$ denotes the image of $M(l,\blam)$ under the equivalence. Since $A$ is standardly stratified and $\Hom_B(-, \mathcal{M}(l,\blam))$ is exact in $\F_A(\Delta)$, it follows from [\cite{DR}, Theorem 1] that $\mathcal{M}(l,\blam)^*$ is projective. Thus, we have a decomposition
    \[
    \mathcal{M}(l,\blam)^*= \bigoplus_{(r,\banu)\in \Lambda} \mathcal{P}(r,\banu)^{a_{(r,\banu)}},  
    \]
    where $\mathcal{P}(r,\banu)$ are indecomposable projective $A$-modules. Under this equivalence, this corresponds to
    \[
    M(l,\blam)^*= \bigoplus_{(r,\banu)\in \Lambda} P(r,\banu)^{ a_{(r,\banu)}}.
    \]
    Since $Y(m,\bmu)^*$ is an indecomposable summand of $M(l,\blam)^*$, there exists $(r,\banu) \in \Lambda$ such that $Y(m,\bmu)^* \cong P(r,\banu)$. By Proposition \ref{Characterization of Young module in type C}, we conclude that
    \[
    M(l,\blam)= \bigoplus_{(l,\blam)\geq (m,\bmu)} Y(m,\bmu)^{ a_{(m,\bmu)}}.
    \]
    This completes the proof.
\end{proof}
\begin{remark}
The results established in this article naturally lead to the definition of a new Schur algebra associted with $\B(C_r,\delta)$. We define the \textit{new Schur algebra} corresponding to $\B(C_r,\delta)$, denoted by $\mathcal{S}(\B(C_r,\delta))$, as 
\[
\mathcal{S}(\B(C_r,\delta)):=\End_{\B(C_r,\delta)}(Y),
\]
where $Y=\bigoplus_{(m,\bmu)\in \Lambda} Y(m,\bmu)$ is the direct sum of all Young modules $Y(m,\bmu)$ of $\B(C_r,\delta)$. By [\cite{HHKP}, Theorem 13.1], there is a Schur-Weyl duality between $\mathcal{S}(\B(C_r,\delta))$ and $\B(C_r,\delta)$. In particular, 
\[
\End_{\End_{\B(C_r,\delta)}(Y)}(Y)=\B(C_r,\delta).
\]
This duality further will help in the structural understanding of representation theory of $\B(C_r,\delta)$ and will offer a natural framework for exploring its connection with Schur algebras.
\end{remark}

\textbf{Acknowledgements} 

The first author's research is supported by IISER-Thiruvananthapuram PhD fellowship. The seconds author's research was partially supported by IISER-Thiruvananthapuram, SERB-Power Grant SPG/2021/004200 and Prof. Steffen Koenig's research grant. The second author also would like to acknowledge the Alexander von Humboldt Foundation for their support.

 \bibliographystyle{abbrv}
 \bibliography{Bibliography.bib}
\end{document}